\newcolumntype{L}[1]{>{\raggedright\let\newline\\\arraybackslash\hspace{0pt}}m{#1}}
\newcolumntype{C}[1]{>{\centering\let\newline  \\\arraybackslash\hspace{0pt}}m{#1}}
\newcolumntype{R}[1]{>{\raggedleft\let\newline \\\arraybackslash\hspace{0pt}}m{#1}}
\def\O{\Omega}
\def\l{\lambda}
\def\tZ{\breve{Z}}
\def\svt{\text{SVT}}
\newcommand{\R}{\mathbb{R}}
\newcommand{\SO}[1]{P_{\Omega}(#1) }
\newcommand{\SOO}[1]{P_{\Omega^\bot}(#1)}
\newcommand{\FN}[1]{\left\| #1 \right\|_F}
\newcommand{\NM}[2]{\| #1 \|_{#2}}
\newcommand{\Diag}[1]{\text{Diag}(#1)}
\newcommand{\prox}[2]{\text{prox}_{#1}(#2)}
\newcommand{\vect}[1]{{\boldsymbol{\mathbf{#1}}}}
\newcommand{\ten}[1]{{\vect{\mathscr{{#1}}}}}
\newcommand{\tX}{\ten{X}}
\newcommand{\tY}{\ten{Y}}
\newcommand{\ip}[1]{{\left\langle  #1 \right\rangle}}
\newcommand{\Span}[1]{\text{span}(#1)}
\newcommand{\tr}[1]{\,\text{tr}(#1)}
\newtheorem{theorem}{Theorem}[section]
\newtheorem{lemma}[theorem]{Lemma}
\newtheorem{proposition}[theorem]{Proposition}
\newtheorem{definition}{Definition}[section]
\begin{document}
%
% paper title
% Titles are generally capitalized except for words such as a, an, and, as,
% at, but, by, for, in, nor, of, on, or, the, to and up, which are usually
% not capitalized unless they are the first or last word of the title.
% Linebreaks \\ can be used within to get better formatting as desired.
% Do not put math or special symbols in the title.
\title{Accelerated and Inexact Soft-Impute for Large-Scale Matrix and Tensor Completion}
%
%
% author names and IEEE memberships
% note positions of commas and nonbreaking spaces ( ~ ) LaTeX will not break
% a structure at a ~ so this keeps an author's name from being broken across
% two lines.
% use \thanks{} to gain access to the first footnote area
% a separate \thanks must be used for each paragraph as LaTeX2e's \thanks
% was not built to handle multiple paragraphs
%

\author{
Quanming Yao \IEEEmembership{Member,~IEEE} and James T. Kwok \IEEEmembership{Fellow,~IEEE}% <-this % stops a space
\IEEEcompsocitemizethanks{
\IEEEcompsocthanksitem Q. Yao is with 4Paradigm Inc, Beijing, China. 
E-mail: yaoquanming@4Paradigm.com
\IEEEcompsocthanksitem James T. Kwok is with the Department of Computer Science and Engineering,
Hong Kong University of Science and Technology, Clear Water Bay,
Hong Kong.
% note need leading \protect in front of \\ to get a newline within \thanks as
% \\ is fragile and will error, could use \hfil\break instead.
E-mail: jamesk@cse.ust.hk. }% <-this % stops an unwanted space
%\thanks{Manuscript received April 19, 2005; revised August 26, 2015.}
}

% note the % following the last \IEEEmembership and also \thanks - 
% these prevent an unwanted space from occurring between the last author name
% and the end of the author line. i.e., if you had this:
% 
% \author{....lastname \thanks{...} \thanks{...} }
%                     ^------------^------------^----Do not want these spaces!
%
% a space would be appended to the last name and could cause every name on that
% line to be shifted left slightly. This is one of those "LaTeX things". For
% instance, "\textbf{A} \textbf{B}" will typeset as "A B" not "AB". To get
% "AB" then you have to do: "\textbf{A}\textbf{B}"
% \thanks is no different in this regard, so shield the last } of each \thanks
% that ends a line with a % and do not let a space in before the next \thanks.
% Spaces after \IEEEmembership other than the last one are OK (and needed) as
% you are supposed to have spaces between the names. For what it is worth,
% this is a minor point as most people would not even notice if the said evil
% space somehow managed to creep in.

% The paper headers
\markboth{IEEE Transactions on Knowledge and Data Engineering}%
{Shell \MakeLowercase{\textit{et al.}}: Bare Demo of IEEEtran.cls for IEEE Journals}
% The only time the second header will appear is for the odd numbered pages
% after the title page when using the twoside option.
% 
% *** Note that you probably will NOT want to include the author's ***
% *** name in the headers of peer review papers.                   ***
% You can use \ifCLASSOPTIONpeerreview for conditional compilation here if
% you desire.

% If you want to put a publisher's ID mark on the page you can do it like
% this:
%\IEEEpubid{0000--0000/00\$00.00~\copyright~2015 IEEE}
% Remember, if you use this you must call \IEEEpubidadjcol in the second
% column for its text to clear the IEEEpubid mark.

% use for special paper notices
%\IEEEspecialpapernotice{(Invited Paper)}

\IEEEtitleabstractindextext{
% As a general rule, do not put math, special symbols or citations in the abstract or keywords.
\begin{abstract}
Matrix and tensor completion 
aim to recover a low-rank matrix / tensor from limited observations and have been
commonly used in 
applications
such as recommender systems and multi-relational data mining.
A state-of-the-art matrix completion algorithm is Soft-Impute, which exploits the
special ``sparse plus low-rank" structure of the matrix iterates to allow efficient SVD in each iteration.
Though Soft-Impute is a proximal algorithm,
it is generally believed that acceleration
destroys the special structure and is thus not useful.
In this paper, 
we show that Soft-Impute can indeed be accelerated without comprising this structure.
To further reduce the iteration time complexity,
we propose an approximate singular value thresholding scheme based on the power method.
Theoretical analysis shows that the proposed algorithm still enjoys the fast $O(1/T^2)$
convergence rate of accelerated proximal algorithms.
We also extend the proposed algorithm to tensor completion
with the scaled latent nuclear norm regularizer.
We show that a similar ``sparse plus low-rank'' structure also exists, leading to low iteration complexity 
and fast $O(1/T^2)$ convergence rate.
Besides, 
the proposed algorithm can be further extended to 
nonconvex low-rank regularizers,
which have better empirical performance than the convex nuclear norm regularizer.
Extensive experiments demonstrate that the proposed algorithm is much faster than Soft-Impute and other state-of-the-art matrix and tensor completion algorithms.
\end{abstract}

% Note that keywords are not normally used for peerreview papers.
\begin{IEEEkeywords}
	Matrix Completion, Tensor Completion, Collaborative Filtering, Link Prediction, Proximal Algorithms
\end{IEEEkeywords}
}

\maketitle

\IEEEpeerreviewmaketitle

% ----------------------------------------------------------------------
% Introduction
% ----------------------------------------------------------------------
\section{Introduction}

\IEEEPARstart{M}{atrices}
are common place in data mining applications.
For example,
in recommender systems,
the ratings data 
can be represented as a sparsely observed
user-item
matrix \cite{koren2008factorization}.
In social networks, user interactions can be modeled by an adjacency matrix \cite{kim2011,chiang2014prediction}.  
Matrices also appear in applications such as image processing \cite{liu2013tensor,gu2017weighted,lu2016nonconvex},
question answering \cite{zhao2015expert} and large scale classification \cite{fan2018accelerated}.

Due to limited feedback from users,
these matrices are usually not fully observed.
%and there can be lots of missing values.
For example, 
users may only 
give opinions on 
very few items
in a recommender system.
As the rows/columns are usually related to each other, the
low-rank matrix assumption is particularly useful to 
capture such relatedness, and
low-rank matrix completion has become a powerful tool 
to predict missing values in these matrices.
Sound recovery guarantee \cite{candes2009exact} and good empirical performance
\cite{koren2008factorization} have been obtained.

However, directly minimizing the matrix rank 
is NP-hard \cite{candes2009exact}.
To alleviate this problem,
the nuclear norm (which is the sum of singular values)
is often used instead.
It is known that the nuclear norm is the tightest convex lower bound of the rank \cite{candes2009exact}.
Specifically,
consider an 
$m \times n$ 
matrix
$O$
(without loss of generality, we assume that $m \ge n$), with
positions of the observed entries indicated by
$\Omega \in \{0,1\}^{m \times n}$, where
$\Omega_{ij}=1$ if $O_{ij}$ is observed, and 0 otherwise.
The matrix completion
tries to find a low-rank matrix $X$ by solving following optimization problem:
\begin{equation} \label{eq:mc}
\min_X
\frac{1}{2}\NM{\SO{X - O}}{F}^2 + \lambda \NM{X}{*},
\end{equation} 
where
$[\SO{A}]_{ij} =
A_{ij}$ if  $\Omega_{ij} = 1$, and
0 otherwise;
and $\NM{\cdot}{*}$ is the nuclear norm.
Though the nuclear norm is only a surrogate of the matrix rank,
there are theoretical guarantees that
the underlying matrix
can be exactly recovered 
\cite{candes2009exact}.

Computationally, 
though the nuclear norm is nonsmooth,
problem~\eqref{eq:mc}
can be solved by various optimization tools.
%with varying degrees of efficiency.
An early attempt is based on reformulating (\ref{eq:mc}) as a semidefinite program (SDP)
\cite{candes2009exact}. However, SDP solvers have large time and space complexities, and 
are only suitable for small data sets. For 
large-scale matrix completion, 
singular value thresholding (SVT) algorithm \cite{cai2010singular} 
pioneered the use of first-order
methods .
However, 
%(accelerated)
a singular value decomposition (SVD) is required
in each SVT iteration.
This takes $O(m n^2)$ time
and
can be computationally expensive.
%on large matrices.
In \cite{toh2010accelerated},
this is reduced to 
a partial SVD by computing only the leading singular values/vectors using PROPACK (a variant
of the Lanczos algorithm) \cite{larsen1998lanczos}.
Another major breakthrough is made by the Soft-Impute algorithm \cite{mazumder2010spectral}, which 
utilizes a special ``sparse plus low-rank'' structure associated with the SVT to efficiently compute the SVD.
Empirically, this allows Soft-Impute to perform matrix completion on the entire
\textit{Netflix} data set.
%with reasonable time.
The SVT algorithm can also be viewed as a proximal algorithm
\cite{tibshirani2010}.
Hence, it converges with a $O(1/T)$ rate, where $T$ is the number of iterations
\cite{beck2009fast}.
Later, this is further ``accelerated", and
the convergence rate is improved to $O(1/T^2)$
\cite{ji2009accelerated,toh2010accelerated}.
However,
Tibshirani
\cite{tibshirani2010} suggested that this is not
useful, as the special
``sparse plus low-rank''
structure crucial to the efficiency of Soft-Impute no longer exist.
In other words, the gain in convergence rate is more than compensated by the increase in
iteration time complexity.

In this paper, 
we 
%first consider the matrix completion problem, and 
show that accelerating
Soft-Impute is indeed possible while still
preserving the ``sparse plus low-rank'' structure.
To further reduce the iteration time complexity,
instead of computing SVT exactly using PROPACK \cite{toh2010accelerated,mazumder2010spectral},
we propose an approximate SVT scheme based on the power method \cite{halko2011finding}.
Though the SVT obtained 
in each iteration
is only approximate, 
we show that convergence can still be as fast as performing exact SVT.
Hence, the resultant algorithm has low iteration complexity and fast $O(1/T^2)$ convergence rate.
To further boost performance,
we 
extend
the post-processing procedure  in
\cite{mazumder2010spectral} to any smooth convex loss function.
The proposed algorithm is also
extended 
for nonconvex low-rank regularizers, such as the truncated nuclear norm \cite{hu2013fast} and log-sum-penalty \cite{candes2009exact}.
	which can give better 
Empirically, these
nonconvex low-rank regularizers have better
performance than the convex nuclear norm regularizer.

Besides matrices,
tensors have also been commonly used to describe the linear and
multilinear relationships in the data \cite{kolda2009tensor,tomioka2010estimation,acar2011scalable,liu2013tensor}.
Analogous to matrix completion,
%several convex low-rank regularizers have been proposed, such as the overlapped nuclear norm regularizer \cite{liu2013tensor,tomioka2010estimation} and the scaled latent nuclear norm regularizer 
tensor completion can also be solved by convex optimization algorithms. 
However,
multiple expensive SVDs on large dense matrices are required \cite{liu2013tensor,tomioka2010estimation}.
%On tensor completion, the problem of existing algorithms is again on SVD.
To alleviate this problem,
we demonstrate that a similar ``sparse plus low-rank'' structure also exists 
when the scaled latent nuclear norm 
\cite{tomioka2010estimation,wimalawarne2014multitask}
is used as the regularizer.
We extend
the proposed matrix-based algorithm
to this tensor scenario. 
The resulting algorithm has low iteration cost and fast $O(1/T^2)$ convergence rate.
Experiments on matrix/tensor completion problems with both synthetic and real-world data sets
show that the proposed algorithm outperforms state-of-the-art algorithms.

%Specifically, the scaled latent nuclear norm \cite{tomioka2010estimation} 
%is used as the convex regularizer encouraging tensor to be low-rank.

Preliminary results of this paper have been reported in a shorter conference version \cite{yao2015accelerated}. 
While only the square loss is used in \cite{yao2015accelerated},
here we consider more general smooth convex loss functions.
Moreover, we extend the proposed algorithm to tensor completion and nonconvex low-rank regularization.
Besides,
post-processing is proposed to boost the recovery performance for matrix/tensor completion
with nuclear norm regularization.
All proofs can be found in Appendix~\ref{app:proof}.

%The rest of the paper is organized as follows. 
%Section~\ref{sec:rel} provides a brief review on the related work.
%The proposed accelerated inexact Soft-Impute algorithm is described in 
%Section~\ref{sec:alg}, and
%its extension to tensor completion in Section~\ref{sec:tensor}.
%Experimental results are presented in 
%Section~\ref{sec:expts}, and  the last section gives some
%concluding remarks.
%All the proofs are in the appendix.

%%%%%%%%%%%%%%%%%%%%%%%%%%%%%%%%%%%%%%%%

\textbf{Notation.}
In the sequel, the transpose of vector/
matrix is denoted by the superscript $\cdot^\top$,
and tensors are 
denoted by boldface Euler.
For a vector $x$,
$\|x\|_1 = \sum_i |x_i|$ is its $\ell_1$-norm, 
and $\|x\| = \sqrt{\sum_i x_i^2}$ its $\ell_2$-norm.
For a matrix $X$,
$\sigma_1(X) \geq \sigma_2(X) \geq \dots \sigma_m(X)$ are  its singular values,
$\tr{X}=\sum_i X_{ii}$ is its trace, 
$\|X\|_1 = \sum_{i,j} |X_{ij}|$,
$\NM{X}{\infty}$ is its maximum singular value,
and $\|X\|_F = \tr{X^{\top}X}$ the Frobenius norm,
$\NM{X}{*} = \sum_i \sigma_i(X)$  
the nuclear norm, and
$\Span{X}$ is the column span of $X$.
Moreover, $I$ denotes the identity matrix. 

For tensors,
we follow the 
notations in \cite{kolda2009tensor}.
For a $D$-order tensor $ \tX \in \R^{I_1\times I_2 \times \cdots \times I_D}$,
its $(i_1, i_2, \dots, i_D)$th entry 
is $x_{i_1i_2\dots i_D}$.
Let $I_{D\backslash d} = \prod_{j= 1, j\neq d}^D I_j$,
the mode-$d$ matricizations 
$\tX_{\left\langle d \right\rangle }$
of $\tX$ 
is a $I_d \times I_{D\backslash d}$ matrix  
with $(\tX_{\left\langle  d \right\rangle })_{i_dj} = x_{i_1i_2\cdots i_D}$, and
$j = 1 + \sum_{l=1,l\neq d}^D(i_l - 1)\prod_{m=1, m\neq d}^{l-1}I_m$. 
Given a matrix $A$, its mode-$d$
tensorization $A^{\left\langle d \right\rangle }$ is a tensor $\tX$ with elements $x_{i_1i_2\cdots i_D} =
a_{i_dj}$, and $j$ is as defined above.
The inner product of two tensors $\tX$ and $\tY$ is $\ip{\tX , \tY} = \sum_{i_1=1}^{I_1}
\dots \sum_{i_D = 1}^{I_D} x_{i_1i_2\dots i_D}y_{i_1i_2\dots i_D}$,
and the Frobenius norm of $\tX$ is $\|\tX\|_F = \sqrt{\langle \tX, \tX \rangle}$.

%For $\ten{A}$,
%$[\SO{\ten{A}}]_{i_1\dots i_D} = \left\{ \begin{array}{ll}
%\ten{A}_{i_1\dots i_D} & \Omega_{i_1\dots i_D} = 1 \\
%0 & \text{otherwise} \end{array} \right.$,
%and $\Omega^{\bot}$ is the complement
%of a set $\Omega$.

For a convex but nonsmooth function $f$,
the subgradient 
is 
$g \in \partial f(x)$ where $\partial f(x) = \{u : f(y) \geq f(x) + u^{\top}\left( y - x
\right), \forall y\}$ is its subdifferential.
When $f$ is differentiable, we use $\nabla f$ for its gradient.

% ----------------------------------------------------------------------
% Related Work
% ----------------------------------------------------------------------
\section{Related Work}
\label{sec:rel}

%%%%%%%%%%%%%%%%%%%%%%%%%%%%%%%%%%%%%%%%

\subsection{Proximal Algorithms}
\label{sec:prox}

Consider minimizing
composite functions of the form:
\begin{equation} 
\label{eq:F}
F(x) \equiv f(x) + g(x), 
\end{equation} 
where $f, g$ are convex,
and $f$ is smooth but $g$ is possibly non-smooth.
The proximal algorithm
\cite{parikh2014proximal}
generates a sequence of estimates $\{x_t\}$ as
\begin{equation*} 
% \label{eq:proxstep}
x_{t + 1} = \prox{\mu g}{z_t}
\equiv 
\arg\min_x \frac{1}{2} \NM{x - z_t}{2}^2 + \mu g(x), 
\end{equation*} 
where
\begin{equation} \label{eq:z}
z_t = x_t - \mu\nabla f(x_t),
\end{equation} 
and
$\prox{\mu g}{\cdot}$
is the proximal operator.
When $f$ is $\rho$-Lipschitz smooth (i.e., $\|\nabla f(x_1) - \nabla
	f(x_2)\| \le \rho \|x_1 - x_2\|$)
and a fixed stepsize 
\begin{equation} \label{eq:stepsize}
\mu\leq 1/\rho
\end{equation} 
is used,
the proximal algorithm converges to the optimal solution
with a rate of $O(1/T)$, where $T$ is the number of
iterations
\cite{parikh2014proximal}.
By replacing the update in \eqref{eq:z} with
\begin{eqnarray} 
y_t & = & (1+ \theta_t) x_t - \theta_t x_{t-1}, \label{eq:y}\\
z_t & = & y_t - \mu\nabla f(y_t), \label{eq:z2}
\end{eqnarray} 
where
$\theta_{t + 1} = \frac{t - 1}{t + 2}$,
it can be accelerated  to 
a 
convergence
rate of $O(1/T^2)$ 
\cite{beck2009fast}.
%This is also known to be the best possible rate for problems of the form in \eqref{eq:F} \cite{nesterov2013gradient}.

%\begin{algorithm}[ht]
%	\caption{Accelerated proximal gradient (APG) algorithm \cite{beck2009fast,nesterov2013gradient}.}
%	\begin{algorithmic}[1]
%		\STATE initialize $x_0 = x_1 = 0$ and stepsize $\mu \in (0, 1/\rho]$;
%		\FOR{$t = 1,2,\cdots $}
%		\STATE $y_t = (1 + \theta_t) x_t - \theta_t x_{t - 1}$ where $\theta_t = \frac{t - 1}{t + 2}$;
%		\STATE $z_t = y_t - \mu \nabla f(y_t)$;
%		\STATE $x_{t + 1} = \prox{\mu g}{z_t}$;
%		\ENDFOR
%		\RETURN $x_{t+1}$.
%	\end{algorithmic}
%	\label{alg:apg}
%\end{algorithm} 

Often, $g$ is ``simple'' 
%(e.g., $g(\cdot) = \|\cdot\|_1$) 
in the sense that
$\prox{\mu g}{\cdot}$ can be easily obtained.
However, in more complicated problems such as overlapping group lasso \cite{jacob2009group},
$\prox{\mu g}{\cdot}$
may be expensive to compute.
To alleviate this problem, 
inexact proximal algorithm is proposed 
which allows two types of errors in standard/accelerated proximal algorithms \cite{schmidt2011convergence}: (i) an error $e_t$ in
computing $\nabla f(\cdot)$, and 
(ii) an error $\varepsilon_t$ in 
the proximal  step, i.e.,
\begin{equation} \label{eq:err}
h_{\mu g}(x_{t+1}; z_t) \le \varepsilon_t + h_{\mu g}(\prox{\mu g}{z_t}; z_t),
\end{equation} 
where 
\begin{equation} \label{eq:h}
h_{\mu g}(x;z_t) \equiv \frac{1}{2} \|x - z_t\|^2 + \mu g(x)
\end{equation} 
is the 
proximal  step's
objective.
Let the dual problem of $\min_x h_{\mu g}(x;z_t)$ be $\max_w \mathcal{D}_{\mu g}(w)$ where $w$ is the dual variable.
The the duality gap is defined as 
$\vartheta_t \equiv h_{\mu g}(x_{t + 1} ; z_t) - \mathcal{D}(w_{t + 1})$ where $w_{t + 1}$ is the corresponding dual variable of $x_{t + 1}$.
Then $\varepsilon_t$ is upper-bounded  by the duality gap $\vartheta_t$.
Thus, \eqref{eq:err} can be ensured by monitoring
$\vartheta_t$.
The following Proposition shows that by decreasing $e_t$ and $\varepsilon_t$ sufficiently fast, the convergence  rate remains
at $O(1/T^2)$. 

\begin{proposition} [\cite{schmidt2011convergence}]
	\label{cor:iapg:require}
	If $\|e_t\|$ and $\sqrt{\varepsilon_t}$ decrease as $O(1/t^{2+\delta})$ for some $\delta > 0$,
	the inexact accelerated proximal gradient algorithm converges with a rate of $O(1/T^2)$.
\end{proposition}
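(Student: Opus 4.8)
The plan is to adapt the convergence analysis of the inexact accelerated proximal gradient method from~\cite{schmidt2011convergence} to the composite objective~\eqref{eq:F}. The first ingredient is an approximate optimality lemma for the inexact proximal step: if $x_{t+1}$ satisfies the $\varepsilon_t$-optimality condition~\eqref{eq:err} for $\min_x h_{\mu g}(x;z_t)$, with $h_{\mu g}$ as in~\eqref{eq:h}, then $x_{t+1}$ behaves like an exact proximal point of $\mu g$ at $z_t$ up to a perturbation of size $O(\sqrt{\varepsilon_t})$. Concretely, one shows that there is a vector $\phi_{t+1}$ with $\|\phi_{t+1}\| \le \sqrt{2\mu\varepsilon_t}$ such that
\[
g(x) \ge g(x_{t+1}) + \frac{1}{\mu}\langle z_t - x_{t+1},\, x - x_{t+1}\rangle - \langle \phi_{t+1},\, x - x_{t+1}\rangle - \frac{\varepsilon_t}{\mu}
\]
for all $x$, i.e.\ an $\varepsilon_t$-subdifferential-type inequality. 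Combining this with the descent lemma for the $\rho$-Lipschitz-smooth $f$ applied at $y_t$ --- recalling that $z_t$ in~\eqref{eq:z2} is formed with a gradient perturbed by $e_t$ --- yields, for every $x$ and for the stepsize~\eqref{eq:stepsize}, a basic inequality
\[
F(x_{t+1}) - F(x) \le \frac{1}{2\mu}\!\left(\|x - y_t\|^2 - \|x - x_{t+1}\|^2\right) + \langle e_t + \phi_{t+1},\, x_{t+1} - x\rangle + \frac{\varepsilon_t}{\mu}.
\]

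Next I would run the standard Nesterov telescoping on this inequality. Instantiating it at $x = x^\star$ (a minimizer of $F$) and at $x = x_t$, taking the convex combination dictated by $\theta_t = \frac{t-1}{t+2}$ in~\eqref{eq:y}, and summing over $t$, one obtains a potential of the schematic form $\Phi_t \simeq \frac{t^2}{2\mu}\!\left(F(x_{t+1}) - F(x^\star)\right) + \frac{1}{2\mu}\|v_t\|^2$, where $v_t$ is the usual affine combination of $x_t$, $x_{t-1}$ and $x^\star$. The errors enter only through $\langle e_t + \phi_{t+1},\, x_{t+1} - x\rangle$ and $\varepsilon_t/\mu$; bounding $\|x_{t+1} - x^\star\|$ and $\|v_t\|$ by $\sqrt{2\mu\,\Phi_t}$ (these are controllable only through the potential itself) and accumulating gives
\[
\Phi_T \le \Phi_0 + \sum_{t=1}^T \frac{t^2\varepsilon_t}{\mu} + \sum_{t=1}^T t\!\left(\|e_t\| + \sqrt{2\mu\varepsilon_t}\right)\sqrt{\Phi_t} \;\equiv\; \bar S_T + \sum_{t=1}^T \bar\lambda_t\,\sqrt{\Phi_t},
\]
where $\bar S_T \equiv \Phi_0 + \sum_{t\le T} t^2\varepsilon_t/\mu$ is nondecreasing in $T$ and $\bar\lambda_t \equiv t(\|e_t\| + \sqrt{2\mu\varepsilon_t})$.

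Then I would invoke the Gronwall-type lemma of~\cite{schmidt2011convergence} (a variant of an estimate of Nesterov): if $u_t \ge 0$, $S_t \ge 0$ is nondecreasing, $S_0 \ge u_0^2$, and $u_T^2 \le S_T + \sum_{t=1}^T \lambda_t u_t$ for all $T$, then $u_T \le \tfrac12 \sum_{t=1}^T \lambda_t + \big(S_T + (\tfrac12\sum_{t=1}^T \lambda_t)^2\big)^{1/2}$. Applying it with $u_t = \sqrt{\Phi_t}$, $S_T = \bar S_T$ and $\lambda_t = \bar\lambda_t$ reduces the claim to showing that $\sum_{t\ge1}\bar\lambda_t$ and $\lim_{T\to\infty}\bar S_T$ are finite. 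Under the hypothesis $\|e_t\| = O(1/t^{2+\delta})$ and $\sqrt{\varepsilon_t} = O(1/t^{2+\delta})$ we have $\varepsilon_t = O(1/t^{4+2\delta})$, so $\sum_t t\|e_t\| = \sum_t O(1/t^{1+\delta}) < \infty$, $\sum_t t\sqrt{\varepsilon_t} = \sum_t O(1/t^{1+\delta}) < \infty$, and $\sum_t t^2\varepsilon_t = \sum_t O(1/t^{2+2\delta}) < \infty$. Hence $\Phi_T = O(1)$ uniformly in $T$, and since $\Phi_T \ge c\,T^2\!\left(F(x_T) - F(x^\star)\right)$ for an absolute constant $c > 0$, we conclude $F(x_T) - F(x^\star) = O(1/T^2)$.

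The step I expect to be the main obstacle is the first one: turning the \emph{value}-based inexactness~\eqref{eq:err} into a usable \emph{first-order} optimality condition carrying an explicit $O(\sqrt{\varepsilon_t})$ --- rather than $O(\varepsilon_t)$ --- perturbation of the subgradient, given that $g$ is only convex and possibly nonsmooth. This square-root loss is precisely what propagates into the hypothesis (forcing $\sqrt{\varepsilon_t}$, not $\varepsilon_t$, to be $O(1/t^{2+\delta})$) and into the $\bar\lambda_t$ term; pinning down the right power and the accompanying constant is delicate, whereas the subsequent telescoping and the application of the Gronwall-type lemma are routine bookkeeping.
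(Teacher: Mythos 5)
The paper does not actually prove this proposition --- it is imported directly from \cite{schmidt2011convergence} --- and your proposal is a faithful reconstruction of the argument given there: the conversion of the value-based inexactness \eqref{eq:err} into an $\varepsilon$-subdifferential condition carrying an $O(\sqrt{\varepsilon_t})$ perturbation, the Nesterov telescoping to a quadratic potential, the Gronwall-type lemma $u_T \le \tfrac12\sum_t\lambda_t + (S_T + (\tfrac12\sum_t\lambda_t)^2)^{1/2}$, and the summability of $t\|e_t\|$, $t\sqrt{\varepsilon_t}$ and $t^2\varepsilon_t$ under the stated decay rates are precisely the ingredients of Proposition~2 and Lemmas~1--2 of that reference. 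Your diagnosis that the square-root loss in the first step is what forces the hypothesis to be on $\sqrt{\varepsilon_t}$ rather than $\varepsilon_t$ is also exactly right, so there is nothing to correct.
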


In the sequel, as our focus is on matrix completion, the variable $x$ in \eqref{eq:F} will be a matrix $X$.

%%%%%%%%%%%%%%%%%%%%%%%%%%%%%%%%%%%%%%%%

\subsection{Soft-Impute}
\label{sec:softimpute}

Soft-Impute
\cite{mazumder2010spectral}
is a state-of-the-art algorithm for matrix completion.
At iteration $t$, let the current iterate be $X_t$.
The missing values in $O$ are filled in as
\begin{equation} \label{eq:zt}
Z_t = \SO{O} + \SOO{X_t} = \SO{O - X_t} + X_t,
\end{equation}
where 
$\Omega^{\bot}_{ij} = 1 - \Omega_{ij}$
is the complement of $\Omega$.
The next estimate $X_{t+1}$
is then generated by
the singular value thresholding (SVT) operator
\cite{cai2010singular} 
\begin{equation} \label{eq:svt}
X_{t+1} = \svt_{\l}(Z_t)
\equiv \arg \min_{X} \frac{1}{2}\NM{X - Z_t}{F}^2 + \lambda \NM{X}{*},
\end{equation}
which can be computed as follows.

\begin{lemma}[\cite{cai2010singular}] \label{lem:svt}
Let the SVD of a matrix $Z_t$ be $U \Sigma V^{\top}$.  Then,
$\svt_{\l}(Z_t) \equiv U ( \Sigma - \lambda I)_{+} V^{\top}$
where $[(A)_{+}]_{ij}=\max(A_{ij}, 0)$. 
\end{lemma}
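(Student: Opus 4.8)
The plan is to use that the SVT objective $h(X) \equiv \tfrac12\FN{X - Z_t}^2 + \lambda\NM{X}{*}$ is strongly convex, hence admits a unique minimizer, and then to verify that $\hat X \equiv U(\Sigma - \lambda I)_+ V^\top$ is that minimizer by checking the first-order optimality condition. Since $\tfrac12\FN{\cdot - Z_t}^2$ is differentiable, $\hat X$ is optimal iff $0 \in \hat X - Z_t + \lambda\,\partial\NM{\hat X}{*}$, i.e. iff $Z_t - \hat X \in \lambda\,\partial\NM{\hat X}{*}$.

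First I would record the standard characterization of the nuclear-norm subdifferential: if a matrix $\hat X$ has thin SVD $U_0\Sigma_0 V_0^\top$ with $\Sigma_0$ containing only the positive singular values, then $\partial\NM{\hat X}{*} = \big\{ U_0 V_0^\top + W \,:\, U_0^\top W = 0,\ W V_0 = 0,\ \NM{W}{\infty}\le 1 \big\}$. Next, writing the SVD of $Z_t$ as $U\Sigma V^\top$, I would split it according to whether a singular value exceeds $\lambda$, i.e. $U = [\,U_0\ \ U_1\,]$, $V = [\,V_0\ \ V_1\,]$, $\Sigma = \mathrm{diag}(\Sigma_0, \Sigma_1)$ with $\Sigma_0 \succ \lambda I$ and $\Sigma_1 \preceq \lambda I$, so that $\hat X = U_0(\Sigma_0 - \lambda I)V_0^\top$. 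A one-line computation then gives $Z_t - \hat X = \lambda U_0 V_0^\top + U_1\Sigma_1 V_1^\top = \lambda\big(U_0 V_0^\top + W\big)$ with $W \equiv \lambda^{-1}U_1\Sigma_1 V_1^\top$. Because the columns of $U_1$ are orthogonal to those of $U_0$, and likewise for $V$, we get $U_0^\top W = 0$ and $W V_0 = 0$; and $\NM{W}{\infty} = \lambda^{-1}\NM{\Sigma_1}{\infty}\le 1$ since every diagonal entry of $\Sigma_1$ is at most $\lambda$. Hence $Z_t - \hat X \in \lambda\,\partial\NM{\hat X}{*}$, so $\hat X$ is the unique minimizer, which is precisely the claimed formula.

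An alternative route, avoiding the subdifferential lemma, is a direct majorization argument: expand $h(X) = \tfrac12\FN{X}^2 - \ip{X, Z_t} + \tfrac12\FN{Z_t}^2 + \lambda\NM{X}{*}$ and apply von Neumann's trace inequality $\ip{X, Z_t}\le\sum_i\sigma_i(X)\sigma_i(Z_t)$ to obtain $h(X)\ge \tfrac12\FN{Z_t}^2 + \sum_i\big(\tfrac12\sigma_i(X)^2 - \sigma_i(X)\sigma_i(Z_t) + \lambda\sigma_i(X)\big)$. The right-hand side is separable in the $\sigma_i(X)\ge 0$, and each scalar term is minimized at $\sigma_i(X) = (\sigma_i(Z_t) - \lambda)_+$ (ordinary soft-thresholding). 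Since $\hat X$ shares the singular vectors of $Z_t$, it makes von Neumann's inequality an equality and has exactly these singular values, so it attains the lower bound and is therefore optimal.

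The real content — and the step I would spend the most care on — is whichever structural fact the chosen route rests on: the subdifferential formula for $\NM{\cdot}{*}$ in the first route, or von Neumann's trace inequality together with the characterization of its equality case in the second. Everything else (strong convexity for uniqueness, the block multiplication that identifies $W$, and the one-variable minimization $\min_{s\ge0}\tfrac12 s^2 - s(\sigma-\lambda)$) is routine, and each route may simply cite the needed structural fact from standard references.
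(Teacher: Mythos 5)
Your proposal is correct. The paper does not prove this lemma itself but imports it from \cite{cai2010singular}, and your first route (uniqueness from strong convexity plus verifying $Z_t - \hat X \in \lambda\,\partial\NM{\hat X}{*}$ via the standard nuclear-norm subdifferential, with $W = \lambda^{-1}U_1\Sigma_1 V_1^{\top}$) is essentially the proof given there; your von Neumann alternative is also a valid, well-known substitute.
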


%\begin{algorithm}[ht]
%\caption{Soft-Impute \cite{mazumder2010spectral}.}
%	\begin{algorithmic}[1]
%		\REQUIRE Partially observed matrix $O$, parameter $\lambda$; 
%		\STATE initialize $X_1 = 0$;
%%		and  $\nu \in (0, 1)$;
%%		\STATE $\lambda_0 =$ largest singular value of $\SO{O}$;
%		\FOR{$t = 1,2,\cdots $}
%%		\STATE $\lambda_t = (\lambda_0 - \lambda) \nu^{t - 1} + \lambda$; 
%		\STATE $Z_t = \SO{O} + \SOO{X_t}$;
%		\STATE $X_{t+1} = \svt_{\lambda}(Z_t)$;
%		\ENDFOR
%		\RETURN $X_{t+1}$.
%	\end{algorithmic}
%	\label{alg:softImpute}
%\end{algorithm} 

Let $\bar{k}_t$ be the number of singular values in $Z_t$ that are larger than $\lambda$.
From Lemma~\ref{lem:svt},
a rank-$k_t$ SVD, where $k_t \ge \bar{k}_t$,
is sufficient for computing $X_{t+1}$ in (\ref{eq:svt}).
In \cite{mazumder2010spectral},
this 
rank-$k_t$ SVD
is obtained by the PROPACK algorithm \cite{larsen1998lanczos}.
The most expensive steps in computing the SVD are 
matrix-vector multiplications of the 
form $Zu$ and 
$v^{\top}Z$, 
where $u\in\R^n$ and $v \in \R^{m}$.
In general,
the above multiplications take $O( m n )$ time 
%if $Z_t$ have no special structure,
and rank-$k_t$ SVD on $Z_t$ takes $O( m n k_t )$ time.

However,
to make Soft-Impute efficient, an important observation 
in \cite{mazumder2010spectral}
is that  
$Z_t$  in (\ref{eq:zt})
has a special 
``sparse plus low-rank'' 
structure, namely that
$\SO{O - X_t}$ is sparse and $X_t$ is low-rank.
Multiplications of the form $Z_tu$ and $v^{\top}Z_t$ can 
then
be efficiently performed as follows.
Let the rank of $X_t$  be $r_t$, and its SVD be $U_t \Sigma_t {V_t}^{\top}$.
$Z_t v$ can be computed as
\begin{equation} 
\label{eq:zu}
Z_t v = \SO{O - X_t } v
+ U_t \Sigma_t ({V_t}^{\top} v).
\end{equation}
Constructing $\SO{O - X_t}$ takes $O(r_t \NM{\O}{1})$ time, while computing the products $\SO{O - X_t } u$ 
and $U_t \Sigma_t ({V_t}^{\top} u)$ take $O(\|\O\|_1)$ 
and $O(m r_t)$ time, respectively.
Similarly, $u^{\top} Z_t$ can be computed as
%\begin{equation*} 
%$u^{\top} Z_t = 
$u^{\top} \SO{O - X_t } + (u^{\top} U_t) \Sigma_t {V_t}^{\top}$.
%\end{equation*}
Thus, to obtain the rank-$k$ SVD of $Z_t$, 
Soft-Impute needs only 
\begin{align}
O(k_t \NM{\O}{1} + r_t k_t m )
\label{eq:timesi-svd}
\end{align}
time,
and one iteration costs
\begin{align}
O((r_t + k_t) \NM{\O}{1} + r_t k_t m )
\label{eq:time-si}
\end{align}
time.
Since 
the solution is 
low-rank,
$k_t, r_t \ll m$,
%(this is also observed in \cite{mazumder2010spectral}).
and \eqref{eq:time-si} is much faster than 
the $O(m n k_t)$ time
for direct rank-$k_t$ SVD.

% ----------------------------------------------------------------------
% Speedup Soft-Impute
% ----------------------------------------------------------------------

\vspace{-15px}

\section{Accelerated Inexact Soft-Impute}
\label{sec:alg}

%%%%%%%%%%%%%%%%%%%%%%%%%%%%%%%%%%%%%%%%

In this section,
we describe the proposed 
matrix completion
algorithm.
Tibshirani \cite{tibshirani2010} suggested that acceleration is not useful for Soft-Impute,
	as it destroys the essential ``sparse plus low-rank'' structure.
However, we will show that it can indeed be preserved with acceleration.
We also show that further speedup can be achieved by using approximate SVT.

\vspace{-5px}

\subsection{Soft-Impute as a Proximal Algorithm}
\label{sec:ispg}

%Here, we show that Soft-Impute is actually a proximal gradient algorithm for the matrix completion problem in \eqref{eq:mc}, which is also our motivation to consider acceleration.
In (\ref{eq:mc}),
let 
\begin{equation} \label{eq:f}
f(X) = \frac{1}{2}\NM{\SO{X - O}}{F}^2= \sum_{(i,j) \in \Omega}\ell(X_{ij}, O_{ij}),
\end{equation} 
where
$\ell$ is the 
loss  function,
and $g(X) = \lambda \NM{X}{*}$.
The proximal step in the (unaccelerated) proximal algorithm is
\begin{equation*}
X_{t + 1} = \prox{\mu g}{Z_t} \equiv \arg\min_x \frac{1}{2} \NM{X - Z_t}{F}^2 + \mu \lambda \NM{X}{*},
\end{equation*}
where $Z_t = X_t - \mu \SO{X_t - O}$.
%and $\mu$ is required  to be $\le 1/\rho$.
Note that the square loss 
$\ell(X_{ij}, O_{ij}) \equiv \frac{1}{2}(X_{ij} - O_{ij})^2$
in \eqref{eq:mc}
is $1$-Lipschitz smooth.
The following shows that
$f$ 
in (\ref{eq:f})
is also $1$-Lipschitz smooth.
% The proof can be found in Appendix~A.1.

\begin{proposition} \label{pr:lipmac}
If $\ell$ is $\rho$-Lipschitz smooth,
$f$ in (\ref{eq:f}) is also $\rho$-Lipschitz smooth.
\end{proposition}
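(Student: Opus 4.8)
The plan is to compute $\nabla f$ explicitly and then bound the Frobenius norm of the difference of gradients at two points using only the assumed scalar Lipschitz-smoothness of $\ell$. First I would observe that, since $f(X) = \sum_{(i,j)\in\Omega}\ell(X_{ij},O_{ij})$ is a separable sum over the observed entries, its gradient has entries $[\nabla f(X)]_{ij} = \ell'(X_{ij},O_{ij})$ for $(i,j)\in\Omega$ and $0$ otherwise, where $\ell'$ denotes the partial derivative of $\ell$ with respect to its first argument. Compactly, $\nabla f(X) = \SO{\ell'(X,O)}$, with $\ell'(\cdot,\cdot)$ applied entrywise.

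Next, for any two matrices $X_1,X_2$ I would write
\begin{equation*}
\FN{\nabla f(X_1) - \nabla f(X_2)}^2 = \sum_{(i,j)\in\Omega}\bigl(\ell'((X_1)_{ij},O_{ij}) - \ell'((X_2)_{ij},O_{ij})\bigr)^2 .
\end{equation*}
Because $\ell(\cdot,c)$ is $\rho$-Lipschitz smooth for each fixed second argument $c$, each summand is at most $\rho^2\bigl((X_1)_{ij}-(X_2)_{ij}\bigr)^2$. Summing over $(i,j)\in\Omega$ and then enlarging the index set to all pairs $(i,j)$ (which only adds nonnegative terms) gives
\begin{equation*}
\FN{\nabla f(X_1) - \nabla f(X_2)}^2 \le \rho^2\sum_{i,j}\bigl((X_1)_{ij}-(X_2)_{ij}\bigr)^2 = \rho^2\FN{X_1-X_2}^2 .
\end{equation*}
Taking square roots yields $\FN{\nabla f(X_1)-\nabla f(X_2)}\le\rho\FN{X_1-X_2}$, i.e.\ $f$ is $\rho$-Lipschitz smooth.

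There is no substantial obstacle here; the only point needing a little care is to state precisely what ``$\ell$ is $\rho$-Lipschitz smooth'' means for the bivariate loss — namely, $\rho$-Lipschitz continuity of the partial derivative in the first argument, uniformly over the second — and to note that restricting the sum to $\Omega$ discards only nonnegative terms so the bound is preserved. As a sanity check, for the square loss $\ell(a,c)=\tfrac12(a-c)^2$ one has $\ell'(a,c)=a-c$, giving $\rho=1$ and recovering the $1$-Lipschitz-smoothness of the matrix-completion objective noted just before the proposition.
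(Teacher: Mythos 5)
Your proof is correct and follows essentially the same route as the paper's: write $\nabla f$ entrywise as the partial derivatives of $\ell$ on $\Omega$, bound each squared difference by $\rho^2((X_1)_{ij}-(X_2)_{ij})^2$ using the scalar Lipschitz-smoothness, and enlarge the sum to the full Frobenius norm. Your added remarks on the precise meaning of Lipschitz smoothness for the bivariate loss and the sanity check with the square loss are fine but not needed.
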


From \eqref{eq:stepsize},
one can thus simply set $\mu = 1$ for \eqref{eq:mc}. 
We then have
$X_{t + 1} = \prox{g}{Z_t} = \svt_{\l}(Z_t)$
which is the same as \eqref{eq:svt}.
Hence, 
interestingly, 
Soft-Impute is a proximal algorithm
\cite{tibshirani2010}, 
and thus converges at a rate of $O(1/T)$ \cite{mazumder2010spectral}.

%%%%%%%%%%%%%%%%%%%%%%%%%%%%%%%%%%%%%%%%

\subsection{Accelerating Soft-Impute}
\label{sec:compSVT}

Since Soft-Impute is a proximal algorithm, it is natural to 
use acceleration (Section~\ref{sec:prox}). 
Recall that the
efficiency of Soft-Impute hinges on the ``sparse plus low-rank'' structure of $Z_t$,
which allows matrix-vector multiplications of the form $Z_tu$ and $v^{\top}Z_t$ to be computed inexpensively. 
To accelerate Soft-Impute, 
from \eqref{eq:y} and  \eqref{eq:z2},
we have to compute 
\begin{equation} \label{eq:svtacc}
\prox{g}{\tZ_t} 
\! = \!\svt_\l(\tZ_t)
\! = \! \arg\min_{X} \frac{1}{2}\NM{X - \tZ_t}{F}^2 \! + \!\lambda \NM{X}{*},
\end{equation}
where
$Y_t = (1 + \theta_t) X_t - \theta_t X_{t - 1}$, and
\begin{align} 
\tZ_t = \SO{O - Y_t} + (1+ \theta_t) X_t - \theta_t X_{t-1}.
\label{eq:newz}
\end{align} 
In the following, we show that $\tZ_t$ also has a similar 
``sparse plus low-rank'' structure.

Assume that $X_t$ and $X_{t-1}$ have ranks $r_t$ and $r_{t - 1}$,  
and their SVDs are $U_t \Sigma_t V_t^{\top}$ and $U_{t-1} \Sigma_{t-1} V_{t-1}^{\top}$, respectively. 
Note that $\SO{O - Y_t}$ is sparse,
and $(1+ \theta_t) X_t - \theta_t X_{t-1}$ 
%is low-rank (of which 
has rank at most $r_t + r_{t - 1}$.
Similar to (\ref{eq:zu}), for any $v \in \R^n$,  
we have
\begin{align*}
\tZ_t v 
\! = \!  
\SO{O \! - \! Y_t } v
\! + \! (1 \! + \! \theta_t) U_t \Sigma_t ( V_t^{\top} v)
\! - \! \theta_t U_{t-1} \Sigma_{t-1} ( V_{t-1}^{\top} v ) .
\end{align*}
The first term takes $O(\NM{\Omega}{1})$ time while the last two terms
take $O((r_{t-1} + r_t) m)$ time, 
thus
a total of
$O(\NM{\O}{1} + (r_{t-1} + r_t)m)$ time.
Similarly, for 
any $u \in \R^m$,
we have
\begin{align*} 
\!\!\!
u{\!}^{\top} \! \tZ_t
\! = \! u {\!}^{\top} \!\! \SO{O \! - \! Y_t } 
\! + \! (1 \! + \! \theta_t) (u{\!}^{\top} \! U_t) \Sigma_t V_t^{\top}
\!\!\! - \! \theta_t (u {\!}^{\top} {\!} U_{t-1}) \Sigma_{t-1} V_{t-1}^{\top}.
\end{align*}
This takes $O(\NM{\O}{1} + (r_{t-1} + r_t)m)$ time.
The rank-$k_t$ SVD of $\tZ_t$ can be obtained using PROPACK in  
\begin{align}
O( k_t \NM{\O}{1} + (r_{t-1} + r_t)k_t m)
\label{eq:timeexact}
\end{align}
time.
As the target matrix
is low-rank, $r_{t - 1}$ and $r_t$ are much smaller than $n$.
%\cite{mazumder2010spectral,toh2010accelerated}, 
Hence, \eqref{eq:timeexact} is much faster than the $O(m n k_t)$ time required for a direct rank-$k_t$ SVD.

%Using (\ref{eq:yt}),
%$\SO{O - Y_t} = \SO{O} - (1+ \theta_t)\SO{X_t} + \theta_t \SO{X_{t- 1}}$.
%As $\SO{X_{t - 1}}$ has already been computed in the last iteration,
%we only need to compute $\SO{X_t}$, which
%takes $O(r_t \|\O\|_1)$ time.

The accelerated algorithm has a slightly higher iteration complexity than the
unaccelerated one in \eqref{eq:timesi-svd}.
However, this is more than compensated by improvement in the convergence rate
(from $O(1/T)$ to $O(1/T^2)$),
as will be empirically demonstrated in Section~\ref{sec:matcomp:syn}.

%%%%%%%%%%%%%%%%%%%%%%%%%%%%%%%%%%%%%%%%

\subsection{Approximating the SVT}
\label{sec:apprSVT}

Though acceleration preserves the ``sparse plus low-rank'' structure,
the proposed algorithm (and
Soft-Impute)
can still be computationally expensive as the SVT in each iteration uses exact 
%rank-$k_t$ 
SVD.
In this section, we show that further speedup is possible by using inexact 
%SVT (with approximate 
SVD.

As SVT 
in \eqref{eq:svt} 
can be seen as a proximal step,
one might want to perform inexact SVT 
%(equivalently, inexact proximal step) 
by monitoring the duality gap as in
Section~\ref{sec:prox}.
It can be shown that the dual of \eqref{eq:svtacc} is 
\begin{align}
\max_{\NM{W}{\infty} \le 1} \tr{W^{\top} \tZ_t} - \frac{\lambda}{2} \NM{W}{F}^2,
\label{eq:dualSVT}
\end{align}
where $W \in \R^{m \times n}$ is the dual variable.

\begin{proposition}[\cite{parikh2014proximal}] \label{lem:dualopt}
Let the SVD of matrix $\tZ_t$ be $U \Sigma V^{\top}$.  The optimal solution of
\eqref{eq:dualSVT} is $W_* = U \min(\Sigma, \lambda I) V^{\top}$, where 
%the $(i,j)$th element of 
$[\min(A, B)]_{ij}
	=\min(A_{ij}, B_{ij})$.
\end{proposition}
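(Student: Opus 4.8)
The plan is to recognize the dual problem \eqref{eq:dualSVT} as a Euclidean projection onto a spectral-norm ball and then evaluate that projection in closed form. First I would complete the square in the objective of \eqref{eq:dualSVT}: up to an additive term independent of $W$ (and a harmless positive rescaling of the dual variable and of the constraint radius), maximizing $\tr{W^{\top}\tZ_t} - \frac{\lambda}{2}\NM{W}{F}^2$ is equivalent to minimizing $\FN{W - \tZ_t}^2$ over the spectral-norm ball $\{W : \NM{W}{\infty} \le \lambda\}$, i.e. to projecting $\tZ_t$ onto that ball. Since the feasible set is convex and compact and the objective is strictly concave, the maximizer $W_*$ exists and is unique, so it suffices to check that $W_* = U\min(\Sigma, \lambda I)V^{\top}$ is this projection.

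To do so I would expand $\FN{W - \tZ_t}^2 = \NM{W}{F}^2 - 2\tr{W^{\top}\tZ_t} + \NM{\tZ_t}{F}^2$ and observe that the only term coupling $W$ with $\tZ_t$ is $\tr{W^{\top}\tZ_t}$. By von Neumann's trace inequality, $\tr{W^{\top}\tZ_t} \le \sum_i \sigma_i(W)\,\sigma_i(\tZ_t)$, with equality attained exactly when $W$ admits an SVD $W = U\,\Diag{w}\,V^{\top}$ sharing the singular-vector matrices $U,V$ of $\tZ_t$, the entries of $w\ge 0$ being ordered consistently with the $\sigma_i(\tZ_t)$. Hence an optimal $W$ can be taken of this form; then $\NM{W}{\infty} = \max_i w_i$, so the constraint becomes $0 \le w_i \le \lambda$ for every $i$, and the projection decouples into the scalar problems $\min_{0\le w_i\le\lambda}(w_i - \sigma_i(\tZ_t))^2$. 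As $\sigma_i(\tZ_t)\ge 0$, each is solved by clipping, $w_i = \min(\sigma_i(\tZ_t), \lambda)$, which reassembles into $W_* = U\min(\Sigma, \lambda I)V^{\top}$. (Alternatively, one could verify directly the optimality condition that $\tZ_t - \lambda W_* = U(\Sigma - \lambda I)_{+}V^{\top}$ lies in the normal cone of the spectral-norm ball at $W_*$; this also makes transparent that the associated primal iterate is $\svt_{\l}(\tZ_t)$. The projection route seems shorter.)

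I expect the main obstacle to be the step that turns the matrix projection into a separable scalar problem, namely justifying that some maximizer must be simultaneously diagonalizable with $\tZ_t$ in its singular bases. This relies on von Neumann's trace inequality together with a careful analysis of its equality case, in particular when $\tZ_t$ has repeated or vanishing singular values; once that is settled, the remaining minimization over the $w_i$ is elementary. A minor additional point is keeping track of the scaling between \eqref{eq:dualSVT} as written and the clean projection formulation, which affects only additive/multiplicative constants and not the optimizer.
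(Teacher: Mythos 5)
The paper does not actually prove this proposition --- it is imported from the cited reference --- so there is nothing internal to compare against; your argument stands on its own, and its core (recognize the dual as a Euclidean projection onto a spectral-norm ball, reduce to singular values via von Neumann's trace inequality, and clip) is the standard and correct route. Two refinements. First, the ``main obstacle'' you identify --- the equality case of von Neumann's inequality --- is not actually needed. The inequality alone gives, for every feasible $W$, the lower bound $\FN{W - \tZ_t}^2 \ge \sum_i \left(\sigma_i(W) - \sigma_i(\tZ_t)\right)^2 \ge \sum_i \left(\min(\sigma_i(\tZ_t),\lambda) - \sigma_i(\tZ_t)\right)^2$, and the explicit feasible candidate $U\min(\Sigma,\lambda I)V^{\top}$ attains it; strict convexity of $W \mapsto \FN{W-\tZ_t}^2$ then gives uniqueness. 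No characterization of when equality holds (repeated or vanishing singular values included) is required.

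Second, the scaling point you dismiss as ``minor'' deserves more care, because a rescaling of the dual variable \emph{does} rescale the optimizer. Taking \eqref{eq:dualSVT} literally, completing the square gives $\tr{W^{\top}\tZ_t} - \frac{\lambda}{2}\NM{W}{F}^2 = -\frac{\lambda}{2}\FN{W - \frac{1}{\lambda}\tZ_t}^2 + \frac{1}{2\lambda}\NM{\tZ_t}{F}^2$, so the maximizer over $\{\NM{W}{\infty}\le 1\}$ is the projection of $\frac{1}{\lambda}\tZ_t$ onto the \emph{unit} ball, namely $\frac{1}{\lambda}U\min(\Sigma,\lambda I)V^{\top}$ (consistent with the primal--dual relation $X_* = \tZ_t - \lambda W_*$ and $\svt_{\l}(\tZ_t) = U(\Sigma-\lambda I)_+V^{\top}$). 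Your reformulation --- objective $\tr{W^{\top}\tZ_t}-\frac{1}{2}\NM{W}{F}^2$ over $\{\NM{W}{\infty}\le\lambda\}$, obtained by absorbing a factor of $\lambda$ into $W$ --- is precisely the normalization under which the stated formula $W_* = U\min(\Sigma,\lambda I)V^{\top}$ is the exact optimizer, and your proof is correct for that version. So the substance of your argument is fine; just replace the claim that the rescaling ``does not affect the optimizer'' with an explicit statement of which normalization you are proving the formula for.
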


Proposition~\ref{lem:dualopt} shows that
%As can be seen,
%unlike Lemma~\ref{lem:svt}, no singular values is shrunk here, and 
a full SVD 
%on $\tZ_t$ 
is required.
This takes $O(m^2 n)$ time and is even more expensive
than 
directly using SVT
($O(m n k_t)$ time).
Instead, the proposed approximation is motivated by the following Proposition.
% The proof can be found in Appendix~A.2.

\begin{proposition} \label{pr:approGSVT} 
Let $\breve{k}_t$ be the number of singular values in $\tZ_t \in \R^{m \times n}$ larger than $\lambda$,
and $Q \in \R^{m \times k_t}$, where $k_t \ge \breve{k}_t$, be orthogonal and contains
the subspace spanned by the top
$\breve{k}_t$ left singular vectors of $\tZ_t$.
Then, $\svt_\l(\tZ_t) = Q \svt_\l(Q^{\top} \tZ_t)$.
\end{proposition}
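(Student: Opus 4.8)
The plan is to reduce the claim to the explicit SVD formula for SVT in Lemma~\ref{lem:svt}. Write a (thin) SVD of $\tZ_t$ as $U \Sigma V^\top$ with singular values $\sigma_1 \ge \dots \ge \sigma_n$ and columns $u_i$ of $U$, $v_i$ of $V$; let $U_1, V_1$ collect the first $\breve{k}_t$ columns of $U, V$ and $\Sigma_1 = \Diag{\sigma_1, \dots, \sigma_{\breve{k}_t}}$, so that $\sigma_i > \lambda$ for $i \le \breve{k}_t$ and $\sigma_i \le \lambda$ for $i > \breve{k}_t$. By Lemma~\ref{lem:svt}, $\svt_\l(\tZ_t) = U_1 (\Sigma_1 - \lambda I) V_1^\top$, since the positive-part operation is vacuous on this block and kills everything else. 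In particular $\Span{\svt_\l(\tZ_t)} \subseteq \Span{U_1} \subseteq \Span{Q}$, hence $Q Q^\top \svt_\l(\tZ_t) = \svt_\l(\tZ_t)$.

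The second ingredient is a commutation identity: for any $M \in \R^{k_t \times n}$ one has $\svt_\l(Q M) = Q\, \svt_\l(M)$. This follows directly from Lemma~\ref{lem:svt}: if $\hat U \hat\Sigma \hat V^\top$ is an SVD of $M$, then $Q\hat U$ still has orthonormal columns (because $Q$ and $\hat U$ do), so $(Q\hat U)\hat\Sigma\hat V^\top$ is an SVD of $QM$ with the same singular values, and applying the thresholding formula gives $\svt_\l(QM) = Q\hat U(\hat\Sigma - \lambda I)_+ \hat V^\top = Q\,\svt_\l(M)$. Taking $M = Q^\top \tZ_t$ yields $Q\, \svt_\l(Q^\top \tZ_t) = \svt_\l(Q Q^\top \tZ_t)$, so it only remains to show $\svt_\l(QQ^\top \tZ_t) = \svt_\l(\tZ_t)$, i.e.\ that projecting the columns of $\tZ_t$ onto $\Span{Q}$ before thresholding changes nothing.

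For this last step I would decompose $P \tZ_t$, where $P = QQ^\top$. Since $\Span{U_1} \subseteq \Span{Q}$, extend $u_1, \dots, u_{\breve{k}_t}$ to an orthonormal basis of $\Span{Q}$; the additional vectors span a subspace $W'$ orthogonal to $\Span{U_1}$, and $P = U_1 U_1^\top + P'$ with $P'$ the projector onto $W'$. Then $P\tZ_t = U_1 \Sigma_1 V_1^\top + P'\tZ_t$. The first term is already an SVD, with all singular values exceeding $\lambda$. For the second, $P' u_i = 0$ for $i \le \breve{k}_t$ gives $P'\tZ_t = P'(\tZ_t - U_1\Sigma_1 V_1^\top)$, so $\NM{P'\tZ_t}{\infty} \le \NM{\tZ_t - U_1 \Sigma_1 V_1^\top}{\infty} = \sigma_{\breve{k}_t + 1} \le \lambda$; moreover its column space lies in $W' \perp \Span{U_1}$ and its row space in $\Span{V_1}^\perp$. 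Hence the two terms have mutually orthogonal column spaces and mutually orthogonal row spaces, so concatenating their SVDs produces an SVD of $P\tZ_t$ in which precisely the values $\sigma_1, \dots, \sigma_{\breve{k}_t}$ (with singular vectors $u_i, v_i$) exceed $\lambda$. Lemma~\ref{lem:svt} then gives $\svt_\l(P\tZ_t) = U_1(\Sigma_1 - \lambda I)V_1^\top = \svt_\l(\tZ_t)$, and chaining these identities proves the proposition.

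The main obstacle is precisely this last step: one must rule out the possibility that the column projection inflates some ``spurious'' singular value past the threshold $\lambda$, which would corrupt the thresholded output. The block-orthogonality argument above handles this; an alternative route is the variational characterization $\svt_\l(\cdot) = \arg\min_X \frac{1}{2}\NM{X - \cdot}{F}^2 + \lambda \NM{X}{*}$, using the Pythagorean identity $\NM{X - \tZ_t}{F}^2 = \NM{X - P\tZ_t}{F}^2 + \NM{(I - P)\tZ_t}{F}^2$ for $X$ whose columns lie in $\Span{Q}$, together with $\NM{PX}{*} \le \NM{X}{*}$ and uniqueness of the strongly convex proximal minimizer, to conclude that $\svt_\l(\tZ_t)$ (whose columns already lie in $\Span{Q}$) also minimizes the objective centered at $P\tZ_t$.
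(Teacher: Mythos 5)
Your proof is correct, and it reaches the conclusion by a genuinely different (and somewhat more elementary) route than the paper. The paper works directly with the small matrix $Q^{\top}\tZ_t$: it invokes the separation theorem $\sigma_i(B^{\top}A) \le \sigma_i(A)$ to show that the top $\breve{k}_t$ singular triplets of $Q^{\top}\tZ_t$ coincide (up to the rotation by $Q^{\top}$) with those of $\tZ_t$ while $\sigma_{\breve{k}_t+1}(Q^{\top}\tZ_t) \le \sigma_{\breve{k}_t+1}(\tZ_t) \le \lambda$, splits $\svt_\l(Q^{\top}\tZ_t)$ over the two orthogonal SVD blocks, and then multiplies back by $Q$. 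You instead establish the commutation identity $\svt_\l(QM) = Q\,\svt_\l(M)$ (which is immediate from Lemma~\ref{lem:svt} since left-multiplying an SVD by a column-orthogonal matrix yields an SVD with the same singular values) and thereby reduce everything to the single claim $\svt_\l(QQ^{\top}\tZ_t) = \svt_\l(\tZ_t)$; this you handle by the decomposition $P\tZ_t = U_1\Sigma_1V_1^{\top} + P'\tZ_t$ with mutually orthogonal column and row spaces, using only the operator-norm contraction $\NM{P'\tZ_t}{\infty} \le \sigma_{\breve{k}_t+1}(\tZ_t) \le \lambda$ in place of the separation theorem. Both arguments rest on the same essential fact --- the projection preserves the supra-threshold block exactly and cannot inflate any residual singular value past $\lambda$ --- but your version avoids the interlacing machinery, and your closing variational sketch (Pythagorean splitting plus $\NM{PX}{*} \le \NM{X}{*}$ and uniqueness of the strongly convex minimizer) is a valid second route that generalizes beyond the nuclear norm to any unitarily invariant regularizer. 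One cosmetic remark: the paper's own proof writes the threshold as $\mu$ in places where it should read $\lambda$ (or $\mu\lambda$ in the algorithmic context); your statement of the threshold is the consistent one.
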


Since a low-rank solution is desired,
$k_t$ can be much smaller than $m$
\cite{mazumder2010spectral}.
Thus,
once we identify the span of $\tZ_t$'s top left singular vectors, 
we only need to perform 
SVT on 
the much smaller $Q^{\top} \tZ_t \in \R^{k_t \times n}$
(instead of $\tZ \in \R^{m \times n}$).
%since typically $k \ll n$. 
The question is how to find $Q$.
We adopt the power method (Algorithm~\ref{alg:powermethod}) \cite{halko2011finding},
which is more efficient 
than PROPACK
\cite{wu2000thick}.
Matrix $R_t$ in 
Algorithm~\ref{alg:powermethod}
is for warm-start.

\begin{algorithm}[ht]
\caption{$\text{PowerMethod}(\tZ_t, R_t, J)$ \cite{halko2011finding}}
	\begin{algorithmic}[1]
		\REQUIRE $\tZ_t \in \R^{m \times n}$, $R_t \in \R^{n \times k_t}$, and the 
		number of iterations $J$;
		\STATE initialize $Q_0 = \text{QR} (\tZ_t R_t)$; 
		$//$ QR$(\cdot)$ is QR factorization
		\FOR{$j = 1, 2, \dots , J$}
		\STATE $Q_j = \text{QR} (\tZ_t (\tZ_t^{\top} Q_{j - 1}))$;
		\ENDFOR
		\RETURN $Q_J$.
	\end{algorithmic}
	\label{alg:powermethod}
\end{algorithm}

Algorithm~\ref{alg:apprSVT} shows the approximate SVT procedure.
Step~1 approximates the top $k_t$ left singular vectors of $\tZ_t$ with $Q$.
%by Algorithm~\ref{alg:powermethod}.
In steps~2 to 5, a much smaller and less expensive  (exact) SVT is performed on
$Q^{\top}\tZ_t$.
Finally,  
$\svt_\l(\tZ_t)$ is recovered as $\tilde{X} = (Q U) \Sigma V^{\top}$ using Proposition~\ref{pr:approGSVT}.

\begin{algorithm}[ht]
\caption{Approximating the SVT of $\tZ_t$: 
	 approx-SVT$(\tZ_t, R_t, \lambda, J)$}
\begin{algorithmic}[1]
	\REQUIRE $\tZ_t \in \R^{m \times n}$, $R_t \in \R^{n \times k_t}$ and $\lambda \ge 0$;
	\STATE $Q = \text{PowerMethod}(\tZ_t, R_t, J)$;
	\STATE $[U, \Sigma, V] = \text{SVD}(Q^{\top}\tZ_t)$;
	\STATE $U = \{u_i \;| \;\sigma_i > \lambda \}$;
	\STATE $V = \{v_i \;| \;\sigma_i > \lambda \}$;
	\STATE $\Sigma = (\Sigma - \lambda I)_+$;
	\RETURN $Q U, \Sigma$ and $V$. $//$ $\tilde{X} = (Q U) \Sigma V$
\end{algorithmic}
\label{alg:apprSVT}
\end{algorithm}

%The following Proposition provides guarantee on the 
%quality of 
%approximating 
%$\svt_\l(\tZ_t)$ by
%$\tilde{X}$ obtained from Algorithm~\ref{alg:apprSVT}.
%As can be seen, the approximation error decreases
%linearly 
%with the number of 
%power iterations $J$.

%\begin{proposition} 
%\label{pr:proxrate}
%Assume that $\eta_t = \frac{\sigma_{k + 1}(\tZ_t)}{\sigma_k (\tZ_t)} \in (0, 1)$, and
%$k_t \ge \breve{k}_t$. 
%Let $h_{\lambda \|\cdot\|_*}(\tilde{X} ; \tZ_t)$ be as defined in \eqref{eq:h}.
%Then,
%\begin{align*} 
%h_{\lambda \|\cdot\|_*}(\tilde{X} ; \tZ_t) 
%\le h_{\lambda \|\cdot\|_*}(\svt_\l(\tZ_t);\tZ_t)
%+ \alpha_t \beta_t \gamma_t \eta_t^J, 
%\end{align*} 
%where
%$\alpha_t = \NM{Q_0 Q_0^{\top} - Q Q^{\top}}{F}$ with 
%$Q_0 = \text{QR}(\tZ_t R_t)$,
%$\beta_t = \NM{\tZ_t}{F}$ and
%$\gamma_t = \max_{G_t} \NM{G_t}{F} : G_t \in \partial  h_{\lambda \|\cdot\|_*} (Q_0 \prox{\l
%\NM{\cdot}{*}}{Q_0^{\top} \tZ_t};\tZ_t)$.
%\end{proposition}

%%%%%%%%%%%%%%%%%%%%%%%%%%%%%%%%%%%%%%%%

\subsection{The Proposed Algorithm}
\label{sec:proalg}

%With preserved ``sparse plus low-rank'' under acceleration (Section~\ref{sec:compSVT}) and approximate SVT (Section~\ref{sec:apprSVT}), we are ready to present 

%In this section, 
We extend
problem~(\ref{eq:mc})  by
allowing
the loss $\ell$
to be 
%convex and 
$\rho$-Lipschitz smooth 
(e.g.,
logistic loss and squared hinge loss):
\begin{align}
\min_X F(X) \equiv
\sum_{(i,j)\in \Omega} \ell(X_{ij}, O_{ij}) + \lambda \NM{X}{*}.
\label{eq:mcgen}
\end{align}
Using \eqref{eq:z2}, 
$\tZ_t 
= Y_t - \mu \nabla f(Y_t)
= Y_t - \mu S_t$,
where $S_t$ is a sparse matrix with
\begin{equation} \label{eq:S}
[S_t]_{ij} = 
\begin{cases}
\frac{d \ell((Y_t)_{ij}, O_{ij}) }{d (Y_t)_{ij}}
& \text{if} \; (i,j) \in \Omega
\\
0
& \; \text{otherwise}
\end{cases}.
\end{equation} 
Using Proposition~\ref{pr:lipmac} and (\ref{eq:stepsize}), 
the stepsize 
$\mu$
can be set as 
$1/\rho$.
The whole procedure is shown in Algorithm~\ref{alg:AISimpute}.
The core steps 
are 6--8, which
performs
approximate SVT.
As in \cite{hsieh2014nuclear},
	$R_t$ is warm-started as $\text{QR}([V_t, V_{t-1}])$ at step~7.
%Then, its approximation error on the proximal step is
%\begin{eqnarray*}
%\epsilon_t \!
%= \! h_{\mu \lambda \NM{\cdot}{*}}(Q_0 \svt_{\mu \lambda}(Q_0^{\top} \breve{Z}_t) ; \! \tZ_t)
% \! - \! h_{\mu \lambda \NM{\cdot}{*}}(\svt_{\mu \lambda}(\breve{Z}_t) ; \! \tZ_t),
%\end{eqnarray*}
%where $Q_0$ is the column span of $\tZ_t R_t$.
%The following
%Proposition shows that
%%such choice can be very useful as 
%$\epsilon_t$ goes to zero.
%\begin{proposition} \label{pr:span}
%%$\lim_{t \rightarrow \infty} \Span{X_t}
%%= \lim_{t \rightarrow \infty} \Span{R_t}
%%= \lim_{t \rightarrow \infty} \Span{V_{\breve{k}_t}}$,
%%and 
%$\lim_{t \rightarrow \infty} \epsilon_t = 0$.
%\end{proposition}
Moroever,
as in \cite{o2012adaptive}, 
we restart
the algorithm 
if $F(X)$ starts to increase
(step~10).
For further speedup, $\lambda$ is dynamically reduced (step~3) by a continuation
strategy \cite{toh2010accelerated,mazumder2010spectral}.
%As in \cite{hsieh2014nuclear},
%we set
%$R_t$ to  
%the column spaces 
%($V_{t}$ and $V_{t-1}$)
%of $X_t$ and $X_{t-1}$ (step~7).

\begin{algorithm}[ht]
	\caption{Accelerated Inexact Soft-Impute (AIS-Impute).}
	\begin{algorithmic}[1]
		\REQUIRE partially observed matrix $O$, parameter $\lambda$.
%		decay parameter $\nu \in (0, 1)$;
%		\STATE $[U_0, \lambda_0, V_0] = \text{rank-1-SVD}(\SO{O})$ and $V_0 = V_1$;
		\STATE initialize $c = 1$, $X_0 = X_1 = 0$, stepsize $\mu = 1 / \rho$,
		$\hat{\lambda} > \lambda$ and $\nu \in (0, 1)$;
		\FOR{$t = 1, 2,\dots, T $}
		\STATE $\lambda_t = (\hat{\lambda} - \lambda) \nu^{t - 1} + \lambda$;
%		\STATE $\theta_t = (c - 1)/(c + 2)$;
		\STATE $Y_t = X_t + \theta_t (X_t - X_{t-1})$, where $\theta_t = \frac{c - 1}{c + 2}$;
		\STATE $\tZ_t = Y_t - \mu S_t$, with $S_t$ in (\ref{eq:S});
		\STATE $V_{t-1} = V_{t-1} - V_t({V_t}^{\top} V_{t-1})$, remove zero columns;
		\STATE $R_t = \text{QR}([V_t, V_{t-1}])$; 
		\STATE $[U_{t+1}, \Sigma_{t+1}, V_{t+1}] = \text{approx-SVT}( \tZ_t, R_t, \mu \lambda_t, J )$; 
		\\ $//$ $X_{t + 1} = U_{t+1} \Sigma_{t+1} V_{t+1}^{\top}$
		
		\STATE \textbf{if} $F(X_{t + 1}) > F(X_{t})$ \textbf{then} $c = 1$;
		\STATE \textbf{else} $c = c + 1$; \textbf{end if}
		
		\ENDFOR
		\RETURN $U_{T + 1}$, $\Sigma_{T+1}$ and $V_{T+1}$.
	\end{algorithmic}
	\label{alg:AISimpute}
\end{algorithm}

%As will be shown in Theorem~\ref{the:AISImpute:conv},
%the sequence $\{X_t\}$ generated from Algorithm~\ref{alg:AISimpute} converges.
%Note that $\alpha_t$ depends on $Q_0$, which contains the span of $\tZ_t R_t$.
%Thus, it represents the approximation error of proximal step,
%i.e.,

%%%%%%%%%%%%%%%%%%%%%%%%%%%%%%%%%%%%%%%%

\subsection{Convergence and Time Complexity}
\label{sec:convtime}

In the following, we will show that 
the proposed algorithm has a 
convergence rate of $O(1/T^2)$.
Let $X_{t + 1} = U_{t + 1} \Sigma_{t + 1} V_{t + 1}^{\top}$
be the output of approx-SVT at step~8.
Since it only approximates $\svt_{\mu\lambda}(\tZ_t)$, there is a difference ($\varepsilon_t$ in
\eqref{eq:err}) between the proximal objectives $h_{\mu \lambda \NM{\cdot}{*}}(X_{t + 1} ; \tZ_t)$ and
$h_{\mu \lambda \NM{\cdot}{*}}(\svt_{\mu \lambda}(\tZ_t) ; \tZ_t)$ after performing step~8, 
where $h_{\mu \lambda \NM{\cdot}{*}}(\cdot;\cdot)$ is as defined in (\ref{eq:h}).
The following shows that $\varepsilon_t$ decreases at a linear rate.
% The proof can be found in Appendix~A.3.

%\begin{proposition} \label{pr:bound}
%Assume that .
%Then $\{ \alpha_t \}, \{ \beta_t \}$ and $\{ \gamma_t \}$ are all upper-bounded by some constants.
%\end{proposition}

\begin{proposition}
\label{pr:inexact}
Assume that
(i) $k_t \ge \breve{k}_t$ for all $t$ and $J = t$;\footnote{In practice, we simply set $J = 3$ as in
\cite{hsieh2014nuclear}.}
(ii) $\{ F(X_t) \}$ is upper-bounded.
Then $\varepsilon_t$ decreases to zero linearly.
\end{proposition}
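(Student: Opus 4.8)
The plan is to show that the error $\varepsilon_t$ in the proximal step is governed by how well the power method captures the relevant singular subspace of $\tZ_t$, and that this approximation error decays geometrically because of the spectral gap at $\lambda$ together with the increasing number of power iterations $J=t$. First I would recall Proposition~\ref{pr:approGSVT}: if $Q \in \R^{m \times k_t}$ is orthogonal and its column span contains the span of the top $\breve{k}_t$ left singular vectors of $\tZ_t$, then $\svt_\l(\tZ_t) = Q\,\svt_\l(Q^\top \tZ_t)$ \emph{exactly}. Consequently, any error can only come from $Q$ (the output of PowerMethod) failing to exactly contain that dominant subspace. So the first step is to write $\varepsilon_t = h_{\mu\lambda\NM{\cdot}{*}}(\tilde X_{t+1};\tZ_t) - h_{\mu\lambda\NM{\cdot}{*}}(\svt_{\mu\lambda}(\tZ_t);\tZ_t)$, where $\tilde X_{t+1} = Q\,\svt_{\mu\lambda}(Q^\top\tZ_t)$, and bound it by the distance between $Q$ and the true top-$\breve k_t$ left singular subspace of $\tZ_t$, measured e.g. by $\NM{(I-QQ^\top) U_{\breve k_t}}{}$ or by the residual $\NM{\tZ_t - QQ^\top\tZ_t}{F}$ restricted to the directions above $\lambda$. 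Because $h_{\mu\lambda\NM{\cdot}{*}}(\cdot;\tZ_t)$ is $1$-strongly convex and $\svt_{\mu\lambda}(\tZ_t)$ is its minimizer, $\varepsilon_t \le \frac12\NM{\tilde X_{t+1} - \svt_{\mu\lambda}(\tZ_t)}{F}^2$, which I would then bound by (a constant times) the squared subspace error of the power method.

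Next I would invoke the standard convergence rate of the randomized/warm-started power method (Algorithm~\ref{alg:powermethod}, as analyzed in \cite{halko2011finding}): after $J$ iterations the approximation error contracts like $(\sigma_{\breve k_t+1}(\tZ_t)/\sigma_{\breve k_t}(\tZ_t))^{2J}$ up to problem-dependent constants. By construction $\sigma_{\breve k_t}(\tZ_t) > \lambda \ge \sigma_{\breve k_t+1}(\tZ_t)$ --- wait, actually we need a \emph{strict} gap, so here I would use $\sigma_{\breve k_t}(\tZ_t) > \mu\lambda_t$ and, using $k_t \ge \breve k_t$ and the thresholding, the effective ratio $\gamma_t \equiv \sigma_{k_t+1}(\tZ_t)/\sigma_{\breve k_t}(\tZ_t) < 1$; since the SVT only keeps components above $\mu\lambda_t$, the contribution of the tail below $\mu\lambda_t$ to $\svt$ is zero, so only the first $\breve k_t$ directions matter and the relevant ratio is bounded away from $1$ uniformly once $\lambda_t$ stabilizes (via the continuation schedule $\lambda_t \to \lambda$). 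Combining, $\varepsilon_t \le C\,\gamma^{2J} = C\,\gamma^{2t}$ for some $\gamma \in (0,1)$ and constant $C$, which is linear (geometric) decay. The upper-boundedness assumption (ii) on $\{F(X_t)\}$ enters here to keep $\NM{\tZ_t}{F}$, hence the constant $C$ and the top singular value, uniformly bounded across $t$ (so that $C$ does not blow up); I would spell out that $\NM{Y_t}{F}$ and $\NM{S_t}{F}$ stay bounded, hence so does $\NM{\tZ_t}{F}$.

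The main obstacle will be handling the spectral gap cleanly: the power-method contraction factor $\sigma_{k_t+1}(\tZ_t)/\sigma_{\breve k_t}(\tZ_t)$ must be shown to be bounded away from $1$ \emph{uniformly in $t$}, and $\sigma_{\breve k_t}(\tZ_t)$ must be bounded away from the threshold $\mu\lambda_t$ --- otherwise the geometric rate degenerates. I would argue this using the continuation strategy: since $\lambda_t = (\hat\lambda-\lambda)\nu^{t-1}+\lambda$ decreases to $\lambda$, and since $\breve k_t$ is \emph{defined} as the count of singular values strictly exceeding the threshold, there is an intrinsic separation; a careful statement is that for the purposes of $\svt$, singular values equal to (or within the relevant tolerance of) the threshold contribute negligibly, so one may take the effective gap to be between $\sigma_{\breve k_t}$ and $\sigma_{\breve k_t + 1} \le \mu\lambda_t$, and absorb any borderline case into the error constant. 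The secondary technical point --- that the warm start $R_t = \mathrm{QR}([V_t,V_{t-1}])$ does not spoil the random-start guarantee --- I would dispatch by noting it only improves the projection of the start matrix onto the target subspace (or, conservatively, fall back to the worst-case bound with a generic well-conditioned start), so the geometric rate is preserved. The rest is routine: assemble $\varepsilon_t \le C\gamma^{2t}$ and conclude linear decrease to zero.
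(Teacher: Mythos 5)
Your overall architecture matches the paper's: reduce $\varepsilon_t$ to the subspace error of the power method via Proposition~\ref{pr:approGSVT}, invoke the $(\sigma_{k+1}/\sigma_k)^J$ contraction from \cite{halko2011finding}, and use assumption (ii) to keep $\NM{\tZ_t}{F}$ and hence all constants uniformly bounded. However, there is a genuine error in the key intermediate step. You claim that because $h_{\mu\lambda\NM{\cdot}{*}}(\cdot\,;\tZ_t)$ is $1$-strongly convex with minimizer $Z_t^*=\svt_{\mu\lambda}(\tZ_t)$, one has $\varepsilon_t \le \frac{1}{2}\NM{\tilde{X}_{t+1}-Z_t^*}{F}^2$. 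Strong convexity gives the \emph{opposite} inequality: $h(x)-h(x^*)\ge \frac{1}{2}\NM{x-x^*}{}^2$. An upper bound of that quadratic form would require $h$ to be smooth, which it is not (it contains the nuclear norm); a one-dimensional counterexample is $h(x)=\frac{1}{2}x^2+|x|$ with $x^*=0$, where $h(x)-h(0)=\frac{1}{2}x^2+|x| > \frac{1}{2}x^2$ for $x\neq 0$. The paper instead bounds the suboptimality by convexity, $h(\tilde X)-h(Z_t^*)\le \tr{(\tilde X - Z_t^*)^{\top}G_t}\le \NM{G_t}{F}\,\NM{\tilde X - Z_t^*}{F}$ with $G_t\in\partial h(\tilde X)$, and then shows $\NM{G_t}{F}\le\gamma_t$ using the structure of the nuclear-norm subdifferential ($\NM{UV^{\top}+W}{F}\le 2\sqrt{m}$) plus the boundedness of $\NM{\tilde X-\tZ_t}{F}$; this yields a bound \emph{linear} in $\NM{\tilde X-Z_t^*}{F}$, namely $\varepsilon_t\le\alpha_t\beta_t\gamma_t\eta_t^J$, which still decays geometrically when $J=t$. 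So your conclusion survives, but only after replacing the strong-convexity step with a subgradient (or Lipschitz-continuity-of-$g$) argument.

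Two smaller remarks. First, your worry about the spectral gap $\eta_t=\sigma_{k_t+1}(\tZ_t)/\sigma_{k_t}(\tZ_t)$ being uniformly bounded away from $1$ is legitimate; the paper simply sets $\eta=\max_t\eta_t$ and asserts $\eta\in(0,1)$ without further justification, so you are not missing anything the paper supplies --- but your proposed fix via the continuation schedule is not actually needed for (nor used in) the paper's argument. Second, the bound you need from the power method is on $\NM{Q_JQ_J^{\top}-U_kU_k^{\top}}{F}$ (projector distance), which combined with the non-expansiveness of SVT (Lemma~\ref{lem:expaness}) gives $\NM{\tilde X - Z_t^*}{F}\le\NM{U_kU_k^{\top}-QQ^{\top}}{F}\NM{\tZ_t}{F}$; this chain, which you gesture at but do not carry out, is what makes the reduction to the subspace error precise.
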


Using Propositions~\ref{cor:iapg:require} and \ref{pr:inexact}, 
convergence of the proposed algorithm is provided 
by the following Theorem.
% The proof can be found in Appendix~A.4.

\begin{theorem}
\label{the:AISImpute:conv}
The sequence $\{X_t\}$ generated from
Algorithm~\ref{alg:AISimpute} converges to the optimal solution with a $O(1/T^2)$ rate.
\end{theorem}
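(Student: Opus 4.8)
The plan is to recognize Algorithm~\ref{alg:AISimpute} as an instance of the inexact accelerated proximal gradient method of \cite{schmidt2011convergence} applied to \eqref{eq:mcgen} written as $F(X) = f(X) + g(X)$ with $f(X) = \sum_{(i,j)\in\Omega}\ell(X_{ij},O_{ij})$ as in \eqref{eq:f} and $g(X) = \lambda\NM{X}{*}$ (both convex, $g$ nonsmooth), and then to invoke Proposition~\ref{cor:iapg:require}. By Proposition~\ref{pr:lipmac}, $f$ is $\rho$-Lipschitz smooth, so the choice $\mu = 1/\rho$ in the algorithm meets the step-size condition \eqref{eq:stepsize}; moreover the updates $Y_t = X_t + \theta_t(X_t - X_{t-1})$ (step~4) and $\tZ_t = Y_t - \mu S_t$ (step~5) are exactly the accelerated updates \eqref{eq:y}--\eqref{eq:z2}, with $\theta_t$ of the prescribed form (modulo the restart counter $c$). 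Thus the only thing to control is the pair of error sources in the inexact scheme: the error $e_t$ in evaluating $\nabla f(Y_t)$, and the error $\varepsilon_t$ in the proximal step \eqref{eq:err}.

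For the gradient error, $\nabla f(Y_t)$ is precisely the sparse matrix $S_t$ of \eqref{eq:S}, which is computed \emph{exactly} at step~5. Hence $e_t = 0$ for all $t$, and trivially $\|e_t\| = O(1/t^{2+\delta})$ for every $\delta > 0$. For the proximal error, step~8 returns $X_{t+1} = \text{approx-SVT}(\tZ_t, R_t, \mu\lambda_t, J)$, which by Proposition~\ref{pr:approGSVT} coincides with $\svt_{\mu\lambda_t}(\tZ_t) = \prox{\mu g}{\tZ_t}$ as soon as the power-method subspace $Q$ captures the top $\breve{k}_t$ left singular vectors of $\tZ_t$, and otherwise differs from it exactly by the quantity $\varepsilon_t$ in \eqref{eq:err}. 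Proposition~\ref{pr:inexact} — using $k_t \ge \breve{k}_t$, $J = t$, and the boundedness of $\{F(X_t)\}$ — gives $\varepsilon_t \le c\,\gamma^{t}$ for some $c > 0$, $\gamma \in (0,1)$, and since a geometrically decaying sequence decays faster than any polynomial, $\sqrt{\varepsilon_t} = O(1/t^{2+\delta})$ for every $\delta > 0$. With both $\|e_t\|$ and $\sqrt{\varepsilon_t}$ decaying as required, Proposition~\ref{cor:iapg:require} applies and yields the $O(1/T^2)$ rate.

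To make Proposition~\ref{pr:inexact} legitimately applicable, two auxiliary points need to be discharged. First, its boundedness hypothesis on $\{F(X_t)\}$: this follows from coercivity of $F$ (the loss $\ell$ is bounded below and the term $\lambda\NM{X}{*}$ is unbounded on unbounded sets) together with the restart rule at steps~9--10, which forces $F$ to be essentially non-increasing across restart epochs, so the iterates stay in a bounded sublevel set of $F$. Second, the continuation on $\lambda$ at step~3 drives $\lambda_t = (\hat\lambda - \lambda)\nu^{t-1} + \lambda \to \lambda$ geometrically; the discrepancy it introduces relative to the limit regularizer $\lambda\NM{\cdot}{*}$ decays geometrically and can therefore be absorbed into $\varepsilon_t$ without spoiling its $O(1/t^{2+\delta})$ behaviour, so neither continuation nor restarting changes the asymptotic rate.

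I expect the main obstacle to lie in Proposition~\ref{pr:inexact} rather than in Theorem~\ref{the:AISImpute:conv} itself: one must show that the power method, warm-started by $R_t = \text{QR}([V_t, V_{t-1}])$ and run for $J = t$ steps, shrinks the subspace error — and hence $\varepsilon_t$ — at a linear rate, which hinges on controlling the relevant singular-value gaps via the $F(X_t)$-bound and on the captured rank satisfying $k_t \ge \breve{k}_t$. Granting Propositions~\ref{cor:iapg:require} and \ref{pr:inexact}, the remainder of the argument for Theorem~\ref{the:AISImpute:conv} is essentially bookkeeping: confirming $e_t = 0$, matching the algorithm's steps to the accelerated inexact template, and verifying that continuation and restart are compatible with that framework.
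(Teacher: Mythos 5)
Your proposal is correct and follows essentially the same route as the paper: the paper's proof simply observes that the gradient is computed exactly ($e_t=0$), that Proposition~\ref{pr:inexact} gives linear decay of $\varepsilon_t$ (hence faster than any $O(1/t^{2+\delta})$), and then invokes Proposition~\ref{cor:iapg:require}. Your additional remarks on boundedness of $\{F(X_t)\}$, continuation, and restarting go beyond what the paper writes down (it simply inherits these as assumptions of Proposition~\ref{pr:inexact}), but they do not change the argument.
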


%Empirically, we set the maximum number of power iterations to $3$ (i.e., $J = 3$).
The basic operations in the power method are multiplications of the form $\tZ_t u$ and
$v^{\top} \tZ_t$.  The tricks in Section~\ref{sec:compSVT}
can again be used for acceleration,
and computing the approximate SVT using Algorithm~\ref{alg:apprSVT}
takes only
\begin{align}
O( k_t \NM{\O}{1} + (r_{t-1} + r_t)k_t m)
\label{eq:timeinexact}
\end{align}
time.
This is slightly more expensive than 
\eqref{eq:timesi-svd}, the time for 
performing exact SVD in Soft-Impute.  However, 
Soft-Impute is not accelerated and has slower convergence than 
Algorithm~\ref{alg:AISimpute} (Theorem~\ref{the:AISImpute:conv}).
The 
complexity 
in \eqref{eq:timeinexact} is also 
the same 
as \eqref{eq:timeexact}.
%which uses acceleration and exact SVD.
However,
as will be demonstrated in Section~\ref{sec:matcomp:syn},
approximate SVT is empirically much faster.
The cost of one 
AIS-Impute 
iteration 
is summarized in Table~\ref{tab:itertime}.
This is only slightly more expensive than \eqref{eq:time-si} for Soft-Impute.

\begin{table}[ht]
\centering
\vspace{-10px}
\caption{Iteration time complexity of Algorithm~\ref{alg:AISimpute}.}
\vspace{-10px}
\label{tab:itertime}
\begin{tabular}{ c | c }
	\hline
	steps                  & complexity \\ \hline
	5   (construct  $S_t$) & $O(r_t \NM{\Omega}{1})$                                      \\ \hline
	6,7  (warm-start)      & $O(n k_t^2)$                                                 \\ \hline
	8  (approximate SVT)   & $O(k_t \|\O\|_1 + \left( r_{t-1} + r_t\right) k_t m)$        \\ \hline\hline
	total                  & $O((r_t + k_t)\NM{\Omega}{1} + (r_{t-1} + r_t + k_t) k_t m)$ \\ \hline
\end{tabular}
%\vspace{-5px}
\end{table}

Table~\ref{tab:algs:others} compares
Algorithm~\ref{alg:AISimpute} with some existing algorithms that
will be empirically compared in Section~\ref{sec:recsys}.
%Note that R1MP \cite{wang2015orthogonal} is a greedy algorithm, which does not have explicit regularization. 
%Its convergence rate  is for training loss, thus it over-fits easily (see Section~\ref{sec:recsys}).
Overall, Algorithm~\ref{alg:AISimpute} enjoys fast convergence and low iteration complexity.

\begin{table*}[ht]
\centering
\vspace{-5px}
\caption{Comparison of AIS-Impute (Algorithm~\ref{alg:AISimpute}) with other algorithms.
	The algorithms in \cite{hsieh2014nuclear,mishra2013low,zhang2012accelerated}
	involve solving some optimization subproblems iteratively, 
	and 
	$T_a$ is the number of iterations used.
	Moreover, integer $T_s$ and $c_1, c_2 \in (0, 1)$ are some constants.}
\vspace{-10px}
\begin{tabular}{c | c| c|c}
\hline
                  & algorithm                                         & iteration complexity                                         &        rate        \\ \hline
matrix factorization & LMaFit \cite{wen2012solving}                      & $O(\NM{\Omega}{1} k_t + m k_t)$                              &        ---         \\ \cline{2-4}
                  & ASD \cite{tanner2016low}                          & $O(\NM{\Omega}{1} k_t + m k_t)$                              &        ---         \\ \cline{2-4}
                  & R1MP \cite{wang2015orthogonal}                    & $O(\NM{\Omega}{1} +  m k_t^2)$                               &    $O(c_1^{T})$    \\ \hline
 nuclear norm     & active subspace selection \cite{hsieh2014nuclear} & $O(\NM{\Omega}{1} k_t^2 T_a)$                                & $O(c_2^{T - T_s})$ \\ \cline{2-4}
 minimization     & boost \cite{zhang2012accelerated}                 & $O(\NM{\Omega}{1} t^2 T_a)$                                  &      $O(1/T)$      \\ \cline{2-4}
                  & Sketchy \cite{yurtsever2017sketchy}               & $O(\NM{\Omega}{1} + m k_t^2)$                                &      $O(1/T)$      \\ \cline{2-4}
                  & TR \cite{mishra2013low}                           & $O(\NM{\Omega}{1} t^2 T_a)$                                  &        ---         \\ \cline{2-4}
                  & ALT-Impute \cite{hastie2015matrix}                & $O(\NM{\Omega}{1} k_t + m k_t^2)$                            &      $O(1/T)$      \\ \cline{2-4}
                  & SSGD \cite{avron2012efficient}                    & $O(m k_t^2)$                                                 &  $O(1/\sqrt{T})$   \\ \cline{2-4}
                  & AIS-Impute                                        & $O((r_t + k_t)\NM{\Omega}{1} + (r_{t-1} + r_t + k_t) m k_t)$ &     $O(1/T^2)$     \\ \hline
\end{tabular}
\label{tab:algs:others}
\vspace{-15px}
\end{table*}

%%%%%%%%%%%%%%%%%%%%%%%%%%%%%%%%%%%%%%%%

\subsection{Post-Processing}
\label{sec:postmc}

%Finally, we proposed a post-processing method to boost recover performance.
The nuclear norm penalizes all singular values equally. This
may over-penalize the more important leading 
singular values.
To alleviate this problem,
we post-process the solution as in \cite{mazumder2010spectral}.
Note that only the square loss is considered 
in \cite{mazumder2010spectral}.
Here,
any smooth convex loss can be used.

Let the rank-$k$ matrix obtained from Algorithm~\ref{alg:AISimpute} be $X=U \Sigma
V^{\top}$, where $U = [u_i]$ and $V = [v_i]$.
Let $A(\theta) = U \Diag{\theta} V^{\top}$.
We undo part of the shrinkage on the singular values by replacing
$X$ with $A(\theta_*)$, where
\begin{equation}
\theta_* 
= \arg\min_{\theta} \phi(\theta)
\equiv \sum_{(i,j)\in\Omega} \ell( A(\theta)_{ij} , O_{ij} ).
\label{eq:post:mc}
\end{equation}
When $\ell$ is the square loss,
\eqref{eq:post:mc} 
has a closed-form solution 
\cite{mazumder2010spectral}.
However,
for smooth convex $\ell$ in general,
this is not possible 
and we optimize \eqref{eq:post:mc} using L-BFGS.
The most expensive step in each
L-BFGS iteration is the computation of the gradient $\nabla \phi(\theta) \in \R^k$, where
$[\nabla \phi(\theta)]_i
= u_i^{\top} B v_i$,
$B_{ij} = 
\frac{d \ell( A(\theta)_{ij}, O_{ij} )}{d A(\theta)_{ij}}$ if
$(i,j) \in \Omega$, and 
0 
otherwise.
As $S$ is sparse, 
computing $\nabla \phi(\theta)$ only takes $O(k \NM{\Omega}{1})$ time where $k \ll n$.
%\footnote{$\surd$ see blue text.
%	*** imprecise. \eqref{eq:post:mc} is nonlinear opt in general}
Thus,
one iteration of L-BFGS takes $O(k \NM{\Omega}{1})$ time,
which is not significant compared to the $O((r_t + k_t)\NM{\Omega}{1} + (r_{t-1} + r_t + k_t) k_t m)$ 
time in each AIS-Impute iteration (Table~\ref{tab:itertime}).
Moreover, L-BFGS has superlinear convergence \cite{nocedal1999numerical}.
Empirically, it converges in fewer than 10 iterations.
These make post-processing very efficient.

%Thus,
%post-processing takes $O( k \NM{\Omega}{1} )$ time,
%which is comparable to the per-iteration complexity of AIS-Impute (see ), and
%is very efficient.

\subsection{Nonconvex Regularization}

While post-processing alleviates the problem of over-penalizing singular values,
recently nonconvex regularizers have been proposed to address this problem in a more direct manner.
In this section,
we first show that the proposed algorithm can be extended for truncated nuclear norm regularization (TNNR) \cite{hu2013fast},
which is a popular nonconvex variant of the nuclear norm.
Then
we show that it can be further extended for more general nonconvex regularizers.

\noindent
\textbf{Truncated Nuclear Norm.}
The optimization problem 
for TNNR \cite{hu2013fast}
can be written as
\begin{align}
\min_{X} \frac{1}{2} \NM{ \SO{X - O} }{F}^2
+ \lambda \sum_{i = r}^{m}  \sigma_i(X),
\label{eq:tnnr}
\end{align}
where $r \in \{1, \dots , m \}$.
Using DC programming
\cite{tao2005dc},
this is rewritten 
as
\begin{align}
\min_{X} \!\!
\max_{A \in \R^{m \times r}, B \in \R^{n \times r}} 
&
\frac{1}{2} \NM{ \SO{X \! - \! O} }{F}^2
\! + \! \lambda \NM{X}{*} \! - \! \lambda \tr{A^{\top} X B }
\notag
\\
\text{s.t.}
& 
\quad
A^{\top} A = I, \;
B^{\top} B = I.
\label{eq:tnnr:equiv}
\end{align} 
$A,B$ and $X$ are then obtained 
using alternative minimization
as
\begin{align}
( A_{\tau + 1}, B_{\tau + 1} ) 
= & \min_{A^{\top} A = I,	B^{\top} B = I} \tr{A^{\top} X_{\tau} B },
\label{eq:tnnr:1}
\\
X_{\tau + 1} 
= & \min_{X} \frac{1}{2} \NM{ \SO{X - O} }{F}^2 + \lambda \NM{X}{*}
\label{eq:tnnr:2}
\\
& \qquad - \lambda \tr{A_{\tau + 1}^{\top} X B_{\tau + 1} }.
\notag
\end{align}
Subproblem~\eqref{eq:tnnr:1}
has
the closed-form solution 
$A_{\tau + 1} = U_{\tau}$ and $B_{\tau + 1} = V_{\tau}$ \cite{hu2013fast},
where $U_{\tau} \Sigma_{\tau} V_{\tau}^{\top}$
is the rank-$r$ SVD of $X_{\tau}$.
Subproblem 
\eqref{eq:tnnr:2} involves convex optimization 
with the nuclear norm regularizer,
%the proximal algorithm can be applied.
and is
solved by the accelerated proximal gradient (APG) algorithm \cite{beck2009fast} 
in \cite{hu2013fast}.

The proposed AIS-Impute can be used to solve \eqref{eq:tnnr:2} more efficiently.
Let $X_{t - 1}$ and $X_t$ be two consecutive iterates from
AIS-Impute.
As in Section~3.2,
in order to generate $X_{t + 1}$,
we compute
\begin{align}
Y_t 
& = ( 1 + \theta_t ) X_t - \theta_t X_{t - 1},
\label{eq:defy}
\\
\breve{Z}_t
& = Y_t
+ \mu \lambda A_{\tau + 1} B_{\tau + 1}^{\top}
- \mu \SO{Y_t - O}.
\notag
\end{align}
Note that 
$Y_t + \mu \lambda A_{\tau + 1} B_{\tau + 1}^{\top}$ is low-rank and $\mu \SO{Y_t - O}$ is sparse.
Thus, $\breve{Z}_t$ again has the special ``sparse plus low-rank'' structure which is key to AIS-Impute.
Each AIS-Impute iteration then takes $O((r_t + k_t)\NM{\Omega}{1} + (r_{t-1} + r_t + k_t) k_t m)$ time,
which is much cheaper than the $O(m n k)$ time for APG.

\noindent
\textbf{Other Nonconvex Low-rank Regularizers.}
Assume that
the regularizer
is of the form $r(X) = \sum_{i = 1}^m \bar{r}\left( \sigma_i ( X ) \right)$,
where $\bar{r}(\alpha)$ is a concave and nondecreasing function on $\alpha \ge 0$.
%the nonconvex penalties such as 
This assumption is satisfied by 
the log-sum-penalty \cite{candes2008enhancing},
minimax concave penalty \cite{zhang2010nearly},
and capped-$\ell_1$ norm \cite{zhang2010analysis}.
The corresponding optimization problem is
$\min_{X}
\frac{1}{2} \NM{\SO{X - O}}{F}^2
+ \lambda r(X)$.
As in \cite{hu2013fast},
using DC programming,
we obtain
\begin{align}
\!\!
X_{\tau + 1} & = \min_{X} \frac{1}{2} \NM{ \SO{X \! - \! O} }{F}^2 
\! + \! \lambda \sum_{i = 1}^{m} ( w_{\tau + 1} )_i \sigma_i(X), \label{eq:irnn:2}
\\
\!\!
\left[w_{\tau + 1}\right]_i
& = \hat{\partial}  \bar{r} \left( \sigma_i (X_{\tau}) \right), \;\;
i = 1, \dots, m,
\label{eq:irnn:1}
\end{align}
where $\hat{\partial} \bar{r}$ is the super-gradient 
\cite{boyd2009convex}
of $\bar{r}$.
Subproblem~\eqref{eq:irnn:1} can be easily computed in $O(m)$ time.
As for \eqref{eq:irnn:2},
its regularizer is a weighted nuclear norm.
As $\bar{r}$ is non-decreasing and concave,
$\left( w_{\tau + 1} \right)_1 \le \left( w_{\tau + 1} \right)_2 \le \cdots \le
\left( w_{\tau + 1} \right)_{m}$
\cite{lu2016nonconvex}.
The following Lemma shows that the proximal step  
in (\ref{eq:irnn:2})
has a closed-form solution.

\begin{lemma}[\cite{lu2016nonconvex,gu2017weighted}] \label{lem:irnn}
	Let the SVD of $Z$ be  $U \Sigma V^{\top}$ and 
	%a vector $w$ with 
	$0 \le w_1 \le w_2 \le
	\cdots \le w_m$. 
	The solution of
	the proximal step $\min_{X} \frac{1}{2}\NM{X - Z}{F}^2 + \lambda \sum_{i = 1}^{m} w_i \sigma_i(X)$
	is given by 
	%$X^* = 
	$U \left[  \Sigma - \lambda \text{Diag}(w_1, \dots, w_m) \right]_+ V^{\top}$.
\end{lemma}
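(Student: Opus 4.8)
The plan is to reduce the matrix proximal problem to a separable problem on the singular values: von Neumann's trace inequality lets me fix the singular vectors, and the ordering $0 \le w_1 \le \cdots \le w_m$ then guarantees that per-coordinate soft-thresholding automatically respects the monotonicity of the singular values.

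First I would expand
$\frac{1}{2}\NM{X - Z}{F}^2 = \frac{1}{2}\sum_i \sigma_i(X)^2 - \tr{X^{\top} Z} + \frac{1}{2}\sum_i \sigma_i(Z)^2$
and invoke von Neumann's trace inequality $\tr{X^{\top} Z} \le \sum_i \sigma_i(X)\sigma_i(Z)$, with equality exactly when $X$ and $Z$ admit a simultaneous SVD, i.e.\ $X = U\,\Diag{d}\,V^{\top}$ with $U,V$ the singular-vector matrices of $Z$ and $d_1 \ge d_2 \ge \cdots \ge 0$ arranged in the same descending order as the diagonal of $\Sigma$. Since the regularizer $\sum_i w_i \sigma_i(X)$ depends on $X$ only through its singular values, for any fixed choice of those singular values the objective is smallest when the singular vectors of $X$ coincide with those of $Z$. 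Hence a minimizer has the form $X^{\star} = U\,\Diag{d^{\star}}\,V^{\top}$ and the problem collapses to the scalar program
\[
d^{\star} = \arg\min_{d_1 \ge d_2 \ge \cdots \ge d_m \ge 0}\ \sum_{i=1}^{m} \Big[ \tfrac{1}{2}\big( d_i - \sigma_i(Z) \big)^2 + \lambda w_i d_i \Big].
\]

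Next I would drop the ordering constraint and minimize each term separately over $d_i \ge 0$; the minimizer of $\tfrac{1}{2}(d_i - \sigma_i(Z))^2 + \lambda w_i d_i$ is $\hat d_i = \big[ \sigma_i(Z) - \lambda w_i \big]_+$. The crucial step is checking that $\hat d$ satisfies the dropped constraint: because $\sigma_1(Z) \ge \sigma_2(Z) \ge \cdots$ while $w_1 \le w_2 \le \cdots$, we get $\sigma_i(Z) - \lambda w_i \ge \sigma_{i+1}(Z) - \lambda w_{i+1}$ for every $i$, and applying the nondecreasing map $x \mapsto [x]_+$ yields $\hat d_i \ge \hat d_{i+1} \ge 0$. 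Therefore $\hat d$ is feasible, so $d^{\star} = \hat d$ and $X^{\star} = U\big[ \Sigma - \lambda\,\Diag{w_1,\dots,w_m} \big]_+ V^{\top}$, which is exactly the claimed solution.

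The main obstacle I expect is making the ``fix the singular vectors'' reduction fully rigorous --- in particular the equality condition in von Neumann's trace inequality, and the case of repeated singular values of $Z$, where the singular-vector matrices are not unique. There one argues that every optimizer has the singular values $\hat d$ computed above and that the displayed matrix realizes them, so it is one optimizer among possibly several. By contrast, the monotonicity computation --- the only place the hypothesis $0 \le w_1 \le \cdots \le w_m$ is used --- is short, and it also makes clear why the closed form breaks down for unsorted weights, since then $\hat d$ can violate $d_i \ge d_{i+1}$.
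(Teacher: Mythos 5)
Your proof is correct. The paper does not prove Lemma~\ref{lem:irnn} itself --- it imports the result from \cite{lu2016nonconvex,gu2017weighted} --- and your argument (reduce to the singular values via von Neumann's trace inequality, solve the relaxed separable problem by per-coordinate soft-thresholding, then verify that the ordering constraint $d_1 \ge \cdots \ge d_m \ge 0$ is automatically satisfied because $\sigma_i(Z)$ is nonincreasing while $w_i$ is nondecreasing) is exactly the standard proof given in those references. Your closing remarks about the equality case of von Neumann and about non-uniqueness under repeated singular values correctly identify the only delicate points, and your observation that the closed form fails for unsorted weights is precisely why the hypothesis $0 \le w_1 \le \cdots \le w_m$ is needed.
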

%The proximal step computed using Lemma~\ref{lem:irnn}.
Similar to the truncated nuclear norm, 
we have
$\tZ_t  =  \SO{O - Y_t} + Y_t$,
where 
$\SO{O - Y_t}$ is sparse and 
$Y_t$ (defined in \eqref{eq:defy})
is low-rank.
Hence, we
again have the special ``sparse plus low-rank'' structure.
AIS-Impute algorithm can still be used
and one iteration takes $O((r_t + k_t)\NM{\Omega}{1} + (r_{t-1} + r_t + k_t) k_t m)$ time.

%%%%%%%%%%%%%%%%%%%%%%%%%%%%%%%%%%%%%%%%%%%%%%%%%%%%%%%%%%%%%%%%%%%%%%%%%

\section{Tensor Completion}
\label{sec:tensor}

%which are multidimensional arrays describing the linear and multilinear relationships in the data.  For example, in remote sensing applications, a hyperspectral image with multiple bands can be represented as a $3$-dimensional tensor  (where two of the dimensions are spatial while the third one is for different bands).  As some band information may be missing due to sensor problems,  tensor completion (analogous to matrix completion) is useful in this context.
%Thus, besides completion of low-rank matrices, there is growing interests of completion of low-rank tensors.

Complicated data objects can often be arranged as tensors.
In this section,
we extend the proposed Algorithm~\ref{alg:AISimpute} in Section~\ref{sec:alg} from matrices to tensors.

\subsection{Tensor Model}

The nuclear norm can be defined on tensors in various ways.
The following two are the most popular.

\begin{definition}[\cite{tomioka2010estimation}] 
\label{def:tensor_norm} 
For a $D$-order tensor $\tX$,
the {\em overlapped nuclear norm\/} is
$\NM{\tX}{\text{overlap}} = \sum_{d=1}^D \lambda_d \NM{\tX_{\left\langle d \right\rangle}}{*}$,
and
the {\em scaled latent nuclear norm\/} is
$\NM{\tX}{\text{scaled}}
= \min_{ \tX^{1}, \dots, \tX^{D}
	\;:\; \sum_{d=1}^D\tX^d = \tX}
\sum_{d=1}^D
\lambda_d \NM{\tX^d_{\left\langle d \right\rangle}}{*}$.
Here, $\lambda_d \ge 0$'s are hyperparameters. 
\end{definition}
The overlapped nuclear norm regularizer penalizes nuclear norms on all modes. 
%On the other hand, the latent nuclear norm regularizer is more appropriate when the target tensor can be decomposed into a set of tensors, each of which is low-rank in a specific mode.  
When only several modes are low-rank, 
decomposition with the scaled latent nuclear norm has
better 
generalization
\cite{tomioka2010estimation,wimalawarne2014multitask}. 
In this paper, 
we focus on 
%the $D$-mode tensor completion problems and 
the scaled latent nuclear norm regularizer.

%Analog to matrix completion,
Given a partially observed tensor $\ten{O} \in \R^{I_1 \times \dots \times I_D}$, 
with the observed entries indicated by $\Omega \in \{0,1\}^{I_1 \times \dots \times I_D}$.
The tensor completion problem 
can be formulated as
\begin{align}
& \min_{\tX^1, \dots, \tX^D} 
F( [ \ten{X}^1_t, \dots ,\ten{X}^D_t ] ) 
\label{eq:tc} \\
& \equiv \!\!\!\!\!\!
\sum_{(i_1, \dots, i_D) \in \Omega} \ell( \sum_{d = 1}^D \tX^d_{i_1 \dots i_D}, \ten{O}_{i_1 \dots i_D} ) 
+ \sum_{d = 1}^D \lambda_d \NM{\tX^d_{\ip{d}}}{*}.   
\notag
\end{align}
The recovered tensor is $\tX = \sum_{d = 1}^D \tX^d$.
In \cite{tomioka2010estimation,liu2013tensor},
problem~\eqref{eq:tc} is solved using ADMM \cite{Boyd2011admm}.  
However,
the ADMM update involves SVD in each iteration, 
which takes $O( \prod_{d = 1}^D I_d \sum_{d = 1}^D I_d )$ time and can be expensive.

%In the following, we generalize Algorithm~\ref{alg:AISimpute} to solve \eqref{eq:tc} more efficiently.

\subsection{Generalizing SVT}
\label{sec:extprox}

In \eqref{eq:tc},
let 
\begin{eqnarray}
\!\!\!\!\!\! f([\tX^1, \dots, \tX^D]) 
\!\!\! & = & \!\!\!\!\!\!\!\!\!\!\!\! \sum_{(i_1,\dots,i_D) \in \Omega} 
\ell( \sum_{d = 1}^D \tX^d_{i_1\dots i_D}, \ten{O}_{i_1\dots i_D} )\!\!,
\label{eq:ften}\\
\!\!\!\!\!\! g([\tX^1, \dots, \tX^D]) 
\!\!\! & = & \!\!\! \sum_{d = 1}^D \lambda_d \NM{\tX^d_{\ip{d}}}{*}.
\label{eq:gten}
\end{eqnarray}
The iterates in Algorithm~\ref{alg:AISimpute} are generated by SVT.  
As there are 
multiple nuclear norms in 
(\ref{eq:gten}),
the following
extends SVT  for this case.
%To avoid computation of SVD as in \cite{tomioka2010estimation,liu2013tensor}, 
%We will generalize the techniques on the SVT step in Section~\ref{sec:apprSVT}.

%to Proposition~\ref{pr:gsvt} to handle \eqref{eq:tc}.
As $g$ in \eqref{eq:gten} is separable w.r.t. 
$\tX^i$'s,
one can compute the proximal step for each 
$\tX^i$
separately \cite{parikh2014proximal}.
Updates (\ref{eq:y}), (\ref{eq:z2}) 
in the APG 
become
\begin{eqnarray}
\ten{Y}_t^d & = & (1 + \theta_t) \tX^d_t - \theta_t \tX^d_{t - 1}, \nonumber\\
\breve{\ten{Z}}^d_t & = & \ten{Y}_t^d 
- \mu \ten{S}_t
= (1 + \theta_t) \tX^d_t - \theta_t \tX^d_{t - 1} - \mu \ten{S}_t,
\label{eq:tcacc}
\end{eqnarray}
for $d=1, \dots, D$,
where 
$\ten{S}_t$ is a sparse tensor with
\begin{eqnarray}
(\ten{S}_t)_{i_1 \dots i_D}
= 
\begin{cases}
\frac{d \ell((\hat{\ten{Y}}_t)_{i_1 \dots i_D}, \ten{O}_{i_1 \dots i_D} ) }
{d (\hat{\ten{Y}}_t)_{i_1 \dots i_D}}
& \text{if\;} (i_1, \dots, i_D) \in \Omega
\\
0
&
\text{otherwise}
\end{cases},
\label{eq:tcaccs}
\end{eqnarray}
and $\hat{\ten{Y}}_t = \sum_{d = 1}^D \ten{Y}_t^d$.
Lemma~\ref{lem:svt} is also extended 
to $[\tX^1_{t + 1}, \dots, \tX^D_{t + 1}] = \prox{\mu g}{[\breve{\ten{Z}}_t^1, \dots, \breve{\ten{Z}}_t^D]}$
as follows.
% The proof can be found in Appendix~A.5.

\begin{proposition} \label{pr:gsvt}
$( \tX^d_{t + 1} )_{\ip{d}} = \svt_{\mu\lambda_d \NM{\cdot}{*}}((\breve{\ten{Z}}_t^d)_{\ip{d}} )$.
\end{proposition}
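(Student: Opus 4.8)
The plan is to combine two elementary structural facts: the block-separability of the regularizer $g$ in \eqref{eq:gten}, and the fact that, for each $d$, the mode-$d$ matricization $\tW \mapsto \tW_{\ip{d}}$ is a linear bijection from $\R^{I_1 \times \cdots \times I_D}$ onto $\R^{I_d \times I_{D\backslash d}}$ that preserves the Frobenius norm (it only reindexes entries, so it is an isometry). Together these reduce the tensor proximal step to $D$ decoupled \emph{matrix} proximal steps, each of which is solved by Lemma~\ref{lem:svt}.

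First I would invoke block-separability. Since $g([\tX^1,\dots,\tX^D]) = \sum_{d=1}^D \lambda_d \NM{\tX^d_{\ip{d}}}{*}$ depends on the blocks $\tX^1,\dots,\tX^D$ through disjoint groups of variables, and the quadratic coupling term also splits as $\frac{1}{2}\sum_{d} \NM{\tX^d - \breve{\ten{Z}}_t^d}{F}^2$, the proximal operator separates across blocks (cf.\ \cite{parikh2014proximal}):
\begin{align*}
\tX^d_{t+1} = \arg\min_{\tW} \frac{1}{2}\NM{\tW - \breve{\ten{Z}}_t^d}{F}^2 + \mu\lambda_d \NM{\tW_{\ip{d}}}{*}, \qquad d = 1,\dots,D .
\end{align*}
Next I would pass to matrix form. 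Using the matricization isometry, $\NM{\tW - \breve{\ten{Z}}_t^d}{F}^2 = \NM{\tW_{\ip{d}} - (\breve{\ten{Z}}_t^d)_{\ip{d}}}{F}^2$, and since $\tW \mapsto \tW_{\ip{d}}$ maps onto all of $\R^{I_d \times I_{D\backslash d}}$, the change of variable $M = \tW_{\ip{d}}$ turns the $d$-th subproblem into
\begin{align*}
\arg\min_{M \in \R^{I_d \times I_{D\backslash d}}} \frac{1}{2}\NM{M - (\breve{\ten{Z}}_t^d)_{\ip{d}}}{F}^2 + \mu\lambda_d \NM{M}{*},
\end{align*}
whose unique minimizer is $\svt_{\mu\lambda_d\NM{\cdot}{*}}((\breve{\ten{Z}}_t^d)_{\ip{d}})$ by Lemma~\ref{lem:svt}. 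Undoing the change of variable (tensorizing back along mode $d$) yields $(\tX^d_{t+1})_{\ip{d}} = \svt_{\mu\lambda_d\NM{\cdot}{*}}((\breve{\ten{Z}}_t^d)_{\ip{d}})$, which is the claim.

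There is essentially no hard step here; the only points requiring care are bookkeeping. One must confirm that the mode-$d$ matricization is genuinely a bijective linear isometry, so that it commutes with $\arg\min$ and the change of variables is legitimate — this is immediate from its definition in the Notation section as a reindexing of tensor entries. One must also check that $\breve{\ten{Z}}_t^d$ in \eqref{eq:tcacc} is exactly the point $\ten{Y}_t^d - \mu \nabla_{\tX^d} f$ at which the APG proximal step is evaluated, so that identifying the update with $\prox{\mu g}{\cdot}$ is valid; this follows from \eqref{eq:ften}, since $\partial f / \partial \tX^d_{i_1\dots i_D} = \ell'\!\big(\sum_{d'}\tX^{d'}_{i_1\dots i_D},\ten{O}_{i_1\dots i_D}\big)$ is independent of $d$, i.e.\ $\nabla_{\tX^d} f$ is the common sparse tensor $\ten{S}_t$ for every $d$. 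I expect these verifications to be the ``main obstacle'' only in the sense of being the parts that must be spelled out carefully; the analytic content is entirely carried by Lemma~\ref{lem:svt}.
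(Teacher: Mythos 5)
Your proposal is correct and follows essentially the same route as the paper: separate the proximal objective into $D$ independent block subproblems, use the fact that mode-$d$ matricization is a Frobenius-norm-preserving reindexing to rewrite each subproblem as a matrix proximal step, and apply Lemma~\ref{lem:svt}. The additional verification that $\nabla_{\tX^d} f$ equals the common sparse tensor $\ten{S}_t$ for every $d$ is a correct and welcome sanity check on \eqref{eq:tcacc}, though it is not needed for the proposition itself, which concerns only the proximal operator at the given points.
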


The stepsize rule in \eqref{eq:stepsize} depends on 
the modulus of Lipschitz smoothness of $f$,
which is given by the following.
% The proof can be found in Appendix~A.6.

\begin{proposition} \label{pr:lipten}
If $\ell$ is $\rho$-Lipschitz smooth,
$f$ in \eqref{eq:ften} is $\sqrt{D}\rho$-Lipschitz smooth.
\end{proposition}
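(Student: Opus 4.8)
The plan is to reduce the tensor statement to a matrix statement and then invoke the reasoning behind Proposition~\ref{pr:lipmac}. First I would write the gradient of $f$ in \eqref{eq:ften} explicitly. Since $f([\tX^1,\dots,\tX^D]) = \sum_{(i_1,\dots,i_D)\in\Omega} \ell(\sum_{d}\tX^d_{i_1\dots i_D}, \ten{O}_{i_1\dots i_D})$ depends on the blocks only through their sum $\tX = \sum_d \tX^d$ at observed entries, the partial gradient with respect to each block $\tX^d$ is the \emph{same} sparse tensor $\ten{S}$ whose entries are $\frac{d\ell}{d(\cdot)}$ evaluated at $(\sum_d \tX^d)_{i_1\dots i_D}$ on $\Omega$ and $0$ off $\Omega$ (this is exactly $\ten{S}_t$ of \eqref{eq:tcaccs}). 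Thus $\nabla f = [\ten{S}, \ten{S}, \dots, \ten{S}]$ ($D$ copies).

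Next I would estimate $\|\nabla f([\tX^1,\dots,\tX^D]) - \nabla f([\tY^1,\dots,\tY^D])\|_F$ in the natural product norm $\|[\cdot,\dots,\cdot]\|_F^2 = \sum_{d=1}^D \|\cdot^d\|_F^2$. Writing $\ten{S}_X$ and $\ten{S}_Y$ for the two sparse gradient tensors, the difference is $[\ten{S}_X - \ten{S}_Y, \dots, \ten{S}_X - \ten{S}_Y]$, so its product norm equals $\sqrt{D}\,\|\ten{S}_X - \ten{S}_Y\|_F$. Now $\|\ten{S}_X - \ten{S}_Y\|_F^2 = \sum_{(i_1,\dots,i_D)\in\Omega}\bigl(\ell'((\textstyle\sum_d\tX^d)_{i_1\dots i_D}) - \ell'((\textstyle\sum_d\tY^d)_{i_1\dots i_D})\bigr)^2$, and by $\rho$-Lipschitz smoothness of $\ell$ in its first argument this is at most $\rho^2 \sum_{(i_1,\dots,i_D)\in\Omega}\bigl((\sum_d\tX^d)_{i_1\dots i_D} - (\sum_d\tY^d)_{i_1\dots i_D}\bigr)^2$. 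The inner sum over $\Omega$ is bounded by $\|\sum_d(\tX^d - \tY^d)\|_F^2$, and by Cauchy--Schwarz $\|\sum_d(\tX^d-\tY^d)\|_F^2 \le D\sum_d\|\tX^d - \tY^d\|_F^2 = D\,\|[\tX^1,\dots,\tX^D] - [\tY^1,\dots,\tY^D]\|_F^2$. Combining, $\|\nabla f(\cdot) - \nabla f(\cdot)\|_F \le \sqrt{D}\cdot \rho\cdot\sqrt{D}\,\|\cdot - \cdot\|_F = \sqrt{D}\rho\,\|\cdot - \cdot\|_F$, which is the claim.

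The only mildly delicate point — the main obstacle, such as it is — is bookkeeping the two distinct places where a factor of $\sqrt{D}$ appears: one from replicating the gradient across $D$ blocks (giving $\sqrt{D}$ in the product norm of the gradient difference), and one from the Cauchy--Schwarz bound on $\|\sum_d(\tX^d-\tY^d)\|_F$ in terms of the block norms. One must be careful that these do \emph{not} multiply to $D$: the factor $\sqrt{D}$ from $\|\sum_d(\cdot)\|_F \le \sqrt{D}\,\|[\cdot,\dots,\cdot]\|_F$ sits inside the bound on $\|\ten{S}_X-\ten{S}_Y\|_F$, while the other $\sqrt{D}$ comes from $\|[\ten{S}_X-\ten{S}_Y,\dots]\|_F = \sqrt{D}\|\ten{S}_X-\ten{S}_Y\|_F$; tracking which norm each factor refers to gives exactly $\sqrt{D}\rho$, not $D\rho$. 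Everything else is the routine matricization-invariance of the Frobenius norm and the entrywise definition of $\ell$-smoothness already used in Proposition~\ref{pr:lipmac}.
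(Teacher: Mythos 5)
Your setup is actually more careful than the paper's: identifying $\nabla f$ as $D$ identical copies of the sparse tensor $\ten{S}$ is correct, and both of your $\sqrt{D}$ factors are real. The gap is in the final combination step, and it is a genuine one: the two factors \emph{do} multiply. You have, on one hand, $\|\nabla f([\ten{X}^1,\dots,\ten{X}^D])-\nabla f([\ten{Y}^1,\dots,\ten{Y}^D])\|_F=\sqrt{D}\,\|\ten{S}_X-\ten{S}_Y\|_F$ and, on the other, $\|\ten{S}_X-\ten{S}_Y\|_F\le\rho\,\|\sum_d(\ten{X}^d-\ten{Y}^d)\|_F\le\rho\sqrt{D}\,\|[\ten{X}^1,\dots,\ten{X}^D]-[\ten{Y}^1,\dots,\ten{Y}^D]\|_F$. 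These two bounds are composed in series along one chain of inequalities, so the constants compose: $\sqrt{D}\cdot\rho\cdot\sqrt{D}=D\rho$, not $\sqrt{D}\rho$. The remark that ``tracking which norm each factor refers to'' keeps them from multiplying is not an argument --- each intermediate quantity is a plain nonnegative number, and the product of the constants is $D\rho$ regardless of which norms they came from. In fact $D\rho$ is the honest constant under the product Frobenius norm you are using: for the square loss with all entries observed, $f([\ten{X}^1,\dots,\ten{X}^D])=\frac12\|\sum_d\ten{X}^d-\ten{O}\|_F^2$ has Hessian (over the $D$-tuple) equal to the $D\times D$ block matrix with every block the identity, whose largest eigenvalue is exactly $D$.

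For comparison, the paper's proof arrives at $\sqrt{D}\rho$ by incurring only \emph{one} of your two factors: its opening equality writes $\|\nabla f([\ten{X}^1,\dots,\ten{X}^D])-\nabla f([\ten{Y}^1,\dots,\ten{Y}^D])\|_F^2$ as a single sum over $\Omega$ of the squared differences of the derivatives of $\ell$ --- that is, it measures the gradient difference as \emph{one} copy of $\ten{S}_X-\ten{S}_Y$ rather than $D$ copies --- and then applies only the Cauchy--Schwarz factor $\sqrt{D}$ on the arguments. Your computation shows precisely where that convention diverges from the product norm used on the domain in the same proof. To land on the stated constant you would have to adopt that single-copy convention for the gradient norm explicitly and justify it; otherwise the clean conclusion of your argument is $D\rho$-Lipschitz smoothness, which is still finite and would simply call for the smaller stepsize $\mu=1/(D\rho)$ in Algorithm~\ref{alg:AISImpute:ten} without affecting the $O(1/T^2)$ rate.
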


Proposition~\ref{pr:approGSVT} can be used to reduce the size  of
$(\breve{\ten{Z}}_t^d)_{\ip{d}}$
in 
Proposition~\ref{pr:gsvt}, and Algorithm~\ref{alg:powermethod} can be used to approximate the
underlying SVD.
However, this is still not fast enough.
Assume that $\breve{k}^d_t$ singular values in $(\breve{\ten{Z}}_t^d)_{\ip{d}}$ are larger than $\mu\lambda_d$,
and rank-$k^d_t$ SVD,
where $k^d_t \ge \breve{k}^d_t$, 
is performed.
SVT on $(\ten{Z}_t^d)_{\ip{d}}$ takes $O( k^d_t \prod_{d = 1}^D I_d )$ time.
As SVT has to be performed on each mode,
one iteration of APG takes $O( \prod_{d = 1}^{D} I_d \sum_{d = 1}^D k^d_t )$ time,
which is expensive.

\subsection{Fast Approximate SVT with Special Structure}
\label{sec:specstuc}

In Section~\ref{sec:compSVT},
the special ``sparse plus low-rank'' structure can greatly reduce the time complexity
of matrix multiplications.
As $\tX^d_{t - 1}, \tX^d_t$ are low-rank tensors and $\ten{S}_t$ is sparse, 
$\breve{\ten{Z}}^d_t$ in 
\eqref{eq:tcacc} also has the
``sparse plus low-rank'' structure.
%\begin{eqnarray}
%(\breve{\ten{Z}}^d_t)_{\ip{d}}
%= (1 + \theta_t) (\tX^d_t)_{\ip{d}} - \theta_t (\tX^d_{t - 1})_{\ip{d}} - \mu \ten{S}_t,
%\end{eqnarray}
However, 
to generate $\tX^d_{t + 1}$ using Proposition~\ref{pr:gsvt},
we need to perform matrix multiplications of the form
$(\breve{\ten{Z}}^d_t)_{\ip{d}} v$, where $ v \in \R^{I_{D \setminus d}}$,
and $u^{\top} (\breve{\ten{Z}}_t)_{\ip{d}}$, where  $u \in \R^{I_d}$.
Unfolding $\breve{\ten{Z}}_t$
takes $O( \prod_{d = 1}^{D} I_d )$ time and can be expensive.
In the following, we show how this can be avoided.

To generate $(\ten{X}_{t + 1}^d)_{\ip{d}}$,
it can be seen 
from Proposition~\ref{pr:gsvt} and \eqref{eq:tcacc}
that 
$\ten{X}_t^d$ and $\ten{X}_{t - 1}^d$ 
only need to be unfolded along their $d$th modes.
Hence, 
instead of storing them as tensors,
we store 
$(\tX^d_t)_{\ip{d}}$ as its rank-$r^d_t$  SVD $U^d_t \Sigma^d_t {V^d_t}^{\top}$, and $(\tX^d_{t - 1})_{\ip{d}}$ as its rank-$r^d_{t - 1}$ SVD $U^d_{t - 1} \Sigma^d_{t - 1} {V^d_{t - 1}}^{\top}$.
For any $ v \in \R^{I_{D \setminus d}}$,
\begin{eqnarray*}
(\breve{\ten{Z}}^d_t)_{\ip{d}} v
& = & (1 + \theta_t) U^d_t \Sigma^d_t ({V^d_t}^{\top} v) 
\notag \\
& & - \theta_t U^d_{t - 1} \Sigma^d_{t - 1} ({V^d_{t - 1}}^{\top} v) - \mu (\ten{S}_t)_{\ip{d}} v.
\label{eq:splrten1} 
\end{eqnarray*}
The first two terms 
can be computed in $O( (I_d + I_{D \setminus d})(r^d_t + r^d_{t - 1}) )$ time.
%\footnote{+++ sparse tensor is stored as $(i_1, \dots, i_D, v)$ where $i_d$ is its index on $d$th mode and $v$ is the value (similar as storage of coordinate format for sparse matrix).  Thus, no folding/unfolding is required for multiplication with $\ten{S}_t$.}
As $\ten{S}_t$ is sparse, computing
the last term takes $O(\NM{\Omega}{1})$ time.
Thus, 
$(\breve{\ten{Z}}^d_t)_{\ip{d}} v$
can be obtained in
$O( \NM{\Omega}{1} + (I_d + I_{D \setminus d})(r^d_t + r^d_{t - 1}) )$ time.
Similarly, 
for any $u \in \R^{I_d}$,
%\begin{eqnarray*}
$u^{\top} (\breve{\ten{Z}}_t)_{\ip{d}}$
%& = & (1 + \theta_t) (u^{\top} U^d_t) \Sigma^d_t {V^d_t}^{\top} \notag\\
%&& - \theta_t (u^{\top}U^d_{t - 1}) \Sigma^d_{t - 1} {V^d_{t - 1}}^{\top} - \mu u^{\top} (\ten{S}_t)_{\ip{d}}
%\label{eq:splrten2}  
%\end{eqnarray*}
can be computed in
$O( \NM{\Omega}{1} + (I_d + I_{D \setminus d})(r^d_t + r^d_{t - 1}) )$ time.
Thus, performing approximate SVT on $(\breve{\ten{Z}}_t^d)_{\ip{d}}$,
with rank $k^d_t \ge \breve{k}^d_t$,
using
Algorithm~\ref{alg:apprSVT} takes 
$ O( k^d_t \NM{\Omega}{1} + k^d_t (I_d + I_{D \setminus d}) (r^d_t + r^d_{t - 1}) ) $
time.
Using Proposition~\ref{pr:gsvt},
solving the proximal step 
$\prox{\mu g}{[\breve{\ten{Z}}_t^1, \dots, \breve{\ten{Z}}_t^D]}$ takes a total of
\begin{align}
O(  \sum_{d = 1}^D k^d_t \NM{\Omega}{1} + k^d_t (I_d + I_{D \setminus d}) (r^d_t + r^d_{t - 1}) ) 
\label{eq:tentime1}
\end{align}
time.
As the target tensor is low-rank, $r^d_t, k^d_t \ll I_d$ for $d = 1, \dots, D$. Hence, 
\eqref{eq:tentime1} is much faster than 
directly using Proposition~\ref{pr:gsvt}
($O( \prod_{d = 1}^{D} I_d \sum_{d = 1}^D k^d_t )$ time).

\subsection{The Proposed Algorithm}

The whole procedure is shown in
Algorithm~\ref{alg:AISImpute:ten}.
%which extends Algorithm~\ref{alg:AISimpute} to the tensor case in (\ref{eq:tc}).
Unlike, Algorithm~\ref{alg:AISimpute}, $D$ SVTs
have to be computed (steps~5-11) in each iteration.

\begin{algorithm}[ht]
\caption{AIS-Impute (tensor case).}
	\begin{algorithmic}[1]
		\REQUIRE partially observed tensor $\ten{O}$, parameter $\lambda$;
		\STATE initialize $c = 1$, $\tX^1_0 = \dots = \tX^D_0 = 0$, 
		$\tX^1_1 = \dots = \tX^D_1 = 0$,
		step-size $\mu = 1/(\sqrt{D}\rho)$,
		$\hat{\lambda} > \max_{d = 1, \dots, D}\lambda_d$ and $\nu \in (0, 1)$;
		\FOR{$t = 1, 2, \dots, T$}
		\STATE $\theta_t = (c - 1)/(c + 2)$;
		\STATE construct the sparse observed tensor $\ten{S}_t$ from \eqref{eq:tcaccs};
		\FOR{$d = 1, \dots, D$}
		\STATE $(\lambda_d)_t = 
		(\hat{\lambda} - \lambda_d) 
		\nu^{t - 1} 
		+ \lambda_d$;
		\STATE $\breve{\ten{Z}}_t^d = (1 + \theta_t) \tX^d_t - \theta_t \tX^d_{t - 1} + \mu \ten{S}_t$;
		\STATE $V^d_{t-1} \! = \! V^d_{t-1} \! - \! V^d_t((V^d_t)^{\top} V^d_{t-1})$, 
		remove zero columns;
		\STATE $R^d_t = \text{QR}(\left[V^d_t, V^d_{t-1} \right])$; 
		\STATE $\left[ U^d_{t+1}, \Sigma^d_{t+1}, V^d_{t+1} \right]$\\
		= $\text{approx-SVT} ( (\breve{\ten{Z}}^d_t)_{\ip{d}}, R^d_t, \mu (\lambda_d)_t, J )$; 
		\\ $//$ $\ten{X}_{\ip{d}}^{d} = U^d_{t+1} \Sigma^d_{t+1} (V^d_{t+1})^{\top}$
		\ENDFOR
		
		\STATE \textbf{if} $F( [ \ten{X}^1_{t + 1}, \dots, \ten{X}^D_{t + 1} ] )  
		\! > \! F( [ \ten{X}^1_t, \dots ,\ten{X}^D_t ] )$ \textbf{then} $c = 1$;
		\STATE \textbf{else} $c = c + 1$; \textbf{end if}
		
		\ENDFOR
		\RETURN $U^d_{t+1}$, $\Sigma^d_{t+1}$, $V^d_{t+1}$ where $d = 1, \dots, D$.
	\end{algorithmic}
	\label{alg:AISImpute:ten}
\end{algorithm}

Analogous to Theorem~\ref{the:AISImpute:conv},
we have the following.
% The proof can be found in Appendix~A.7.

\begin{theorem}
\label{the:AISImpute:ten:conv}
Assume that (i) $k_t^d \ge \breve{k}_t^d$ for $d = 1, \dots, D$, all $t$ and $J = t$; 
%\footnote{As in Algorithm~\ref{alg:AISimpute}, empirically we simply set $J = 3$.}
(ii) $F( [ \ten{X}^1_t, \dots ,\ten{X}^D_t ] )$ is upper bounded.
The sequence $\{ [\tX^1_t, \dots, \tX^D_t] \}$ generated from Algorithm~\ref{alg:AISImpute:ten}
converges to the optimal solution with
a $O(1/T^2)$ rate.
\end{theorem}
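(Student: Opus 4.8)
The plan is to mirror the proof of Theorem~\ref{the:AISImpute:conv}, casting \eqref{eq:tc} as an instance of the inexact accelerated proximal gradient scheme so that Proposition~\ref{cor:iapg:require} applies. First I would view $[\tX^1, \dots, \tX^D]$ as a single variable in the product space $\R^{I_1 \times \cdots \times I_D} \times \cdots \times \R^{I_1 \times \cdots \times I_D}$ ($D$ copies, one per mode), equipped with the inner product $\langle [\tX^1, \dots, \tX^D], [\tY^1, \dots, \tY^D] \rangle = \sum_{d = 1}^D \langle \tX^d, \tY^d \rangle$, with $f$ and $g$ as in \eqref{eq:ften}--\eqref{eq:gten}. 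Then $f$ is convex and, by Proposition~\ref{pr:lipten}, $\sqrt{D}\rho$-Lipschitz smooth, so the stepsize $\mu = 1/(\sqrt{D}\rho)$ used in Algorithm~\ref{alg:AISImpute:ten} satisfies the condition \eqref{eq:stepsize}, while $g$ is convex, nonsmooth, and block-separable. Since the sparse tensor $\ten{S}_t$ in \eqref{eq:tcaccs} is exactly the common value of the $D$ block-gradients of $f$ evaluated at $[\ten{Y}_t^1, \dots, \ten{Y}_t^D]$, the extrapolation and gradient steps \eqref{eq:tcacc} are carried out with no gradient error, i.e.\ $e_t = 0$ in the notation of Section~\ref{sec:prox}.

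Next I would quantify the error in the proximal step. By Proposition~\ref{pr:gsvt}, the exact proximal operator $\prox{\mu g}{[\breve{\ten{Z}}_t^1, \dots, \breve{\ten{Z}}_t^D]}$ decouples into $D$ independent singular value thresholdings, one on each unfolding $(\breve{\ten{Z}}_t^d)_{\ip{d}}$, and correspondingly the proximal objective $h_{\mu g}$ of \eqref{eq:h} is additively separable over the modes. Algorithm~\ref{alg:AISImpute:ten} replaces each exact SVT by approx-SVT (Algorithm~\ref{alg:apprSVT}), incurring a per-mode error $\varepsilon_t^d \ge 0$, so the total proximal error is $\varepsilon_t = \sum_{d = 1}^D \varepsilon_t^d$. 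I would then run the argument of Proposition~\ref{pr:inexact} blockwise: assumption~(i) ($k_t^d \ge \breve{k}_t^d$ and $J = t$) guarantees, via Proposition~\ref{pr:approGSVT} and the warm-started power method (Algorithm~\ref{alg:powermethod}) with $R_t^d = \text{QR}([V_t^d, V_{t-1}^d])$, that the computed subspace approaches the span of the top $\breve{k}_t^d$ left singular vectors of $(\breve{\ten{Z}}_t^d)_{\ip{d}}$ at a geometric rate; assumption~(ii) (boundedness of $F([\tX_t^1, \dots, \tX_t^D])$) keeps the iterate ranks $r_t^d$ and the spectra of $(\breve{\ten{Z}}_t^d)_{\ip{d}}$ uniformly bounded, so the relevant singular-value gaps are bounded away from $0$. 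This gives $\varepsilon_t^d \to 0$ linearly for each $d$, hence $\sqrt{\varepsilon_t} = (\sum_{d = 1}^D \varepsilon_t^d)^{1/2}$ also decreases linearly, and in particular is $O(1/t^{2 + \delta})$ for every $\delta > 0$.

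With $e_t = 0$ and $\sqrt{\varepsilon_t}$ decreasing linearly, Proposition~\ref{cor:iapg:require} delivers the $O(1/T^2)$ rate for the inexact accelerated proximal gradient iterates, which is precisely what the per-iteration updates of Algorithm~\ref{alg:AISImpute:ten} produce. The continuation on $(\lambda_d)_t$ (step~6) and the adaptive restart (steps~12--13) are handled as in the matrix case: continuation only enlarges the regularizer for finitely many iterations and does not affect the asymptotic rate, while restarting can only decrease $F$ and so preserves the guarantee, as in \cite{o2012adaptive}. I expect the main obstacle to be the blockwise extension of Proposition~\ref{pr:inexact}: one must make quantitative, uniformly in $t$ and $d$, the control on the gap between $\sigma_{\breve{k}_t^d}((\breve{\ten{Z}}_t^d)_{\ip{d}})$ and $\mu(\lambda_d)_t$ and on the rank $r_t^d$ of the iterates — both of which follow in principle from boundedness of $F$ — and check that after the extrapolation step \eqref{eq:tcacc} the warm start $R_t^d$ still lies close enough to the target subspace for all $D$ modes simultaneously.
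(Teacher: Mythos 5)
Your proposal follows essentially the same route as the paper's proof: decompose the proximal objective $h_{\mu g}$ additively over the $D$ modes via Proposition~\ref{pr:gsvt}, apply the matrix-case inexact-SVT bound (Proposition~\ref{pr:proxrate}) to each unfolding, sum the per-mode errors, use the boundedness of $F$ to uniformly bound the constants $(\alpha_d)_t(\beta_d)_t(\gamma_d)_t$ so that $\varepsilon_t = q\eta^t$ decays linearly, and invoke Proposition~\ref{cor:iapg:require} with $e_t = 0$. Your added remarks on the singular-value gaps and on continuation/restart flag real loose ends, but the paper's own argument handles them no more rigorously (it simply sets $\eta = \max_{d,t}(\eta_d)_t < 1$), so the two proofs are substantively the same.
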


%%%%%%%%%%%%%%%%%%%%%%%%%%%%%%%%%%%%%%%%%%%%%%%%%%%%%%%%%%%%%%%%%%%%%%%%%

\subsection{Post-Processing}
\label{sec:postproc}

As in Section~\ref{sec:postmc},
the nuclear norm regularizer in \eqref{eq:tc}
may over-penalize
top singular values.
To undo such shrinkage and boost recovery performance,
we also adopt post-processing here.
Let the tensor output from Algorithm~\ref{alg:AISImpute:ten} be $\ten{X} = \sum_{d = 1}^D
\ten{X}^d$,
where $\ten{X}^d_{\ip{d}} = U^d \Sigma^d (V^d)^{\top}$
has rank $k^d$.
Define 
$\ten{A}( \theta^1, \dots, \theta^D ) = 
\sum_{d = 1}^D (  U^d \Diag{\theta^d} (V^d)^{\top} )_{\ip{d}}$.
As in  \eqref{eq:post:mc},
we replace $\ten{X}$ with $\ten{A}\left( \theta^1_*, \dots, \theta^D_* \right)$,
where
\begin{align}
\label{eq:post:tc}
[ ( \theta_*^1 )^{\top}, \dots, ( \theta_*^D )^{\top} ]^{\top} =
\arg\min_{\theta^1, \dots, \theta^D} \phi(\theta^1, \dots, \theta^D) ,
\end{align}
and
\begin{align*}
\phi( \theta^1, \dots, \theta^D )
= \sum_{(i_1, \dots, i_D)\in\Omega}
\ell( \ten{A}(\theta^1, \dots, \theta^D)_{i_1\dots i_D}, \ten{O}_{i_1\dots i_D} ).
\end{align*}
As \eqref{eq:post:tc} is a smooth convex problem,
L-BFGS is used for optimization.
Let $U^d = [u^d]$ and $V^d = [v^d]$.
Then, 
$\nabla \phi(\theta^1, \dots, \theta^D) = [ (w^1)^{\top}, \dots, (w^D)^{\top} ]^{\top}$
where $w^d = [w^d_i] \in k^d$,
$w^d_i = (u^d_i)^{\top}
\ten{B}_{\ip{d}} v^d_i$,
and
$\ten{B}_{i_1\dots i_D} = \frac{d \ell\left( \ten{A}([\theta^1,\dots,\theta^D])_{i_1\dots i_D}, \ten{O}_{i_1\dots i_D} \right)}{d \ten{A}([\theta^1,\dots,\theta^D])_{i_1\dots i_D}} $
if $(i_1,\dots, i_D) \in \Omega$ and $0$ otherwise.
Computation of $\nabla \phi(\theta^1, \dots, \theta^D)$ takes 
$O(\sum_{d = 1}^D k^d \NM{\Omega}{1})$ time,
%Thus, post-processing takes $O( \sum_{d = 1}^D k^d \NM{\Omega}{1} )$ time,
which is comparable to the per-iteration complexity of AIS-Impute in \eqref{eq:tentime1}
and is very efficient.
Thus, each L-BFGS iteration is inexpensive.
As for the matrix case,
empirically,
L-BFGS converges in fewer than 10 iterations.
These make post-processing very efficient.

% ----------------------------------------------------------------------
% Experiment
% ----------------------------------------------------------------------

\section{Experiments}
\label{sec:expts}

In this section, we perform experiments on 
matrix completion (Sections~\ref{sec:matcomp:syn}-\ref{sec:explink}) and tensor
completion (Sections~\ref{sec:tensorsyn}, \ref{sec:multen}).
Experiments
are performed on a PC with Intel Xeon E5-2695 CPU and 256GB RAM.
All algorithms are implemented in Matlab.
%with operations on $\Omega$ written in C.
%and linked to Matlab using mex files.

%%%%%%%%%%%%%%%%%%%%%%%%%%%%%%%%%%%%%%%%

\subsection{Synthetic Data}
\label{sec:matcomp:syn}

\begin{table*}[ht]
\centering
\vspace{-5px}
\caption{Matrix completion results on the synthetic data.
	Here, sparsity is the proportion of observed entries,
	and post-processing
	time is in seconds.}
\vspace{-10px}
\begin{tabular}{c | c | c | c |  c || c }
	\hline
	&                    &                \multicolumn{2}{c|}{NMSE}                &      & \\
	&                    & without post-processsing               & with post-processing             & rank & post-processing time      \\ \hline
	$m=250$     & APG                & \textbf{0.0167$\pm$0.0007} & \textbf{0.0098$\pm$0.0001} & 5    & 0.01      \\ \cline{2-6}
(sparsity: 33.1\%) & Soft-Impute        & \textbf{0.0166$\pm$0.0007} & 0.0099$\pm$0.0001          & 5    & 0.01      \\ \cline{2-6}
	& AIS-Impute (exact) & \textbf{0.0165$\pm$0.0007} & \textbf{0.0098$\pm$0.0001} & 5    & 0.01      \\ \cline{2-6}
	& AIS-Impute         & \textbf{0.0165$\pm$0.0007} & \textbf{0.0098$\pm$0.0001} & 5    & 0.01      \\ \hline
	$m=1000$     & APG                & \textbf{0.0165$\pm$0.0001} & \textbf{0.0090$\pm$0.0001} & 5    & 0.01      \\ \cline{2-6}
	(sparsity: 10.4\%) & Soft-Impute        & 0.0170$\pm$0.0005          & 0.0097$\pm$0.0001          & 5    & 0.03      \\ \cline{2-6}
	& AIS-Impute (exact) & \textbf{0.0166$\pm$0.0001} & 0.0093$\pm$0.0001          & 5    & 0.02      \\ \cline{2-6}
	& AIS-Impute         & \textbf{0.0166$\pm$0.0001} & 0.0092$\pm$0.0001          & 5    & 0.02      \\ \hline
	$m=4000$     & APG                & \textbf{0.0142$\pm$0.0002} & 0.0080$\pm$0.0001          & 5    & 0.05      \\ \cline{2-6}
	(sparsity: 3.1\%)  & Soft-Impute        & \textbf{0.0143$\pm$0.0003} & 0.0082$\pm$0.0002          & 5    & 0.18      \\ \cline{2-6}
	& AIS-Impute (exact) & \textbf{0.0142$\pm$0.0002} & \textbf{0.0080$\pm$0.0001} & 5    & 0.11      \\ \cline{2-6}
	& AIS-Impute         & \textbf{0.0142$\pm$0.0002} & \textbf{0.0080$\pm$0.0001} & 5    & 0.13      \\ \hline
\end{tabular}
\label{tab:matperf}
\vspace{-15px}
\end{table*}

In this section, we perform matrix completion experiments with synthetic data. 
The ground-truth 
matrix has a rank of 5, and is 
generated as $O =
U V \in
\R^{m\times m}$,  where the entries of $U \in \R^{m \times 5}$ and $V \in \R^{5 \times m}$
are sampled i.i.d. from the standard normal distribution $\mathcal{N}(0, 1)$.
Noise, sampled from $\mathcal{N}(0, 0.05)$, is then added.
We 
randomly
choose 
%$\|\Omega\|_1 = 
$15 m \log(m)$  of the 
entries
in $O$ as observed.
Half of them are used for training, and the other half as validation
set for parameter tuning.
Testing is performed on the unobserved (missing) entries.
We vary $m$ in the range $\{250, 1000, 4000\}$.

The following proximal algorithms
are compared:
(i) accelerated proximal gradient algorithm
%\footnote{\url{http://www.math.nus.edu.sg/~mattohkc/NNLS.html}} 
(denoted ``APG") \cite{toh2010accelerated}: It uses PROPACK to obtain singular values that are larger than $\lambda$;
(ii) Soft-Impute
%\footnote{\url{http://www.mit.edu/~rahulmaz/software.html}}
\cite{mazumder2010spectral};
(iii) AIS-Impute (the proposed Algorithm~\ref{alg:AISimpute}); and
(iv) AIS-Impute (exact): This is a variant of the proposed algorithm with
exact SVT step (computed using PROPACK).

Let $X$ be the recovered matrix.
For performance evaluation,
we use the (i) normalized mean squared error
$\text{NMSE} = \NM{\SOO{X - U V}}{F}/ \NM{\SOO{U V}}{F}$,
and
(ii) rank of $X$.
%Experiments are performed on a PC with Intel i7 CPU and 16GB RAM.
To  reduce statistical variability, experimental results are averaged over 5 repetitions. 

Results\footnote{The lowest and comparable results (according to the pairwise t-test with 95\% confidence) are highlighted.}
are shown in 
Table~\ref{tab:matperf}.
As can be seen, all algorithms have similar NMSE performance, with Soft-Impute being slightly worse.
The plots of objective value vs time and iterations are shown in Figure~\ref{fig:synmatcomp}.
In terms of the number of iterations,
the accelerated algorithms (APG, AIS-Impute(exact)
and AIS-Impute)
are very similar 
and converge much faster than Soft-Impute
(which only has a $O(1/T)$ convergence rate).
However, in terms of time,
both APG and Soft-Impute are slow, 
as APG does not utilize the ``sparse plus low-rank'' structure and Soft-Impute has slow convergence.
AIS-Impute(exact) is consistently faster than APG and Soft-Impute,
as both acceleration and ``sparse plus low-rank'' structure are utilized.
However, AIS-Impute is the fastest
as it further allows inexact updates of the proximal step.
This also verifies our motivation of using the approximate SVT in Section~\ref{sec:apprSVT}.

\begin{figure}[ht]
\centering

\subfigure[$m = 250$.]
{\includegraphics[width=0.24\textwidth]{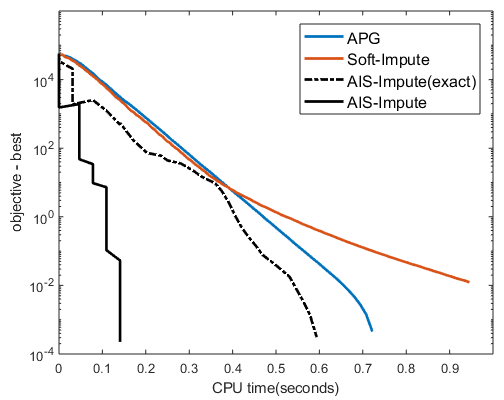}
	\includegraphics[width=0.245\textwidth]{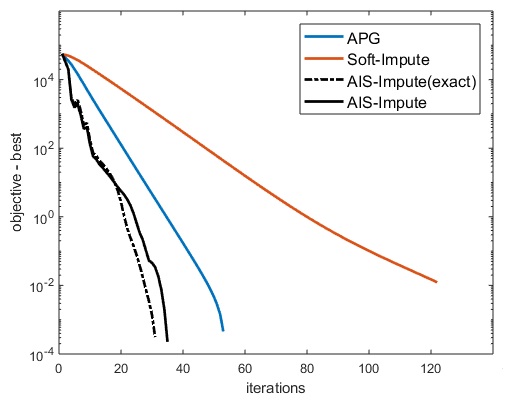}}
\subfigure[$m = 1000$.]
{\includegraphics[width=0.24\textwidth]{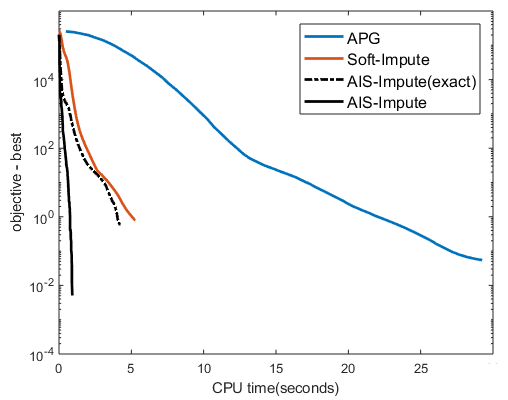}
	\includegraphics[width=0.24\textwidth]{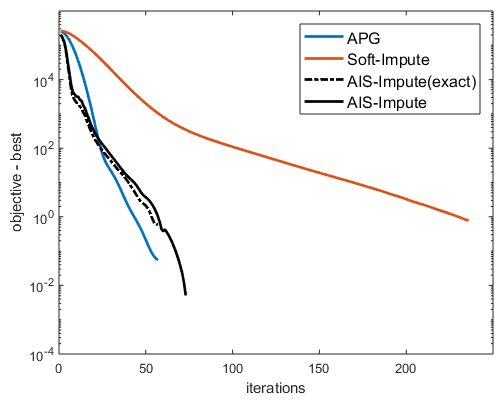}}
\subfigure[$m = 4000$.]
{\includegraphics[width=0.24\textwidth]{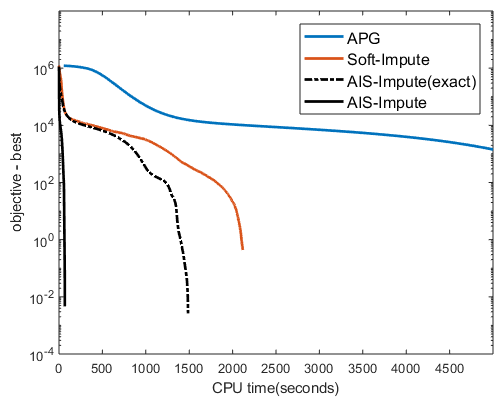}
	\includegraphics[width=0.24\textwidth]{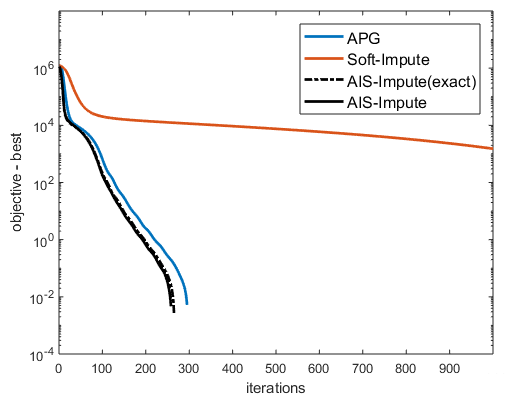}}

\vspace{-10px}
\caption{Convergence of objective value on the synthetic data.
	Left: vs CPU time (in seconds); Right: vs number of iterations
	(note that AIS-Impute(exact) and AIS-Impute overlap with each other).}
\label{fig:synmatcomp}
\vspace{-20px}
\end{figure}

%\begin{table}[ht]
%\centering
%\caption{Matrix completion results on synthetic data
%	(here, sparsity is the proportion of observed entries).}
%\vspace{-10px}
%\begin{tabular}{c | c | c |c }
%	\hline
%              &                    & NMSE                       & rank \\ \hline
%  $m=250$     & APG                & \textbf{0.0167$\pm$0.0007} & 5    \\ \cline{2-4}
%	sparsity:33.1\% & Soft-Impute        & \textbf{0.0166$\pm$0.0007} & 5    \\ \cline{2-4}
%              & AIS-Impute (exact) & \textbf{0.0165$\pm$0.0007} & 5    \\ \cline{2-4}
%              & AIS-Impute         & \textbf{0.0165$\pm$0.0007} & 5    \\ \hline
% $m=1000$     & APG                & \textbf{0.0165$\pm$0.0001} & 5    \\ \cline{2-4}
%	sparsity:10.4\% & Soft-Impute        & 0.0170$\pm$0.0005          & 5    \\ \cline{2-4}
%              & AIS-Impute (exact) & \textbf{0.0166$\pm$0.0001} & 5    \\ \cline{2-4}
%              & AIS-Impute         & \textbf{0.0166$\pm$0.0001} & 5    \\ \hline
% $m=4000$     & APG                & \textbf{0.0142$\pm$0.0002} & 5    \\ \cline{2-4}
%	sparsity:3.1\%  & Soft-Impute        & \textbf{0.0143$\pm$0.0003} & 5    \\ \cline{2-4}
%              & AIS-Impute (exact) & \textbf{0.0142$\pm$0.0002} & 5    \\ \cline{2-4}
%              & AIS-Impute         & \textbf{0.0142$\pm$0.0002} & 5    \\ \hline
%\end{tabular}
%\label{tab:synperm}
%\end{table}

Table~\ref{tab:matperf} also shows the NMSE results with post-processing (Section~\ref{sec:postmc}).
Compared to the time used by the main algorithm
(Figure~\ref{fig:synmatcomp}),
the post-processing time is small and can be ignored.
Thus, post-processing is always performed in the sequel.
%Sections~\ref{sec:recsys} and \ref{sec:explink}.

%\begin{table}[ht]
%\centering
%\caption{Matrix completion results on synthetic data, with post-processing.
%The CPU time is in seconds.}
%\vspace{-10px}
%\begin{tabular}{c | c |  c | c}
%	\hline
%       &                    & NMSE                       & time \\ \hline
%	$m=250$  & APG                & \textbf{0.0098$\pm$0.0001} & 0.01 \\ \cline{2-4}
%       & Soft-Impute        & 0.0099$\pm$0.0001          & 0.01 \\ \cline{2-4}
%       & AIS-Impute (exact) & \textbf{0.0098$\pm$0.0001} & 0.01 \\ \cline{2-4}
%       & AIS-Impute         & \textbf{0.0098$\pm$0.0001} & 0.01 \\ \hline
%	$m=1000$ & APG                & \textbf{0.0090$\pm$0.0001} & 0.01 \\ \cline{2-4}
%       & Soft-Impute        & 0.0097$\pm$0.0001          & 0.03 \\ \cline{2-4}
%       & AIS-Impute (exact) & 0.0093$\pm$0.0001          & 0.02 \\ \cline{2-4}
%       & AIS-Impute         & 0.0092$\pm$0.0001          & 0.02 \\ \hline
%	$m=4000$ & APG                & 0.0080$\pm$0.0001          & 0.05 \\ \cline{2-4}
%       & Soft-Impute        & 0.0082$\pm$0.0002          & 0.18 \\ \cline{2-4}
%       & AIS-Impute (exact) & \textbf{0.0080$\pm$0.0001} & 0.11 \\ \cline{2-4}
%       & AIS-Impute         & \textbf{0.0080$\pm$0.0001} & 0.13 \\ \hline
%\end{tabular}
%\label{tab:synpost}
%\end{table}

\subsection{Recommender System}
\label{sec:recsys}

In this section,
%we consider the standard matrix completion problem in \eqref{eq:mc}. 
experiments are performed on two well-known
benchmark data sets, 
\textit{MovieLens} and \textit{Netflix}.

\noindent
\textbf{MovieLens.}
The \textit{MovieLens} data set
%\footnote{\url{http://grouplens.org/datasets/movielens/}}
(Table~\ref{tab:recSys})
contains ratings 
($\{1,2,3,4,5\}$)
of different users on movies.
It has been commonly used  in matrix completion experiments
\cite{mazumder2010spectral,hsieh2014nuclear}.
We randomly use $50\%$ of the observed ratings for training, $25\%$ for validation and the rest for testing.
%Convergence check is the same as synthetic case.

\begin{table}[ht]
\centering
\vspace{-10px}
\caption{\textit{MovieLens} data sets used in the experiments.}
\vspace{-10px}
\begin{tabular}{c | c | c | c}
	\hline
	              & \#users & \#movies & \# observed ratings \\ \hline
	\textit{100K} & 943     & 1,682    & 100,000             \\ \hline
	 \textit{1M}  & 6,040   & 3,449    & 999,714             \\ \hline
	\textit{10M}  & 69,878  & 10,677   & 10,000,054          \\ \hline
\end{tabular}
\label{tab:recSys}
\vspace{-10px}
\end{table}

\begin{table*}[ht]
\centering
\vspace{-5px}
\caption{Results on the \textit{MovieLens} data sets. Note that TR and APG
	cannot converge 
	in $10^4$ seconds  on the 
		\textit{10M} data set.}
\vspace{-10px}
\begin{tabular}{c | c | c  c | c  c | c  c}
	\hline
	   \multicolumn{2}{c|}{}    & \multicolumn{2}{c|}{\textit{100K}} & \multicolumn{2}{c|}{\textit{1M}} & \multicolumn{2}{c}{\textit{10M}} \\
	   \multicolumn{2}{c|}{}    & RMSE                     & rank    & RMSE                     & rank  & RMSE                     & rank  \\ \hline
	factorization & LMaFit      & 0.896$\pm$0.011          & 3       & 0.827$\pm$0.002          & 6     & 0.819$\pm$0.001          & 12    \\ \cline{2-8}
	              & ASD         & 0.905$\pm$0.055          & 3       & 0.826$\pm$0.004          & 6     & 0.816$\pm$0.002          & 12    \\ \cline{2-8}
	              & R1MP        & 0.938$\pm$0.016          & 10      & 0.857$\pm$0.001          & 19    & 0.853$\pm$0.002          & 27    \\ \hline
	nuclear norm  & active      & \textbf{0.880$\pm$0.003} & 8       & \textbf{0.821$\pm$0.001} & 16    & \textbf{0.803$\pm$0.001} & 72    \\ \cline{2-8}
	minimization  & boost       & \textbf{0.881$\pm$0.003} & 8       & \textbf{0.821$\pm$0.001} & 16    & 0.814$\pm$0.001          & 15    \\ \cline{2-8}
	              & Sketchy     & 0.889$\pm$0.003          & 8       & \textbf{0.821$\pm$0.001} & 48    & 0.826$\pm$0.001          & 60    \\ \cline{2-8}
	              & TR          & 0.884$\pm$0.002          & 8       & \textbf{0.820$\pm$0.001} & 20    & ---                      & ---   \\ \cline{2-8}
	              & SSGD        & 0.886$\pm$0.011          & 8       & 0.849$\pm$0.006          & 16    & 0.858$\pm$0.014          & 45    \\ \cline{2-8}
	              & APG         & \textbf{0.880$\pm$0.003} & 8       & \textbf{0.820$\pm$0.001} & 16    & ---                      & ---   \\ \cline{2-8}
	              & Soft-Impute & \textbf{0.881$\pm$0.003} & 8       & \textbf{0.821$\pm$0.001} & 16    & \textbf{0.803$\pm$0.001} & 72    \\ \cline{2-8}
	              & ALT-Impute  & \textbf{0.882$\pm$0.003} & 8       & 0.823$\pm$0.001          & 16    & 0.805$\pm$0.001          & 45    \\ \cline{2-8}
	              & AIS-Impute  & \textbf{0.880$\pm$0.003} & 8       & \textbf{0.820$\pm$0.001} & 16    & \textbf{0.802$\pm$0.001} & 72    \\ \hline
\end{tabular}
\label{tab:movieLens}
\vspace{-5px}
\end{table*}

\begin{figure*}[ht]
\centering
\subfigure[\textit{100K}.]
{\includegraphics[width= 0.24 \textwidth]{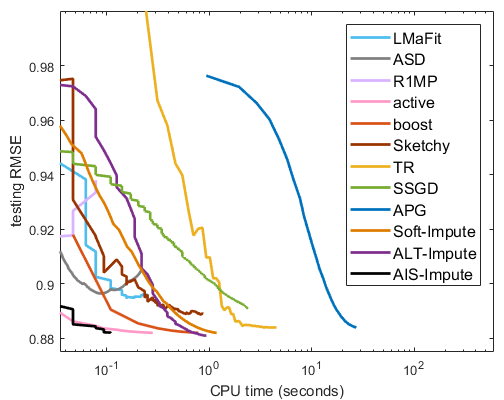}}
\qquad
\subfigure[\textit{1M}.]
{\includegraphics[width= 0.24 \textwidth]{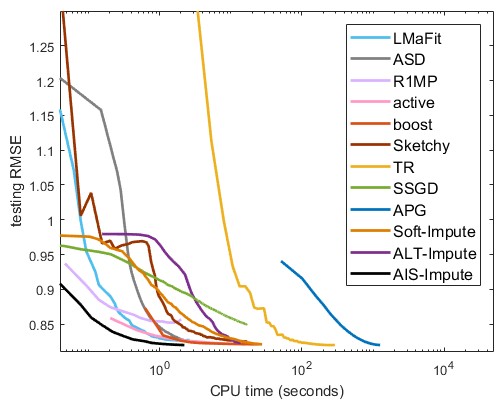}}
\qquad
\subfigure[\textit{10M}.]
{\includegraphics[width= 0.24 \textwidth]{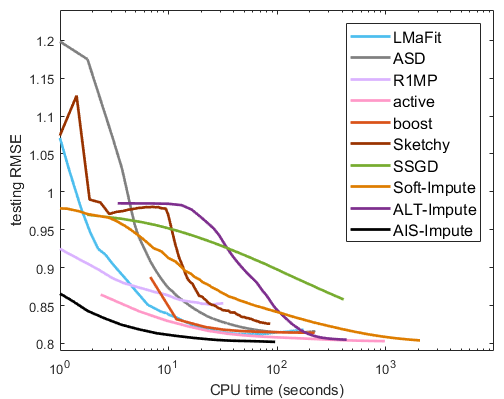}}	
\vspace{-10px}
\caption{Testing RMSE vs CPU time (in seconds) on \textit{MovieLens} data sets.}
\label{fig:movielens}
\vspace{-15px}
\end{figure*}

We compare AIS-Impute with the two most popular low-rank matrix learning approaches
\cite{koren2008factorization,candes2009exact}, namely,
factorization-based and nuclear-norm minimization methods.
The factorization-based methods include (i) large scale matrix fit (``LMaFit")
\cite{wen2012solving}, which uses alternative minimization with over-relaxation;
(ii) alternative steepest descent (``ASD'') \cite{tanner2016low}, which uses alternating steepest descent; (iii) rank-one matrix pursuit (``R1MP'') \cite{wang2015orthogonal}, which greedily pursues a rank-one basis in each iteration. 	
The nuclear-norm minimization methods include (i) active subspace selection
(``active") \cite{hsieh2014nuclear}, which uses the power method in each iteration
to identify the active row and column subspaces; (ii) a boosting approach
(``boost'') \cite{zhang2012accelerated}, which uses a variant of the Frank-Wolfe
(FW) algorithm \cite{frank1956algorithm}, with local optimization 
in each iteration
using L-BFGS; (iii) sketchy decisions (``Sketchy'') \cite{yurtsever2017sketchy}, 
which is also a FW variant, 
and uses random matrix projection \cite{halko2011finding} to reduce 
the space and per-iteration time complexities;
(iv) second-order trust-region algorithm (``TR") \cite{mishra2013low}, which alternates fixed-rank optimization and rank-one updates; 
(vi)
stochastic gradient descent (``SSGD'') \cite{avron2012efficient}, which is a stochastic gradient descent algorithm;
and
(v) matrix completion based on fast alternating least squares (``ALT-Impute'') \cite{hastie2015matrix}, 
which is a fast variant of Soft-Impute \cite{mazumder2010spectral} that avoids SVD by alternating least squares.
For all algorithms,
parameters are tuned using the validation set.
The algorithm is stopped when the relative change in 
objectives between consecutive iterations is smaller than ${10}^{-4}$.

For performance evaluation, 
as in \cite{hsieh2014nuclear,mazumder2010spectral}, 
we use (i) the 
root mean squared error 
on the test set:
$\text{RMSE} = \NM{P_{\hat{\Omega}}(X - \hat{O})}{F} / (\NM{\hat{\Omega}}{1})^\frac{1}{2}$, where
$X$ is the recovered matrix,
and the testing ratings $\{\hat{O}_{ij}\}$ is indexed by the set
$\hat{\Omega}$;
and (ii) rank  of $X$. 
The experiment is repeated 5 times and the average performance is reported.

Results are shown in Table~\ref{tab:movieLens}. As can be seen,
AIS-Impute is consistently the fastest and has the lowest RMSE.
On \textit{MovieLens-10M},
TR and APG are not run 
as they are too slow.
Figure~\ref{fig:movielens} shows the
testing RMSE 
with 
CPU time.
As can be seen,
Boost, TR, SSGD and APG are all very slow.
Boost and TR need to solve an expensive subproblem in each iteration;
SSGD has slow convergence; while
APG requires SVD and does not utilize the ``sparse plus low-rank" structure for fast matrix multiplication.
ALT-Impute and LMaFit do not need SVT,
and are faster than Soft-Impute.
However, their 
nonconvex 
formulations 
have slow convergence,
and are thus slower than AIS-Impute.
Overall, 
AIS-Impute is the fastest,
as it combines inexpensive iteration with fast convergence.

\begin{figure*}[ht]
\centering
\subfigure[$\lambda = \lambda_0/10$.]
{\includegraphics[width=0.2375\textwidth]{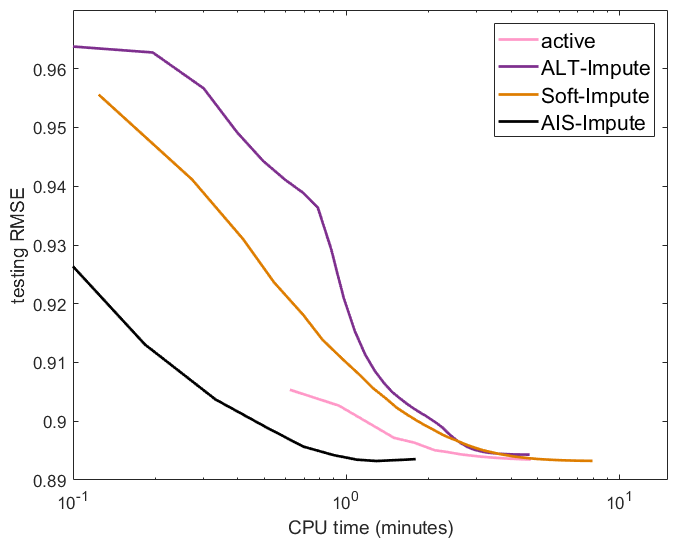}}
\qquad
\subfigure[$\lambda = \lambda_0/20$.]
{\includegraphics[width=0.245\textwidth]{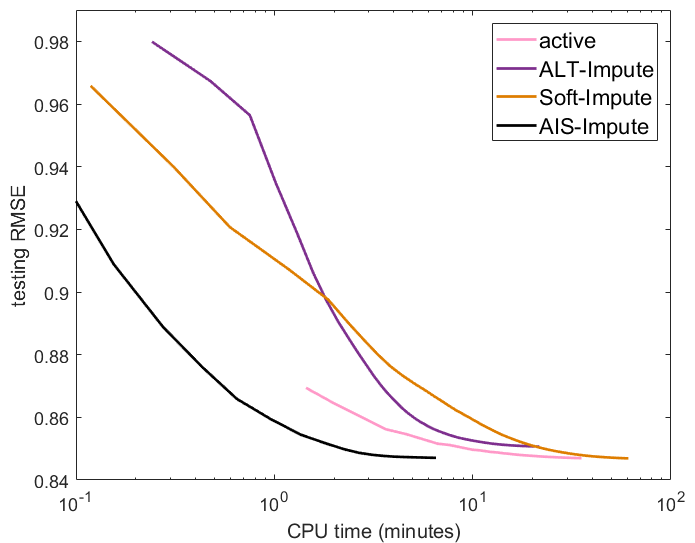}}
\qquad
\subfigure[$\lambda = \lambda_0/30$.]
{\includegraphics[width=0.24\textwidth]{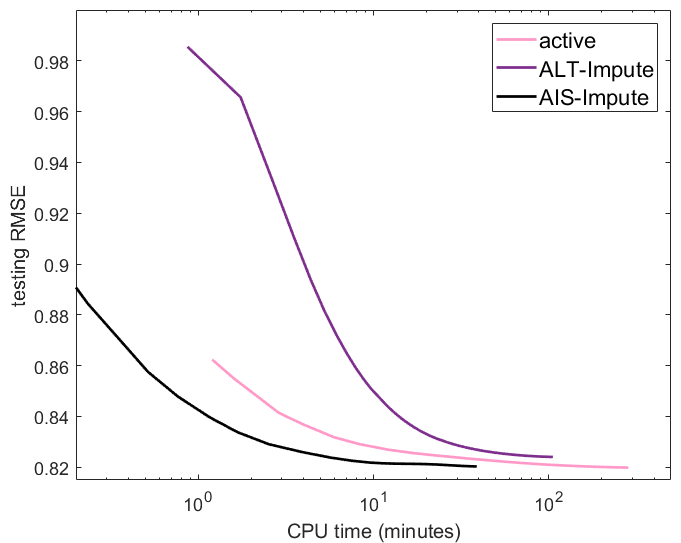}}

\vspace{-10px}
\caption{Testing RMSE vs CPU time (in minutes) 
	on the \textit{Netflix} data set, with various values for
	the regularization parameter $\lambda$.}
\label{fig:netfixRMSE}
\vspace{-15px}
\end{figure*}

\noindent
\textbf{Netflix}. 
The \textit{Netflix} data set
contains ratings of 480,189 users on 17,770 movies. 
$1\%$ of the ratings matrix are observed.
We randomly sample $50 \%$ of the observed ratings for training, and the rest for testing.

We only compare with
active subspace selection, ALT-Impute and Soft-Impute; while methods including
boost, TR, SSGD, APG are slow and not compared.
LMaFit solves 
a different 
optimization problem 
based on matrix factorization,
and has worse recovery performance than AIS-Impute. Thus,
it is also not compared.
%\footnote{\url{http://archive.ics.uci.edu/ml/datasets/Netflix+Prize}}, 
%The algorithm is stopped when the relative difference
%in objective values between consecutive iterations is smaller than $10^{-5}$.  
As in \cite{mazumder2010spectral}, 
several choices of the regularization parameter $\lambda$ 
are experimented.

Results are shown in Table~\ref{tab:netflix}.
As in previous experiments, the RMSEs and ranks obtained by the various algorithms are similar.
Figure~\ref{fig:netfixRMSE} shows the 
plot of testing RMSE versus CPU time.
As can be seen, AIS-Impute is again much faster. 

\begin{table}[ht]
\centering
\vspace{-5px}
\caption{Results on the \textit{Netflix} data set.
The regularization parameter $\lambda$ in \eqref{eq:mc} is set as $\lambda_0 / c$,
where $\lambda_0 = \NM{\SO{O}}{F}$.
Soft-Impute with $c = 30$ is not run as it is very slow.}
\vspace{-10px}
\begin{tabular}{c | c |c | c }
	\hline
	        &             & RMSE                     & rank \\ \hline
	$c= 10$ & active      & 0.894$\pm$0.001          & 3    \\ \cline{2-4}
	        & ALT-Impute  & 0.900$\pm$0.006          & 3    \\ \cline{2-4}
	        & Soft-Impute & \textbf{0.893$\pm$0.001} & 3    \\ \cline{2-4}
	        & AIS-Impute  & \textbf{0.893$\pm$0.001} & 3    \\ \hline
	$c= 20$ & active      & \textbf{0.847$\pm$0.001} & 14   \\ \cline{2-4}
	        & ALT-Impute  & 0.850$\pm$0.001          & 14   \\ \cline{2-4}
	        & Soft-Impute & \textbf{0.847$\pm$0.001} & 14   \\ \cline{2-4}
	        & AIS-Impute  & \textbf{0.847$\pm$0.001} & 14   \\ \hline
	$c= 30$ & active      & \textbf{0.820$\pm$0.001} & 116  \\ \cline{2-4}
	        & ALT-Impute  & 0.825$\pm$0.001          & 116  \\ \cline{2-4}
	        & AIS-Impute  & \textbf{0.820$\pm$0.001} & 116  \\ \hline
\end{tabular}
\label{tab:netflix}
\vspace{-15px}
\end{table}

\subsection{Grayscale Images}

In this section,
we perform experiments on images from 
\cite{hu2013fast}
(Figures~\ref{fig:gray:rice}-\ref{fig:gray:wall}).
The
pixels are normalized
to zero mean and unit variance.
Gaussian noise from $\mathcal{N}(0, 0.05)$ 
is then added.
In each  image,
50\% of the pixels 
are randomly sampled as observations (half for training and another half for validation).
The task is to fill in the remaining 80\% of the pixels.
The experiment is repeated five times.

\begin{figure}[ht]
\centering
\vspace{5px}
\subfigure[\textit{rice} (854 $\times$ 960).
\label{fig:gray:rice}]
{\includegraphics[width = 0.30 \columnwidth]{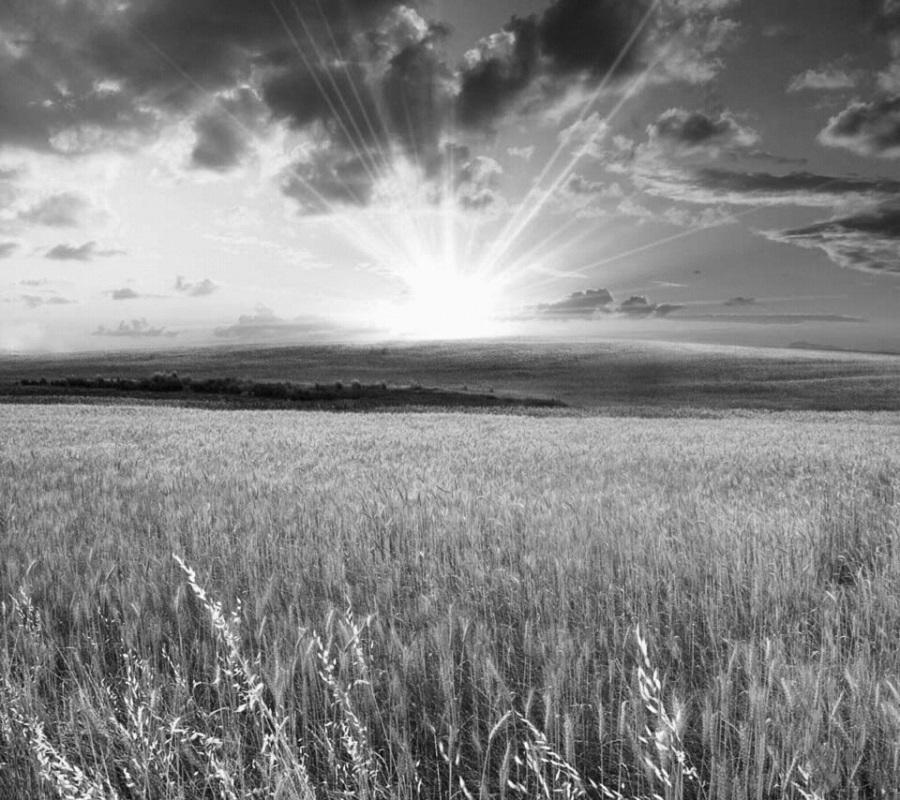}}
\subfigure[\textit{tree} (800 $\times$ 800).]
{\includegraphics[width = 0.30 \columnwidth]{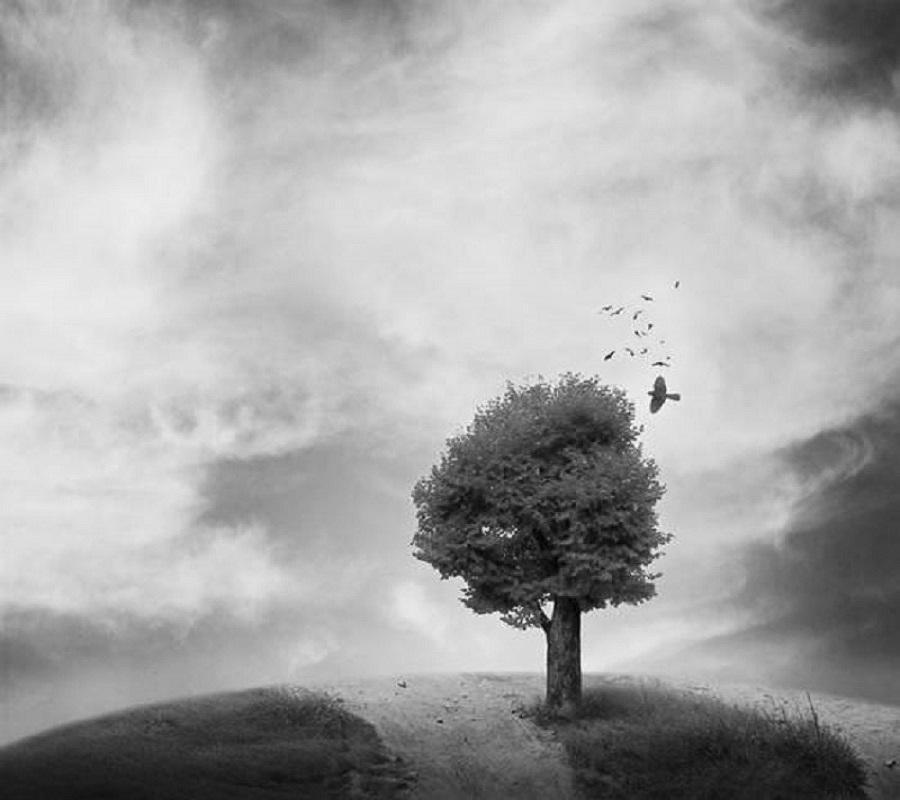}}
\subfigure[\textit{wall} (841 $\times$ 850).]
{\includegraphics[width = 0.30 \columnwidth]{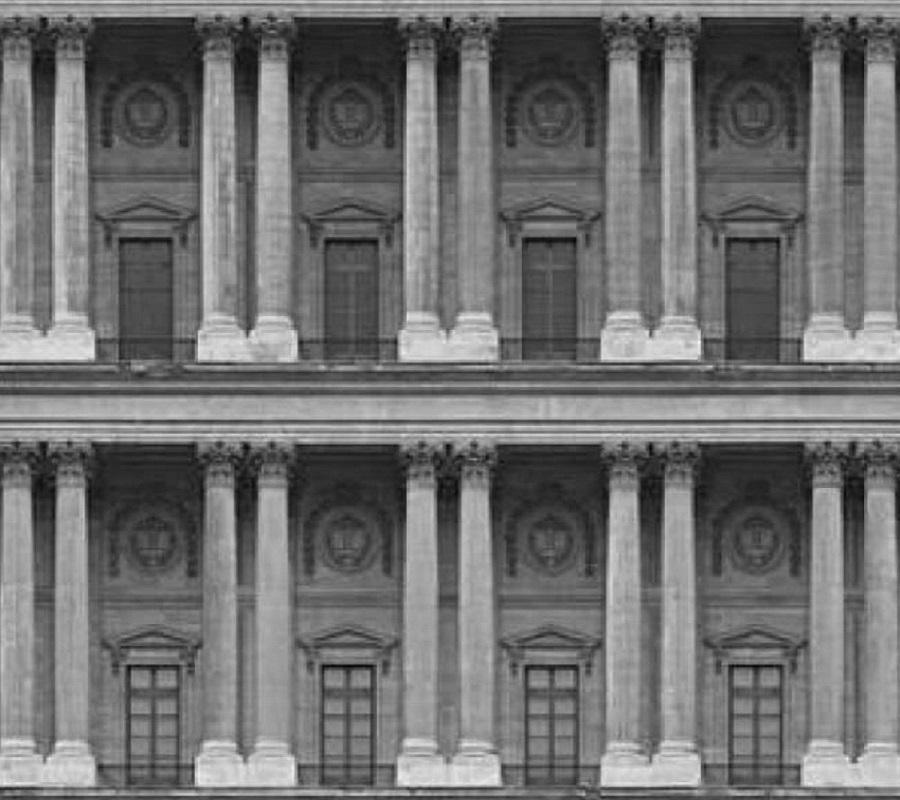}
	\label{fig:gray:wall}}

\vspace{-10px}
\caption{Grayscale images used for matrix completion.
	Their sizes are shown in the bracket.}
\vspace{-15px}
\end{figure}

Table~\ref{tab:gray} 
shows the testing RMSE and recovered rank.
As can be seen,
nuclear norm minimization
is better 
in terms of RMSE (in particular, AIS-Impute, ALT-Impute, APG and boost are the
best),
though
they require the use of higher ranks.
%among nuclear norm minimization methods.
Figure~\ref{fig:greyimg} shows the
running time.
As can be seen, AIS-Impute is consistently the fastest.

Figure~\ref{fig:recvimg} compares the difference between recovered images from all algorithms 
and the clean one on image \textit{tree}.
As can be seen,
the difference on SSGD is the largest.
Besides, 
LMaFit, ASD, and R1MP and SSGD also have larger errors than the rest.
%The comparison on image \textit{tree} and \textit{wall} are the same.
The observations on \textit{rice} and \textit{wall} are similar,
however,
due to space limitation,
we do not show them here.
%Others have similar appearance. 

\begin{figure*}[ht]
	\centering
	\vspace{-5px}
	\subfigure[\textit{rice}.]
	{\includegraphics[width= 0.24 \textwidth]{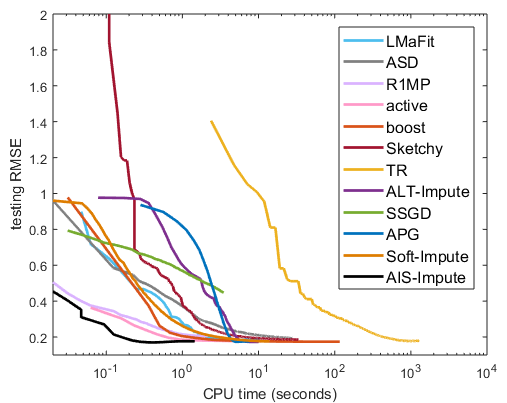}}
	\qquad
	\subfigure[\textit{tree}.]
	{\includegraphics[width= 0.24 \textwidth]{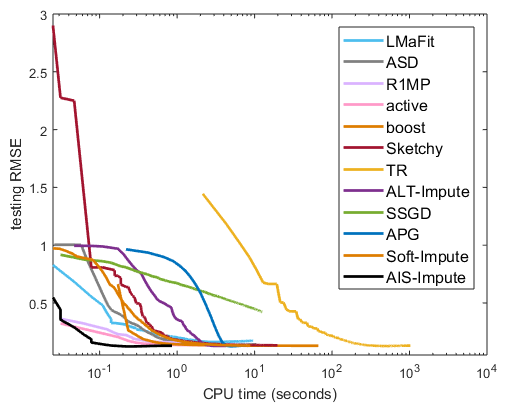}}
	\qquad
	\subfigure[\textit{wall}.]
	{\includegraphics[width= 0.24 \textwidth]{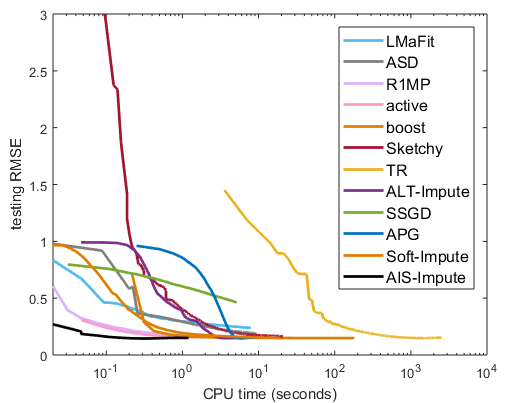}}	
	
	\vspace{-10px}
	\caption{Testing RMSE vs CPU time (in seconds) on grayscale images.}
	\label{fig:greyimg}
\end{figure*}

\begin{table*}[ht]
\centering
\vspace{-10px}
\caption{Matrix completion results on grayscale images. CPU time is in seconds.}
\vspace{-10px}
\begin{tabular}{c|c|c c|c c|c c}
	\hline
	   \multicolumn{2}{c|}{}    & \multicolumn{2}{c|}{\textit{rice}} & \multicolumn{2}{c|}{\textit{tree}} & \multicolumn{2}{c}{\textit{wall}} \\
	   \multicolumn{2}{c|}{}    &           RMSE           & rank    &           RMSE           & rank    &           RMSE           & rank   \\ \hline
	factorization &   LMaFit    &     0.189$\pm$0.002      & 45      &     0.174$\pm$0.013      & 25      &     0.238$\pm$0.004      & 50     \\ \cline{2-8}
	              &     ASD     &     0.194$\pm$0.020      & 45      &     0.142$\pm$0.004      & 25      &     0.189$\pm$0.012      & 50     \\ \cline{2-8}
	              &    R1MP     &     0.207$\pm$0.001      & 54      &     0.159$\pm$0.002      & 53      &     0.175$\pm$0.001      & 58     \\ \hline
	nuclear norm  &   active    & \textbf{0.176$\pm$0.002} & 100     & \textbf{0.130$\pm$0.002} & 71      & \textbf{0.150$\pm$0.002} & 101     \\ \cline{2-8}
	minimization  &    boost    & \textbf{0.176$\pm$0.004} & 94      & \textbf{0.130$\pm$0.002} & 60      & \textbf{0.149$\pm$0.002} & 93     \\ \cline{2-8}
	              &   Sketchy   & 0.186$\pm$0.007 & 89      &     0.134$\pm$0.002      & 41      &     0.157$\pm$0.008      & 88     \\ \cline{2-8}
	              &     TR      & \textbf{0.179$\pm$0.001} & 150     & \textbf{0.131$\pm$0.002} & 103     & \textbf{0.151$\pm$0.001} & 149     \\ \cline{2-8}
	              &    SSGD     &     0.447$\pm$0.058      & 96      &     0.424$\pm$0.037      & 60      &     0.463$\pm$0.023      & 96     \\ \cline{2-8}
	              &     APG     & \textbf{0.176$\pm$0.001} & 96      & \textbf{0.130$\pm$0.002} & 60      & \textbf{0.151$\pm$0.001} & 96     \\ \cline{2-8}
	              & Soft-Impute & \textbf{0.176$\pm$0.001} & 113     & \textbf{0.131$\pm$0.004} & 71      & \textbf{0.151$\pm$0.002} & 112     \\ \cline{2-8}
	              &  ALT-Impue  & \textbf{0.176$\pm$0.004} & 96      & \textbf{0.130$\pm$0.004} & 71      & \textbf{0.150$\pm$0.001} & 95     \\ \cline{2-8}
	              & AIS-Impute  & \textbf{0.176$\pm$0.001} & 96      & \textbf{0.219$\pm$0.002} & 70      & \textbf{0.150$\pm$0.001} & 95     \\ \hline
\end{tabular}
\label{tab:gray}
\end{table*}

\begin{figure*}[ht]
\centering
\vspace{-5px}
\includegraphics[width= 0.95 \textwidth]{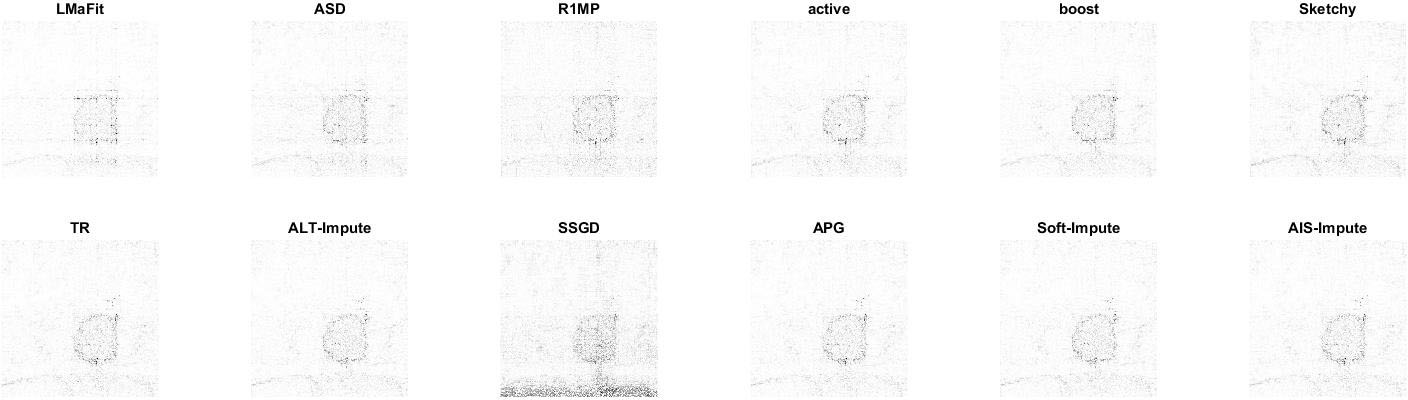}

\vspace{-10px}
\caption{Comparison on the difference between reconstructed images and the clean one on image \textit{tree}.}
\label{fig:recvimg}
\vspace{-5px}
\end{figure*}

%\vspace{-15px}

\subsection{Nonconvex Regularization}

In the following,
we first perform experiments on (i) synthetic data,
using the setup in Section~5.1 (with $m = 250$ and $1000$); and
(ii) the recommender data set \textit{MovieLens-100K},
using the setup in Section~5.2.
Three nonconvex low-rank regularizers are considered,
namely, truncated nuclear norm (TNN) \cite{hu2013fast},
capped-$\ell_1$ norm \cite{zhang2010analysis} and log-sum-penalty (LSP) \cite{candes2008enhancing}.

For TNN,
we compare three solvers:
(i) TNNR(APG): the solver used in \cite{hu2013fast}; 
(ii) 
IRNN \cite{lu2016nonconvex}, which is a 
more recent proximal algorithm 
%than \cite{hu2013fast}) 
for optimization with nonconvex low-rank matrix regularizers (including the TNN); and
(iii) the proposed AIS-Impute extension (denoted DC(AIS-Impute)),
which replaces the original APG solver in \cite{hu2013fast} for the subproblem in TNNR with AIS-Impute.
For capped-$\ell_1$ and LSP,
two solvers are considered:
(i) IRNN and (ii) the proposed AIS-Impute extension.
As a further baseline, we also compare with 
(convex) nuclear norm regularization
with the AIS-Impute solver.
Experiments are repeated five times.

Results are shown in Table~\ref{tab:tnnr}.
As can be seen, the errors
%on synthetic data and RMSE on \textit{MovieLens-100K} 
obtained by nonconvex regularization
(i.e., TNN, capped-$\ell_1$ and LSP)
are much lower than those from convex nuclear norm regularization, illustrating the
advantage of using nonconvex regularization.
The performance obtained by the different nonconvex regularizers are comparable.

\begin{table*}[ht]
\centering
\vspace{-5px}
\caption{Comparison of 
	nuclear norm regularization
	with various nonconvex regularizations.}
\vspace{-10px}
\begin{tabular}{ c | c | c | c  | c  }
	\hline
	\multicolumn{2}{c|}{}        &                \multicolumn{2}{c|}{NMSE}                & RMSE                     \\
	\multicolumn{2}{c|}{}        & synthetic ($m = 250$)      & synthetic ($m = 1000$)     & \textit{MovieLens-100K}  \\ \hline
	nuclear norm    & AIS-Impute       & 0.0098$\pm$0.0004          & 0.0092$\pm$0.0002          & 0.883$\pm$0.005          \\ \hline
	TNN             & TNNR(APG)        & \textbf{0.0081$\pm$0.0004} & \textbf{0.0073$\pm$0.0001} & \textbf{0.851$\pm$0.002} \\ \cline{2-5}
	& IRNN             & \textbf{0.0081$\pm$0.0004} & \textbf{0.0073$\pm$0.0001} & 0.853$\pm$0.004          \\ \cline{2-5}
	& DC(AIS-Impute) & \textbf{0.0081$\pm$0.0004} & \textbf{0.0073$\pm$0.0002} & \textbf{0.851$\pm$0.002} \\ \hline
	capped-$\ell_1$ & IRNN             & 0.0089$\pm$0.0005          & \textbf{0.0074$\pm$0.0001} & 0.853$\pm$0.002          \\ \cline{2-5}
	& DC(AIS-Impute)   & \textbf{0.0081$\pm$0.0004} & \textbf{0.0073$\pm$0.0002} & 0.852$\pm$0.005          \\ \hline
	LSP             & IRNN             & 0.0083$\pm$0.0004          & 0.0076$\pm$0.0001          & 0.852$\pm$0.006          \\ \cline{2-5}
	& DC(AIS-Impute)   & \textbf{0.0081$\pm$0.0004} & \textbf{0.0073$\pm$0.0002} & \textbf{0.850$\pm$0.002} \\ \hline
\end{tabular}
\label{tab:tnnr}
\vspace{-15px}
\end{table*}

\begin{figure*}[ht]
\centering
\vspace{-5px}
\subfigure[TNN.]
{\includegraphics[width= 0.235 \textwidth]{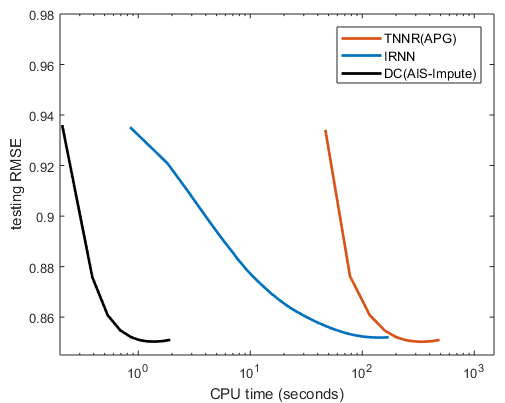}}
\qquad
\subfigure[capped-$\ell_1$.]
{\includegraphics[width= 0.24 \textwidth]{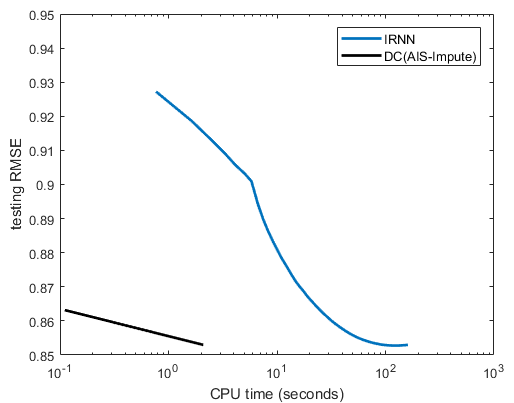}}
\qquad
\subfigure[LSP.]
{\includegraphics[width= 0.24 \textwidth]{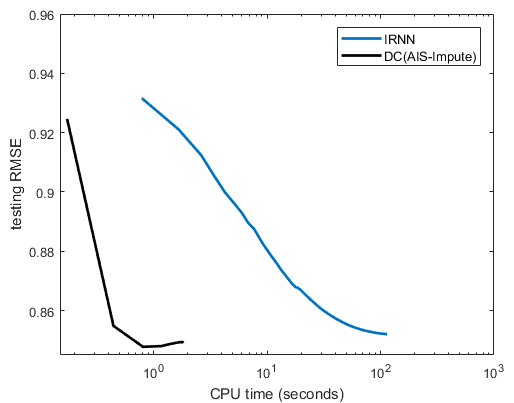}}	
\vspace{-10px}
\caption{Convergence of testing RMSE vs CPU time (in seconds) on the \textit{MovieLens-100K} data set.}
\label{fig:tnnr}
\vspace{-5px}
\end{figure*}

\subsection{Link Prediction}
\label{sec:explink}

Given a graph with $m$ nodes
and an incomplete
adjacency  matrix $O \in \{\pm 1\}^{m \times m}$,
link prediction aims to recover a low-rank matrix $X \in \R^{m \times m}$ 
such that
the signs of 
$X_{ij}$'s 
and $O_{ij}$'s
agree on most of the observed entries.
This is a binary matrix completion problem
\cite{chiang2014prediction}, and
we use the logistic loss $\ell(X_{ij}, O_{ij}) \equiv \log\left(1 + \exp(- X_{ij} O_{ij})\right)$ in \eqref{eq:mcgen}.

Experiments are performed on the
\textit{Epinions} 
and 
\textit{Slashdot} 
data sets
%\footnote{\url{https://snap.stanford.edu/data/}} 
\cite{chiang2014prediction}
(Table~\ref{tab:linkprediction}).
Each row/column of the matrix $O$ corresponds to a user
(users with fewer than two observations are removed).
For \textit{Epinions}, $O_{ij} = 1$ if user $i$ trusts user $j$, and $-1$ otherwise.
Similarly 
for \textit{Slashdot}, $O_{ij} = 1$ if user $i$ tags user $j$ as friend, and $-1$ otherwise.
As can be seen from previous sections, Boost, TR, SSGD, APG and Soft-Impute are all slow, and thus
they are not considered here.
Besides, LMaFit and ALT-Impute are designed for the square loss.
Thus, comparison is performed with 
(i) active subspace selection; (ii) AIS-Impute; and (iii) AltMin: the alternative minimization approach used in \cite{chiang2014prediction}.
We use 80\% of the ratings for training, 
	10\% for validation and the rest for testing.
Let $X$ be the recovered matrix, 
and the test set $\{\hat{O}_{ij}\}$ be indexed by the set $\hat{\Omega}$. 
For performance evaluation, 
we use the 
(i) testing accuracy
$\frac{1}{\NM{\hat{\Omega}}{1}} \sum_{(i,j) \in \hat{\Omega}} I(\text{sign}( X_{ij} ) =
\hat{O}_{ij} )$,
where $I(\cdot)$ is the indicator function;
%with $1_{\alpha} = 1$ if $\alpha = 1$ and $0$ otherwise; 
and
(ii) the rank of $X$.
To reduce statistical variability, 
experimental results are averaged over 5 repetitions.

\begin{table}[ht]
\centering
\vspace{-10px}
\caption{Data sets for link prediction.}
\vspace{-10px}
\begin{tabular}{c | c | c | c}
	\hline
	                  & \#rows & \#columns & \#signs \\ \hline
	\textit{Epinions} & 84,601 & 48,091    & 505,074 \\ \hline
	\textit{Slashdot} & 70,284 & 32,188    & 324,745 \\ \hline
\end{tabular}
\label{tab:linkprediction}
\vspace{-5px}
\end{table}

Results are shown in Table~\ref{tab:linkp:perf}
and 
Figure~\ref{fig:Epinions} shows the testing accuracy with CPU time. 
As can be seen,
active and AIS-Impute have slightly better accuracies than AltMin,
and AIS-Impute is the fastest.

\begin{table}[ht]
\centering
\vspace{-10px}
\caption{Performance on link prediction.}
\vspace{-10px}
\begin{tabular}{c | c|c | c}
	\hline
	                  &            &         accuracy         & rank \\ \hline
	\textit{Epinions} & active     & \textbf{0.939$\pm$0.002} & 12   \\ \cline{2-4}
	                  & AltMin     &     0.936$\pm$0.002      & 41   \\ \cline{2-4}
	                  & AIS-Impute & \textbf{0.940$\pm$0.001} & 12   \\ \hline
	\textit{Slashdot} & active     & \textbf{0.844$\pm$0.001} & 16   \\ \cline{2-4}
	                  & AltMin     &     0.839$\pm$0.002      & 39   \\ \cline{2-4}
	                  & AIS-Impute & \textbf{0.843$\pm$0.001} & 16   \\ \hline
\end{tabular}
\label{tab:linkp:perf}
\vspace{-20px}
\end{table}

\begin{table*}[ht]
\centering
\vspace{-5px}
\caption{Tensor completion results on the synthetic data.
	Here, sparsity is the proportion of observed entries, and
	post-processing
	time is in seconds.}
\vspace{-10px}
\begin{tabular}{c | c | c | c | c || c }
	\hline
               &                   &                \multicolumn{2}{c|}{NMSE}                &       & \\
               &                   & no post-processing               & with post-processing             & rank  & post-processing time      \\ \hline
  $m=125$      & APG               & \textbf{0.0162$\pm$0.0015} & \textbf{0.0100$\pm$0.0006} & 3,3,0 & 0.1       \\ \cline{2-6}
	(sparsity: 62.4\%) & Soft-Impute       & \textbf{0.0162$\pm$0.0014} & \textbf{0.0100$\pm$0.0005} & 3,3,0 & 0.1       \\ \cline{2-6}
               & AIS-Impute(exact) & \textbf{0.0161$\pm$0.0015} & \textbf{0.0100$\pm$0.0005} & 3,3,0 & 0.1       \\ \cline{2-6}
               & AIS-Impute        & \textbf{0.0159$\pm$0.0011} & \textbf{0.0099$\pm$0.0004} & 3,3,0 & 0.1       \\ \hline
  $m=500$      & APG               & \textbf{0.0166$\pm$0.0007} & \textbf{0.0105$\pm$0.0004} & 3,3,0 & 0.1       \\ \cline{2-6}
	(sparsity: 16.0\%) & Soft-Impute       & \textbf{0.0168$\pm$0.0007} & \textbf{0.0106$\pm$0.0004} & 3,3,0 & 0.1       \\ \cline{2-6}
               & AIS-Impute(exact) & \textbf{0.0167$\pm$0.0006} & \textbf{0.0104$\pm$0.0003} & 3,3,0 & 0.1       \\ \cline{2-6}
               & AIS-Impute        & \textbf{0.0167$\pm$0.0007} & \textbf{0.0105$\pm$0.0003} & 3,3,0 & 0.1       \\ \hline
  $m=2000$     & APG               & \textbf{0.0162$\pm$0.0013} & \textbf{0.0105$\pm$0.0006} & 3,3,0 & 0.5       \\ \cline{2-6}
	(sparsity: 3.9\%)  & Soft-Impute       & 0.0168$\pm$0.0016          & \textbf{0.0109$\pm$0.0011} & 3,3,0 & 0.4       \\ \cline{2-6}
               & AIS-Impute(exact) & \textbf{0.0161$\pm$0.0012} & \textbf{0.0104$\pm$0.0007} & 3,3,0 & 0.4       \\ \cline{2-6}
               & AIS-Impute        & \textbf{0.0161$\pm$0.0012} & \textbf{0.0104$\pm$0.0007} & 3,3,0 & 0.1       \\ \hline
\end{tabular}
\label{tab:tenperf}
\vspace{-10px}
\end{table*}

\begin{figure}[ht]
\centering
\subfigure[\textit{Epinions}.]
{\includegraphics[width=0.24\textwidth]{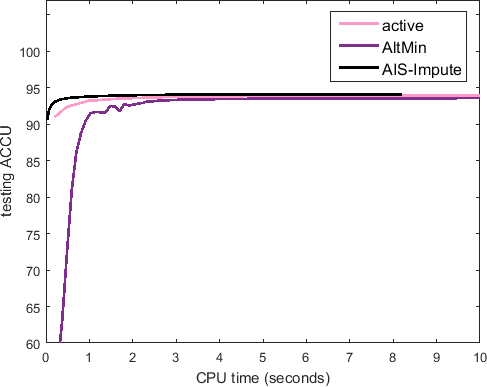}}
\subfigure[\textit{Slashdot}.]
{\includegraphics[width=0.235\textwidth]{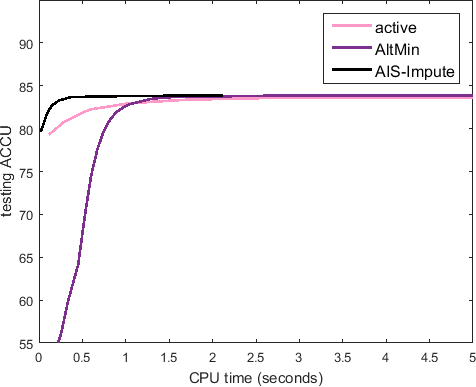}}

\vspace{-10px}
\caption{Testing accuracy vs CPU time (in seconds) on the \textit{Epinions} and \textit{Slashdot} data sets.}
\label{fig:Epinions}
\vspace{-10px}
\end{figure}

\subsection{Tensor Completion: Synthetic Data}
\label{sec:tensorsyn}

In this section, 
we perform tensor completion experiments with synthetic data.
The ground-truth data tensor 
(of size $m \times m \times 3$)
is generated as $\ten{O} = \ten{C} \times_1 A_1 \times_2 A_2 \times_3 A_3$,
where 
the elements of $A_1 \in \R^{m \times 3}$, $A_2 \in \R^{m \times 3}$, $A_3 \in \R^{3 \times 3}$
and
the core tensor $\ten{C} \in \R^{3 \times 3}$
are all sampled i.i.d. from the standard normal distribution $\mathcal{N}(0, 1)$,
and $\times_k$ is the $k$-mode product\footnote{The $k$-mode product of a tensor $\ten{X}$ and a matrix 
	$A$ is defined as $\ten{X} \times_k A = \left( \ten{X}_{\ip{k}} A \right)_{\ip{k}}$ \cite{kolda2009tensor}.}.
Thus,
$\ten{O}$ is low-rank for the first two mode but not for the third.
Noise 
$\ten{G}$,
with elements sampled i.i.d. from the normal distribution $\mathcal{N}(0, 0.05)$, is then added.
A total number of $\Omega = 45 m \log(m)$ random elements in $\ten{O}$ are observed.
	Half of them are used for training, and the other half for validation.
On testing, we perform evaluation on the unobserved entries and use the same criteria as in
Section~\ref{sec:matcomp:syn}, i.e., NMSE and recovered rank in each mode.

Similar to Section~\ref{sec:matcomp:syn}, 
we compare the following algorithms:
(i) APG;
(ii) extension of Soft-Impute to tensor completion,
which is based on Section~\ref{sec:extprox}; 
(iii) the proposed algorithm with exact SVD (AIS-Impute(exact));
and (iv) the proposed algorithm which uses power method to approximate SVT (AIS-Impute).
%Soft-Impute has not been extended to tensor completion, 
%and is thus not compared.
%\textcolor{red}{Besides, we also compare  with the ADMM approach 
%	in \cite{tomioka2010estimation}
%	(denoted ``ADMM(scaled)'').}

%On parameter tunning, 
As suggested in \cite{wimalawarne2014multitask},
we set $(\lambda_1, \lambda_2, \lambda_3)$ in
the scaled latent nuclear norm to
$(1, 1, \frac{\sqrt{m}}{\sqrt{3}}) \lambda$.
Thus, the only tunable parameter is $\lambda$,
which is obtained by grid search using the validation set.
We also vary $m$ in $\{ 125, 500, 2000 \}$.
Experimental results are averaged over 5 repetitions.

Results on NMSE and rank are shown in Table~\ref{tab:tenperf}.
As can be seen,
APG, Soft-Impute, AIS-Impute(exact) and AIS-Impute have comparable performance.
The plots of objective value vs time
and iterations are shown in Figure~\ref{fig:tenconv}.
In terms of iterations,
APG, AIS-Impute(exact) and AIS-Impute have similar behavior
as they all have $O(1/T^2)$ convergence rate.
These also agree with the matrix case in Section~\ref{sec:matcomp:syn}.
In terms of time,
as APG does not utilize the ``sparse plus low-rank'' structure,
it is slower than AIS-Impute(exact) and AIS-Impute. 
%ADMM(scale) is even slower than APG,
%as it only has slower convergence rate compared with APG.
AIS-Impute is the fastest,
as it has both fast $O(1/T^2)$ convergence rate and low per-iteration complexity.

\begin{figure}[ht]
	\centering
	\vspace{-5px}
	\subfigure[$m = 125$.]
	{\includegraphics[width = 0.2425\textwidth]{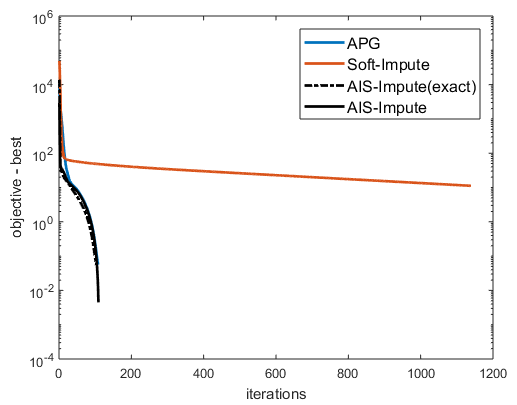}
		\includegraphics[width = 0.24\textwidth]{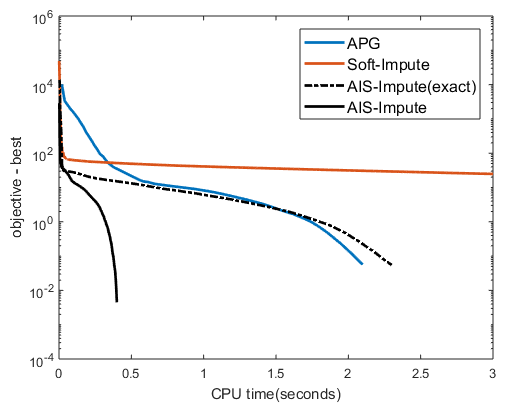}}
	\subfigure[$m = 500$.]
	{\includegraphics[width = 0.2425\textwidth]{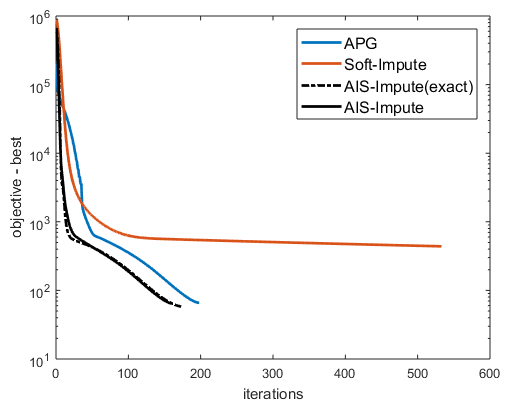}
		\includegraphics[width = 0.24\textwidth]{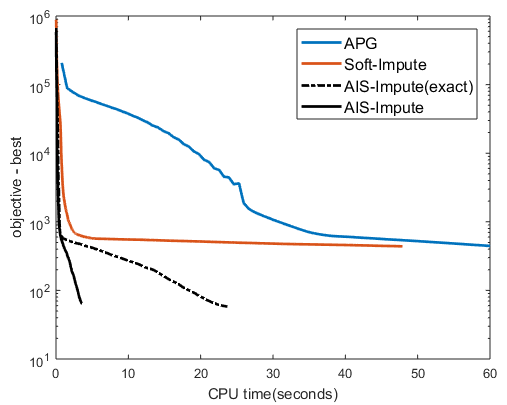}}
	\subfigure[$m = 2000$.]
	{\includegraphics[width = 0.24\textwidth]{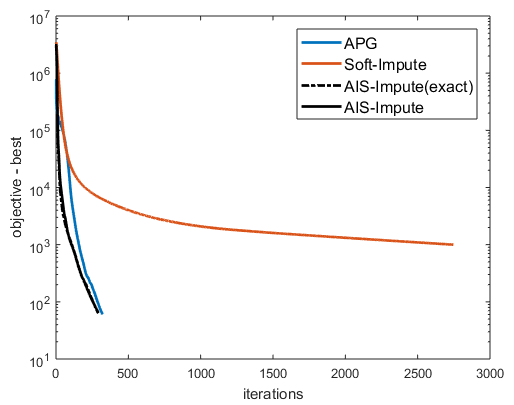}
		\includegraphics[width = 0.24\textwidth]{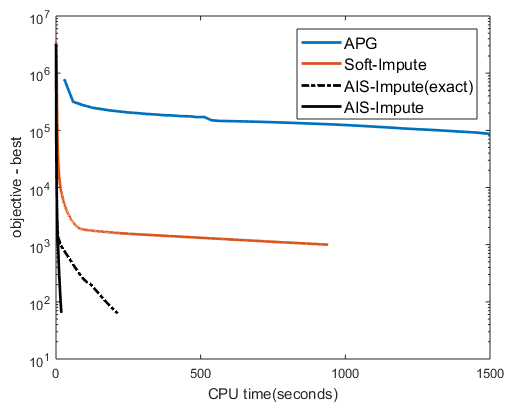}}
	
	\vspace{-10px}
	\caption{Convergence of objective value on the synthetic tensor data.
		Left: vs number of iterations;
		Right: vs CPU time (in seconds).}
	\label{fig:tenconv}
	\vspace{-10px}
\end{figure}

Performance with post-processing in Section~\ref{sec:postproc} is shown in
Table~\ref{tab:tenperf}.
As can be seen,
it is very efficient and improves NMSE.
Thus, 
we always perform post-processing in the sequel.
%Section~\ref{sec:multen}.

\subsection{Multi-Relational Link Prediction}
\label{sec:multen}

In this section, we
perform experiments on the \textit{YouTube} 
data set
%\footnote{\url{http://socialcomputing.asu.edu/datasets/YouTube}}
\cite{lei2009analysis}. 
It contains 15,088 users, and describes five types of user interactions:
contact,
number of shared friends, 
number of shared subscriptions, 
number of shared subscribers, 
and the number of shared favorite videos.
Thus, it forms a 
$15088 \times 15088 \times 5$
tensor,
with a total of $27,257,790$ nonzero elements.
Following \cite{chiang2014prediction}, 
we 
formulate multi-relational link prediction as a tensor completion problem.
%and assume that zeros are missing values and 
As the observations are real-valued,
we use the square loss in \eqref{eq:tc}.
Besides AIS-Impute (Algorithm~\ref{alg:AISImpute:ten}), 
we also compare with the following state-of-the-art non-proximal-based tensor completion algorithms:
(i)
geometric nonlinear CG for tensor completion (denoted ``GeomCG'') \cite{kressner2014low}:
% \footnote{\url{http://web.math.princeton.edu/~bartv/tensor_completion.html}}: 
a gradient descent approach with gradients restricted on the Riemannian manifold;
(ii)
An alternating direction method of multipliers approach (denoted ``ADMM(overlap)") \cite{tomioka2010estimation},
% \footnote{\url{https://github.com/ryotat/tensor}},
which solves the overlapping nuclear norm regularized tensor completion problem;
(iii) 
 fast low rank tensor completion (denoted ``FaLRTC'') \cite{liu2013tensor}:
%\footnote{\url{http://www.cs.rochester.edu/u/jliu/publications.html}}:
It smooths the overlapping nuclear norm 
and then solves the relaxed problem with accelerated gradient descent; and
(iv)
tensor completion by parallel matrix factorization
(denoted ``TMac'') \cite{Xu2013}:
%\footnote{\url{http://www.math.ucla.edu/~wotaoyin/papers/codes/TMac.zip}}:
An extension of LMaFit \cite{wen2012solving} to tensor completion,
which performs simultaneous low-rank matrix factorizations to all mode matricizations.

\noindent
\textbf{YouTube Subset.}
First,
we perform experiments on a small \textit{YouTube} subset,
obtained by random selecting 1000 users 
(leading to $12,101$ observations).
We use $50 \%$ of the observations for training, another $25 \%$ for validation and the remaining for testing.
%For GeomCG and TMac, as they are factorization approaches, the rank $(r_1, r_2, r_3)$ is tuned with grid $r_1, r_2 \in \{1,\dots,10\}$ and $r_3 \in \{ 0, 1, \dots, 5 \}$, \footnote{$\surd$ *** justification} and we set $r_1 = r_2$ as the tensor has same dimensions on the first two modes.  For ADMM and FaLRTC, they are based on the overlapping nuclear norm. As in \cite{liu2013tensor}, we keep the ratio of $(\lambda_1, \lambda_2, \lambda_3)$ on each mode as \footnote{$\surd$ *** rewrite. u should intro $\lambda$} $(1/\sqrt{1000}, 1/\sqrt{1000}, 1/\sqrt{5}) \lambda$, and tune $\lambda$ instead.  Finally, for AIS-Impute, as in \cite{wimalawarne2014multitask}, the same ratio on $(\lambda_1, \lambda_2, \lambda_3)$ is used, and then tune $\lambda$.
Let $\ten{X}$ be the recovered tensor,
and the testing ratings $\hat{O}_{ij}$ be indexed by the set $\hat{\Omega}$.
For performance evaluation, 
we use (i) the testing root mean squared error 
$\text{RMSE} = \sqrt{\NM{P_{\hat{\Omega}}(\ten{X} - \hat{\ten{O}})}{F}^2 / \NM{\hat{\Omega}}{1}}$; 
and (ii) rank of the unfolded matrix in each mode. 
The experiments are repeated five times.

Performance is shown in Table~\ref{tab:youtubesml}
and 
Figure~\ref{fig:youtubesml}
shows the time comparison.
ADMM(overlap) and FaLRTC have similar recovery performance, but are all very slow due to usage of the SVD.
As the overlapping nuclear norm is smoothed in FaLRTC,
its cannot exactly recover a low-rank tensor.
TMac is fast,
but has the worst recovery performance.
AIS-Impute enjoys fast speed and good recovery performance.

\begin{table}[ht]
\centering
\vspace{-10px}
\caption{Results on the \textit{YouTube} subset.
	The rank is for each mode.}
\label{tab:youtubesml}
\vspace{-10px}
\begin{tabular}{ c | c | c }
	\hline
	              & RMSE                     & rank          \\ \hline
	GeomCG        & 0.672$\pm$0.050          & 7, 7, 5       \\ \hline
	ADMM(overlap) & 0.690$\pm$0.030          & 142, 142, 5   \\ \hline
	FaLRTC        & 0.672$\pm$0.032          & 1000, 1000, 5 \\ \hline
	TMac          & 0.786$\pm$0.027          & 4, 4, 0       \\ \hline
	AIS-Impute    & \textbf{0.616$\pm$0.029} & 33, 33, 0     \\ \hline
\end{tabular}
\vspace{-5px}
\end{table}

%\begin{table}[ht]
%	\centering
%	\caption{Results on the \textit{YouTube} subset.
%		The best and comparable results (according to the pairwise t-test with 95\% confidence) are highlighted.}
%	\label{tab:youtubesml}
%	%	\vspace{-10px}
%	\begin{tabular}{ c | c | c | c c c}
%		\hline
%		& \multirow{2}{*}{RMSE}    & CPU time             & \multicolumn{3}{c}{rank   of mode} \\
%		&                          & (sec)                & 1    & 2    & 3                    \\ \hline
%		GeomCG        & 0.672$\pm$0.050          & 18.2$\pm$0.72        & 7    & 7    & 5                    \\ \hline
%		ADMM(overlap) & 0.690$\pm$0.030          & 215.3$\pm$14.2       & 142  & 142  & 5                    \\ \hline
%		FaLRTC        & 0.672$\pm$0.032          & 67.1$\pm$6.8         & 1000 & 1000 & 5                    \\ \hline
%		TMac          & 0.786$\pm$0.027          & \textbf{4.9$\pm$2.6} & 4    & 4    & 0                    \\ \hline
%		AIS-Impute    & \textbf{0.616$\pm$0.029} & 8.6$\pm$2.2          & 33   & 33   & 0                    \\ \hline
%	\end{tabular}
%\end{table}

\noindent
\textbf{Full YouTube Data.}
Next, we perform experiments on the full \textit{YouTube} data set
with the same setup.
As ADMM(overlap) and FaLRTC are too slow,
we only compare with GeomCG, TMac and AIS-Impute.
Experiments are repeated five times.

Results are shown in Table~\ref{tab:youtubelrg},
and
Figure~\ref{fig:youtubeful}
shows the time.
TMac has much worse performance than GeomCG and AIS-Impute.
GeomCG is based on the 
(nonconvex)
Turker decomposition, and
its convergence rate is unknown.
Moreover, 
its iteration time complexity has a worse dependency on the tensor rank than AIS-Impute 
($\prod_{i = 1}^{D}r_t^d$ vs $\sum_{i = 1}^D r_t^d$), and
thus GeomCG becomes very slow when the tensor rank is large.
Overall,
AIS-Impute has fast speed and good recovery performance.

\begin{table}[ht]
\centering
\vspace{-10px}
\caption{Results on the full \textit{YouTube} dataset.
The rank is for each mode.}
\label{tab:youtubelrg}
\vspace{-10px}
\begin{tabular}{ c | c | c }
	\hline
	           & RMSE                     & rank    \\ \hline
	GeomCG     & 0.388$\pm$0.001          & 51, 51, 5 \\ \hline
	TMac       & 0.611$\pm$0.007          & 10, 10, 0 \\ \hline
	AIS-Impute & \textbf{0.369$\pm$0.006} & 70, 70, 0 \\ \hline
\end{tabular}
\vspace{-5px}
\end{table}

\begin{figure}[ht]
\centering
\subfigure[data subset.]
{\includegraphics[width = 0.2375\textwidth]{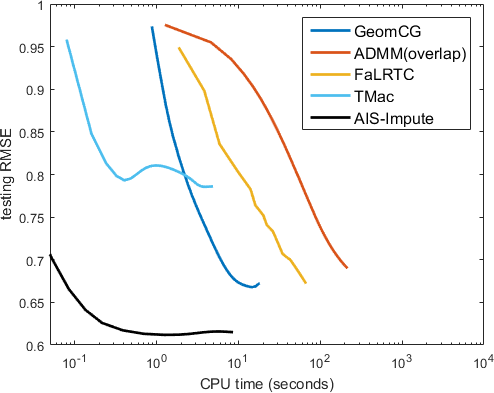}
	\label{fig:youtubesml}}
\subfigure[full data set.]
{\includegraphics[width = 0.235\textwidth]{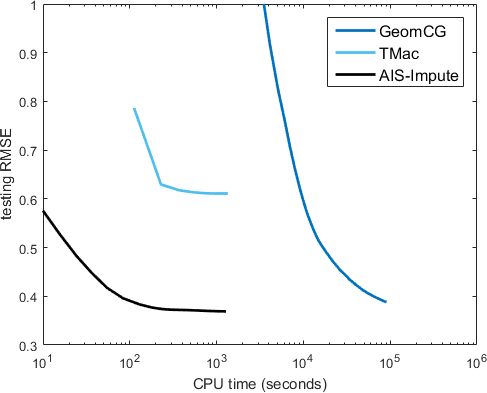}
	\label{fig:youtubeful}}

\vspace{-10px}
\caption{Testing RMSE vs CPU time on the \textit{Youtube} data set.}
\label{fig:youtube}
\vspace{-10px}
\end{figure}

%%%%%%%%%%%%%%%%%%%%%%%%%%%%%%%%%%%%%%

\section{Conclusion}

In this paper, we show that 
Soft-Impute, as a proximal algorithm,
can be accelerated  without losing 
the ``sparse plus low-rank'' structure crucial to its efficiency.
To further reduce the per-iteration time complexity,
we proposed an approximate-SVT scheme based on the power method.
Theoretical analysis shows that the proposed algorithm still enjoys the fast $O(1/T^2)$
convergence rate.
We also extend the proposed algorithm for low-rank tensor completion 
with the scaled latent nuclear norm regularizer.
Again, the ``sparse plus low-rank'' structure 
can be preserved
and 
a convergence rate of
$O(1/T^2)$ can be obtained.
The proposed algorithm can be further extended to 
nonconvex low-rank regularizers,
which have better empirical performance than the convex nuclear norm regularizer.
Extensive experiments on both synthetic and real-world data sets 
show that the proposed algorithm is much faster than the state-of-the-art.

\section*{Acknowledgment}

This research was supported in part by
the Research Grants Council of the Hong Kong Special Administrative Region
(Grant 614513).

% ----------------------------------------------------------------------
% bibliography
% ----------------------------------------------------------------------
%\newpage

{
\bibliographystyle{IEEEtran}
\bibliography{bib}
}

\vspace{-30px}

\begin{IEEEbiography}[{\includegraphics[width = 1\textwidth]{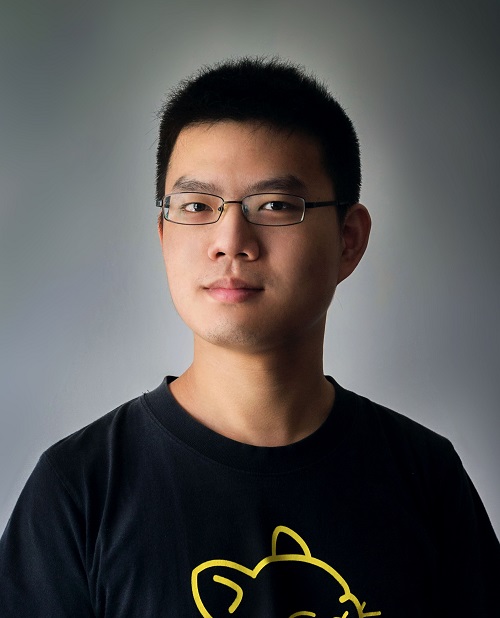}}]{Quanming Yao}
obtained his Ph.D from Computer Science 
and Engineer Department of Hong Kong University of Science and Technology (HKUST) in 2018,
and bachelor degree in Electronic and Information Engineering from the Huazhong University of Science and Technology (HUST) in
2013. His research interests focus on machine learning.
Currently, he is a research scientist in 4Paradigm Inc. (Beijing, China). 
He was awarded as Qiming star of HUST in 2012,
Tse Cheuk Ng Tai research excellence prize from HKUST in 2015
and Google PhD fellowship (machine learning) in 2016.
\end{IEEEbiography}

\vspace{-30px}

\begin{IEEEbiography}[{\includegraphics[width = 1\textwidth]{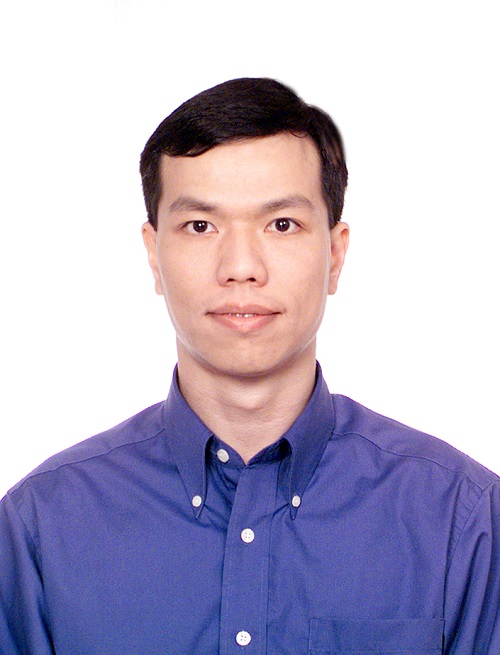}}]{James T. Kwok}
received the PhD degree in
computer science from the Hong Kong University
of Science and Technology in 1996. He was
with the Department of Computer Science,
Hong Kong Baptist University, Hong Kong, as
an assistant professor. He is currently a professor
in the Department of Computer Science and
Engineering, Hong Kong University of Science
and Technology. His research interests include
kernel methods, machine learning, example recognition,
and artificial neural networks. He
received the IEEE Outstanding 2004 Paper Award, and the Second
Class Award in Natural Sciences by the Ministry of Education, People’s
Republic of China, in 2008. He has been a program cochair for a number
of international conferences, and served as an associate editor for
the IEEE Transactions on Neural Networks and Learning Systems
and Neurocomputing journal.
\end{IEEEbiography}

%\end{document}

\cleardoublepage
\appendices

\section{Proofs}
\label{app:proof}

%%%%%%%%%%%%%%%%%%%%%%%%%%%%%%%%%%%%%%%%

\subsection{Proposition~\ref{pr:lipmac}}
\label{app:lipmac}

\begin{proof}
For any $X, Y \in \R^{m \times n}$,
\begin{align}
& \NM{\nabla f(X) - \nabla f(Y)}{F}^2
\notag 
\\
& = \sum_{(i,j) \in \Omega}\left[ \frac{d \ell(X_{ij}, O_{ij})}{d X_{ij}} - \frac{d \ell(Y_{ij}, O_{ij})}{d Y_{ij}} \right]^2
\notag
\\
& \le \sum_{(i,j)\in\Omega} \rho^2 (X_{ij} - Y_{ij})^2
\label{eq:temp16}
\\
& \le \rho^2 \NM{X - Y}{F}^2,
\notag
\end{align}
where \eqref{eq:temp16} follows from the fact that $\ell$ is $\rho$-Lipschitz smooth.
Thus, $f(X)$ is $\rho$-Lipschitz smooth.
\end{proof}

%%%%%%%%%%%%%%%%%%%%%%%%%%%%%%%%%%%%%%%%%
%
%\subsection{Proposition~\ref{lem:dualopt}}
%
%\begin{proof}
%First we introduce the following lemma.
%\begin{lemma} \label{lem:temp1}
%\cite{golub2012matrix}
%For two matrices $\hat{X} = \hat{U}\hat{\Sigma}\hat{V}^{\top} \in \R^{m \times n}$ and
%$\bar{X} = \bar{U}\bar{\Sigma}\bar{V}^{\top} \in \R^{m \times n}$, 
%$\tr{\hat{X}^{\top}\bar{X}} \le \sum_{i = 1}^m \sigma_i(\hat{X}) \sigma_i(\bar{X})$,
%and the equality holds only when $\hat{U} = \bar{U}$ and $\hat{V} = \bar{V}$.
%\end{lemma}
%
%Note from \eqref{eq:dualSVT} that
%\begin{align}
% W_* = \arg\min_{\NM{W}{\infty} \le 1}
%\frac{1}{2}\NM{W}{F}^2 - \frac{1}{\lambda} \tr{W^{\top} \tZ_t}.
%\label{eq:temp13}
%\end{align}
%From Lemma~\ref{lem:temp1} and \eqref{eq:temp13}, we have
%\footnote{$\surd$ I replace it with $\inf$ 
%	*** a min *problem* cannot be greater than another *min* problem.... u mean arg?}
%\begin{align}
%& \inf_{\NM{W}{\infty} \le 1} \frac{1}{2}\NM{W}{F}^2 - \frac{1}{\lambda} \tr{W^{\top} \tZ_t}
%\notag \\
%& \ge  \sum_{i = 1}^m \inf_{\sigma_i(W) \le 1} \frac{1}{2} \sigma_i^2(W) - \frac{1}{\lambda} \sigma_i(W) \sigma_i(\tZ_t).
%\label{eq:temp14}
%\end{align}
%Thus, $W$ must have the same singular vectors with $\tZ_t$.
%As \eqref{eq:temp14} is a simple quadratic programming problem,
%it is easy to see that the optimal $\sigma_i(W) = \min(\lambda, \sigma_i(\tZ_t))$,
%then $W_* = U \min(\Sigma, \lambda I) V^{\top}$.
%\end{proof}

%%%%%%%%%%%%%%%%%%%%%%%%%%%%%%%%%%%%%%%%

\subsection{Proposition~\ref{pr:approGSVT}}
\label{app:approGSVT}

\begin{proof}
	First,
	we introduce the following theorem.
	
	\begin{theorem}
		[Separation theorem	\cite{golub2012matrix}]
		\label{thm:sepsv}
		Let $A \in \R^{m \times n}$ and $B \in \R^{m \times r}$ with $B^{\top} B = I$.  Then
		\begin{align*}
		\sigma_{i}\left(  B^{\top} A \right) 
		\le \sigma_i(A),
		\;\text{for}\; 
		i = 1, \dots, \min(r, n).
		\end{align*}
		%	where $t = m + n - r$.
	\end{theorem}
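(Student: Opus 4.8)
The plan is to prove the inequality via the variational (Courant--Fischer) characterization of singular values, exploiting the fact that left-multiplication by $B^{\top}$ is a contraction whenever $B$ has orthonormal columns. Recall that for any matrix $M \in \R^{p \times q}$ and any index $i \le \min(p,q)$, the $i$th largest singular value admits the min--max representation
\begin{align*}
\sigma_i(M) = \max_{\substack{\mathcal{S} \subseteq \R^q \\ \dim \mathcal{S} = i}} \; \min_{\substack{x \in \mathcal{S} \\ \|x\| = 1}} \|M x\|,
\end{align*}
where $\mathcal{S}$ ranges over $i$-dimensional subspaces. First I would apply this to $M = B^{\top} A \in \R^{r \times n}$ and to $M = A \in \R^{m \times n}$, so that both $\sigma_i(B^{\top}A)$ and $\sigma_i(A)$ are expressed as maxima over the \emph{same} family of $i$-dimensional subspaces $\mathcal{S} \subseteq \R^n$, for every $i \le \min(r,n)$.

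The key estimate is that $B^{\top}$ does not increase Euclidean length. Since $B^{\top} B = I$, the matrix $BB^{\top}$ is the orthogonal projection onto the column space of $B$; hence for any $y \in \R^m$,
\begin{align*}
\|B^{\top} y\|^2 = y^{\top} B B^{\top} y \le y^{\top} y = \|y\|^2,
\end{align*}
because $BB^{\top} \preceq I$. Applying this with $y = Ax$ gives $\|B^{\top} A x\| \le \|A x\|$ for every $x \in \R^n$. Consequently, for any fixed $i$-dimensional subspace $\mathcal{S}$, choosing the unit vector $x^{\star} \in \mathcal{S}$ that minimizes $\|Ax\|$ over the unit sphere of $\mathcal{S}$ yields
\begin{align*}
\min_{\substack{x \in \mathcal{S} \\ \|x\| = 1}} \|B^{\top} A x\| \le \|B^{\top} A x^{\star}\| \le \|A x^{\star}\| = \min_{\substack{x \in \mathcal{S} \\ \|x\| = 1}} \|A x\|.
\end{align*}
Taking the maximum over all $i$-dimensional $\mathcal{S} \subseteq \R^n$ on both sides and invoking the min--max representation then gives $\sigma_i(B^{\top}A) \le \sigma_i(A)$ for $i = 1, \dots, \min(r,n)$, which is the claim.

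An alternative route, which I would mention as a cross-check, passes through the positive-semidefinite ordering: from $BB^{\top} \preceq I$ one gets $A^{\top} B B^{\top} A \preceq A^{\top} A$, and Weyl's monotonicity theorem for eigenvalues of symmetric matrices yields $\lambda_i(A^{\top} B B^{\top} A) \le \lambda_i(A^{\top} A)$; taking square roots recovers the inequality since $\sigma_i(M)^2 = \lambda_i(M^{\top} M)$. The only point requiring care in either argument is bookkeeping on the index range: $B^{\top} A$ is $r \times n$ and therefore has exactly $\min(r,n)$ singular values, while $r \le m$ guarantees $\min(r,n) \le \min(m,n)$, so $\sigma_i(A)$ is genuinely defined and comparable for each such $i$. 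There is no substantive obstacle here; the statement is a standard interlacing/separation fact, and the main task is simply to align the two min--max problems over the common subspace family and to justify the contraction inequality for $B^{\top}$.
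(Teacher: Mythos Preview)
Your argument is correct: the Courant--Fischer min--max characterization combined with the contraction property $\|B^{\top}y\| \le \|y\|$ (from $BB^{\top} \preceq I$) yields the inequality cleanly, and your alternative via $A^{\top}BB^{\top}A \preceq A^{\top}A$ and Weyl monotonicity is an equally valid cross-check. The index bookkeeping is also handled properly.

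There is nothing to compare against, however: the paper does not prove this statement. It is introduced in the appendix as a cited result from Golub and Van Loan (\emph{Matrix Computations}) and used as a black box in the proof of Proposition~3.2. You have supplied a self-contained proof where the paper simply invokes the reference.
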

	Let the SVD of $Z$ be $U \Sigma V^{\top}$. 
	$Z$ can then be rewritten as
	\begin{equation} \label{eq:tmp2}
	Z = 
	\left[ U_{\breve{k}_t}; U_{\bot} \right] 
	\begin{bmatrix}
	\Sigma_{\breve{k}_t} & \\
	& \Sigma_{\bot}
	\end{bmatrix}
	\left[ V_{\breve{k}_t}; V_{\bot} \right]^{\top},
	\end{equation} 
	where $U_{\breve{k}_t}$ contains the $\breve{k}_t$ leading columns of $U$, 
	and $U_{\bot}$ the remaining columns.
	Similarly,
	$\Sigma_{\breve{k}_t}$ (resp.
	$V_{\breve{k}_t}$) contains the $\breve{k}_t$ leading eigenvalues (resp. columns) of 
	$\Sigma$ (resp.
	$V$).
	Then, let
	\begin{align}
	\tilde{u}_i =  Q^{\top} u_i 
	\text{\quad and \quad}
	\tilde{v}_i = v_i,
	\label{eq:temp111}
	\end{align}
	where $u_i$ (resp. $v_i$) is the $i$th column of $U$ (resp. $V$).
	For $i = 1, \cdots, \breve{k}_t$, we have
	\begin{eqnarray}
	\tilde{u}_i^{\top} \left(  Q^{\top} Z \right)  \tilde{v}_i
	& = &
	U_i^{\top} \left(  Q Q^{\top} \right) 
	Z V_i
	\notag
	\\
	& = &
	U_i^{\top} Z V_i
	\label{eq:temp32}
	\\
	& =  &
	\sigma_i(Z),
	\label{eq:temp888}
	\end{eqnarray}
	where \eqref{eq:temp32} is due to $\Span{ U_{\breve{k}_t} } \subseteq \Span{Q}$.
	Hence,
	\begin{eqnarray}
	\sigma_i\left( Q^{\top} Z \right)  
	& = & \sigma_i(Z),
	\text{\;for\;}
	i = 1, \cdots, \breve{k}_t
	\label{eq:temp28}
	\end{eqnarray}
	From Theorem~\ref{thm:sepsv},
	by substituting $Q = B$ and $A = Z$, we have
	$\sigma_i( Q^{\top} Z )  \le \sigma_i(Z)$.
	Combining with \eqref{eq:temp28},
	we obtain that 
	the rank-$\breve{k}_t$ SVD of $Q^{\top} Z$ is
	$( Q^{\top} U_{\breve{k}_t} )  \Sigma_{\breve{k}_t} V_{\breve{k}_t}^{\top}$,
	with the corresponding left and right singular vectors contained in $Q^{\top}
	U_{\breve{k}_t}$ and
	$V_{\breve{k}_t}$, respectively.

	Again,
	by Theorem~\ref{thm:sepsv},
	we have 
	\begin{align*}
	\sigma_{\breve{k}_t + 1}\left( Q^{\top} Z \right) 
	\le 
	\sigma_{\breve{k}_t + 1}(Z)
	\le \mu.
	\end{align*}
	Besides, using (\ref{eq:tmp2}),
	\begin{align*}
	\sigma_i\left( Q^{\top} Z \right) 
	=  \max_{ u , v } 
	u \left(  Q^{\top} U_{\breve{k}_t} \Sigma_{\breve{k}_t}  V_{\breve{k}_t}^{\top} 
	+ Q^{\top} U_{\bot} \Sigma_{\bot}  V_{\bot}^{\top} \right)  
	v.
	\notag
	\end{align*}
	Since the first $\breve{k}_t$ singular values are from the term
	$Q^{\top} U_{\breve{k}_t} \Sigma_{\breve{k}_t} V_{\breve{k}_t}$,
	then
	\begin{align}
	\sigma_{\breve{k}_t + 1}\left( Q^{\top} Z \right) 
	= \max_{u, v}
	u^{\top} 
	\left(  Q^{\top} U_{\bot} \Sigma_{\bot}  V_{\bot}^{\top} \right) 
	v
	\le \mu.
	\label{eq:temp36}
	\end{align}
	Then,
	\begin{align}
	& \svt_{\mu r} (Q^{\top} Z)
	\notag
	\\
	& = \svt_{\mu r}
	\left( 
	Q^{\top} U_{\breve{k}_t} \Sigma_{\breve{k}_t} V_{\breve{k}_t}^{\top}
	+ Q^{\top} U_{\bot} \Sigma_{\bot}  V_{\bot}^{\top}
	\right) 
	\notag
	\\
	& = \svt_{\mu r}
	\left( Q^{\top} U_{\breve{k}_t} \Sigma_{\breve{k}_t}  V_{\breve{k}_t}^{\top} \right) 
	+
	\svt_{\mu r}
	\left( Q^{\top} U_{\bot} \Sigma_{\bot}  V_{\bot}^{\top} \right) 
	\label{eq:temp37}
	\\
	& = \svt_{\mu r}
	\left( Q^{\top} U_{\breve{k}_t} \Sigma_{\breve{k}_t}  V_{\breve{k}_t}^{\top} \right) .
	\label{eq:temp34}
	\end{align}
	where \eqref{eq:temp37} follows from that $Q^{\top} U_{\breve{k}_t}$ (resp. $V_{\breve{k}_t}$) is orthogonal to $Q U_{\bot}$ (resp. $V_{\bot}$).
	\eqref{eq:temp36} shows that there are only $\breve{k}_t$ singular values in $Q^{\top} Z$ larger than $\mu$.
	Thus, $\svt_{\mu r}{ Q^{\top} U_{\bot} \Sigma_{\bot}  V_{\bot}^{\top} } = 0$
	and we get \eqref{eq:temp34}.
	Finally,
	\begin{align}
	Q \svt_{\mu r}
	\left( Q^{\top} Z \right) 
	& = Q \left(  Q^{\top} U_{\breve{k}_t} \svt_{\mu r}{\Sigma_{\breve{k}_t}} V_{\breve{k}_t}^{\top} \right)  
	\notag
	\\
	& = U_{\breve{k}_t} \svt_{\mu r} 
	\left( \Sigma_{\breve{k}_t} \right) 
	V_{\breve{k}_t}^{\top}
	\label{eq:temp35}
	\\
	&
	= \svt_{\mu r} (Z),
	\label{eq:temp33}
	\end{align}
	where \eqref{eq:temp35} comes from $\Span{U_{\breve{k}_t}} \subseteq \Span{Q}$;
	\eqref{eq:temp33}
	comes from that rank-$\breve{k}_t$ SVD of $Z$ is 
	$U_{\breve{k}_t} \Sigma_{\breve{k}_t} V_{\breve{k}_t}^{\top}$
	and $Z$ only has $\breve{k}_t$ singular values larger than $\mu$.
\end{proof}

\subsection{Proposition~\ref{pr:inexact}}
\label{app:inexact}

Before proof of Proposition~\ref{pr:inexact},
we first 
introduce some Lemmas
(Lemma~\ref{lem:expaness}, \ref{lem:temp1}, \ref{lem:powermethod} and \ref{pr:temp2})
and Propositions
(Proposition~\ref{pr:subbound} and \ref{pr:proxrate}).

\begin{lemma}[\cite{cai2010singular}] \label{lem:expaness}
For any matrices $A$ and $B$,
	$\FN{\svt_\l(A) - \svt_\l(B)} \le \FN{A - B}$.
\end{lemma}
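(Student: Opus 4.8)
The plan is to recognize $\svt_\l$ as the proximal operator of the convex function $X \mapsto \l\NM{X}{*}$ and then invoke the firm nonexpansiveness that all such proximal maps enjoy. Indeed, by the definition \eqref{eq:svt} (and Lemma~\ref{lem:svt}), $\svt_\l(A)=\arg\min_X \tfrac12\FN{X-A}^2+\l\NM{X}{*}=\prox{\l\NM{\cdot}{*}}{A}$, and likewise $\svt_\l(B)=\prox{\l\NM{\cdot}{*}}{B}$. Since $\NM{\cdot}{*}$ is a proper, closed, convex function, this identification is essentially all that is needed.

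First I would write $X_A=\svt_\l(A)$ and $X_B=\svt_\l(B)$, and use the first-order optimality conditions of the (strongly convex) problems defining them: $0\in X_A-A+\l\,\partial\NM{X_A}{*}$, i.e.\ $A-X_A\in\l\,\partial\NM{X_A}{*}$, and similarly $B-X_B\in\l\,\partial\NM{X_B}{*}$. Monotonicity of the subdifferential of the convex map $\NM{\cdot}{*}$ then gives $\ip{(A-X_A)-(B-X_B),\,X_A-X_B}\ge 0$, which I would rearrange into the ``firm nonexpansiveness'' inequality $\FN{X_A-X_B}^2\le\ip{A-B,\,X_A-X_B}$. Applying Cauchy--Schwarz to the right-hand side yields $\FN{X_A-X_B}^2\le\FN{A-B}\,\FN{X_A-X_B}$, and dividing through by $\FN{X_A-X_B}$ (the claim is trivial when this quantity is zero) gives $\FN{\svt_\l(A)-\svt_\l(B)}\le\FN{A-B}$, as desired.

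I do not anticipate a serious obstacle: the only step deserving care is the passage from the optimality condition to the subdifferential inclusion for the nuclear norm, which is the standard fact that the minimizer of a strongly convex proximal objective is characterized by $0$ lying in the corresponding subdifferential; equivalently, one can simply cite that the proximal operator of a proper closed convex function is firmly nonexpansive. An alternative, more computational route would use the closed form $\svt_\l(A)=U(\Sigma-\l I)_+V^\top$ together with perturbation estimates for singular values and vectors, but that is messier and offers no real advantage over the proximal-operator argument.
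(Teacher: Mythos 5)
Your proof is correct and complete: identifying $\svt_\l$ as $\prox{\l\NM{\cdot}{*}}{\cdot}$ and deriving firm nonexpansiveness from the subdifferential inclusions and monotonicity is exactly the standard argument. The paper itself gives no proof of this lemma---it is quoted from \cite{cai2010singular}---and your derivation matches the one in that reference, so there is nothing to flag.
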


Let $Z_t^* \equiv \svt_{\mu \l}(\tZ_t)$, $\beta_t \equiv \NM{\tZ_t}{F}$
and $\eta_t = \frac{\sigma_{k + 1}(\tZ_t)}{\sigma_k (\tZ_t)}$.

\begin{lemma}[\cite{halko2011finding}] \label{lem:powermethod} 
	Let the input to Algorithm~\ref{alg:powermethod} be $\tZ_t$,
	and its top $k$ left singular vectors be contained in $U_k$.
	Then, for $j = 0, 1, 2, \dots$,
	\[ \NM{Q_{j} {Q_{j}}^{\top} - U_k U_k^{\top}}{F} \le \eta_t^j \alpha_t,\]
	where $\alpha_t = \NM{Q_0 {Q_0}^{\top} - U_k U_k^{\top}}{F}$
	and $Q_0$ is the span of $\tZ_t R_t$.
\end{lemma}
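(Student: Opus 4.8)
The plan is to read Lemma~\ref{lem:powermethod} as the classical convergence estimate for orthogonal (subspace) iteration, applied here to the symmetric positive semidefinite matrix $M \equiv \tZ_t \tZ_t^{\top}$. Its eigenvalues are $\sigma_1(\tZ_t)^2 \ge \sigma_2(\tZ_t)^2 \ge \cdots$, and its eigenvectors are exactly the left singular vectors of $\tZ_t$, so that the leading $k$-dimensional eigenspace is $\Span{U_k}$. The first step is the observation that QR factorization does not change the column space, so the recursion of Algorithm~\ref{alg:powermethod} gives $\Span{Q_j} = \Span{M Q_{j-1}} = \Span{M^j Q_0}$, with $\Span{Q_0} = \Span{\tZ_t R_t}$. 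Hence the quantity to be bounded is the Frobenius distance between the orthogonal projector onto the power-iterated subspace $\Span{M^j Q_0}$ and the projector onto the target eigenspace $\Span{U_k}$.

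Next I would pass to principal angles. For subspaces of equal dimension with orthogonal projectors $P$ and $P'$ one has $\FN{P - P'} = \sqrt{2}\,\FN{\sin\Theta}$, where $\Theta$ collects the principal angles; in particular $\alpha_t = \sqrt{2}\,\FN{\sin\Theta_0}$. Writing $M = U_k \Sigma_k^{2} U_k^{\top} + U_{\bot} \Sigma_{\bot}^{2} U_{\bot}^{\top}$ and decomposing $Q_0 = U_k A + U_{\bot} B$ with $A = U_k^{\top} Q_0$ and $B = U_{\bot}^{\top} Q_0$, the column space of $M^j Q_0$ is spanned by $U_k \Sigma_k^{2j} A + U_{\bot} \Sigma_{\bot}^{2j} B$. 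Assuming $A$ is invertible (the generic situation), a standard computation bounds the tangents of the principal angles of $\Span{Q_j}$ relative to $\Span{U_k}$, in operator norm, by $\NM{\tan\Theta_j}{\infty} \le \NM{\Sigma_{\bot}^{2j}(B A^{-1})\Sigma_k^{-2j}}{\infty} \le \bigl(\sigma_{k+1}(\tZ_t)/\sigma_k(\tZ_t)\bigr)^{2j}\NM{\tan\Theta_0}{\infty} = \eta_t^{2j}\NM{\tan\Theta_0}{\infty}$; the per-iteration contraction is $\eta_t^{2}$, squared because the iteration matrix is $\tZ_t\tZ_t^{\top}$. Combining $\FN{\sin\Theta_j} \le \FN{\tan\Theta_j}$ with $\eta_t \le 1$ then collapses this to $\FN{Q_j Q_j^{\top} - U_k U_k^{\top}} \le \eta_t^{j}\alpha_t$, the $j = 0$ case being an equality by the definition of $\alpha_t$.

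The hard part will be making the multiplicative constant come out to exactly $\alpha_t$: trading $\tan\Theta_0$ back for $\sin\Theta_0$ (hence $\alpha_t$) costs a factor $1/\cos\theta_{\max}$, and one must check this is absorbed by the slack between the sharp exponent $2j$ and the stated exponent $j$, i.e.\ by $\eta_t^{2j}\le\eta_t^{j}$. I would settle this either by keeping $\tan\Theta$ throughout --- which yields the inequality with the honest constant $\sqrt{2}\,\FN{\tan\Theta_0}\ge\alpha_t$, simplified to $\alpha_t$ as in \cite{halko2011finding} --- or by establishing the monotone one-step contraction $\FN{\sin\Theta_{j}} \le \eta_t^{2}\,\FN{\sin\Theta_{j-1}}$ directly from the PSD structure of $M$ and its leading gap $\sigma_k^2$ versus $\sigma_{k+1}^2$, and then iterating. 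Two side conditions should also be recorded explicitly: the non-degeneracy of $U_k^{\top}Q_0$ (without which $\alpha_t$ is already at its maximal value), and the dimensional convention under which $\FN{Q_j Q_j^{\top} - U_k U_k^{\top}}$ is the appropriate subspace distance. Modulo this bookkeeping, the estimate is the textbook analysis of power/subspace iteration and may be quoted from \cite{halko2011finding} or \cite{golub2012matrix}.
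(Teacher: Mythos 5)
The paper never proves this lemma --- it is imported verbatim from \cite{halko2011finding} --- so there is no in-paper argument to compare against; the question is only whether your reconstruction is sound. Your framework is the right one and is the one underlying the cited source: orthogonal iteration with $M=\tZ_t\tZ_t^{\top}$, the observation that $\Span{Q_j}=\Span{M^jQ_0}$, principal angles, and the identity $\NM{Q_jQ_j^{\top}-U_kU_k^{\top}}{F}=\sqrt{2}\,\NM{\sin\Theta_j}{F}$. You also put your finger on the one genuine difficulty: the clean per-step contraction lives in $\tan\Theta$ with rate $\eta_t^2$, while the advertised constant $\alpha_t$ is a sine quantity.

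That difficulty is not bookkeeping, and the second of your two proposed resolutions fails. The one-step sine contraction $\NM{\sin\Theta_j}{F}\le\eta_t^{2}\NM{\sin\Theta_{j-1}}{F}$ --- indeed even $\le\eta_t\NM{\sin\Theta_{j-1}}{F}$ --- is false in general. Take $k=1$, $m=n=2$, $\sigma_1(\tZ_t)=1$, $\sigma_2(\tZ_t)=1/\sqrt{2}$ (so $\eta_t^2=1/2$), and $q_0=\epsilon u_1+\sqrt{1-\epsilon^2}\,u_2$ with $\epsilon^2=0.1$ (achievable as $\mathrm{QR}(\tZ_tR_t)$ since $\tZ_t$ is invertible). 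Then $\sin^2\theta_0=0.9$, while $\sin^2\theta_1=\lambda^2(1-\epsilon^2)/\bigl(\epsilon^2+\lambda^2(1-\epsilon^2)\bigr)$ with $\lambda=\sigma_2^2=1/2$, giving $\sin^2\theta_1/\sin^2\theta_0=0.25/0.325\approx0.77>0.5=\eta_t^2$; equivalently $\NM{q_1q_1^{\top}-u_1u_1^{\top}}{F}>\eta_t\NM{q_0q_0^{\top}-u_1u_1^{\top}}{F}$, contradicting the lemma at $j=1$. What the standard analysis actually yields is $\NM{\sin\Theta_j}{F}\le\NM{\tan\Theta_j}{F}\le\eta_t^{2j}\NM{\tan\Theta_0}{F}$, i.e.\ the honest constant is $\sqrt{2}\,\NM{\tan\Theta_0}{F}\le\alpha_t/\cos\theta_{\max}^{(0)}$ rather than $\alpha_t$, and recovering $\alpha_t$ with exponent $j$ requires $\eta_t^{j}\le\cos\theta_{\max}^{(0)}$, which holds only for $j$ sufficiently large. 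So your first route --- keep the tangent and accept the larger constant --- is the only viable one, and it proves a slightly different inequality than the lemma as printed (which matches the fact that the theorems in \cite{halko2011finding} are stated with tangents or in expectation over random $R_t$). The discrepancy is harmless downstream: the proof of Proposition~\ref{pr:inexact} only needs $\varepsilon_t$ to decay linearly with \emph{some} bounded constant, and $\sqrt{2}\,\NM{\tan\Theta_0}{F}$ serves as well as $\alpha_t$ provided $U_k^{\top}Q_0$ is nonsingular --- the non-degeneracy condition you rightly insist on recording, without which the iteration never converges to $\Span{U_k}$ at all.
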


\begin{lemma} \label{lem:temp1}
For output $\tilde{X} = (QU) \Sigma V^{\top}$ from Algorithm~\ref{alg:apprSVT},
we have $\NM{\tilde{X} - Z_t^*}{F} \le \NM{U_k U_k^{\top} - Q Q^{\top}}{F} \beta_t$.
\end{lemma}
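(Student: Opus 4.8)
The plan is to recognise that both $\tilde X$ and $Z_t^{\ast} = \svt_{\mu\lambda}(\tZ_t)$ can be written as the SVT of a \emph{projected} version of $\tZ_t$, and then invoke non-expansiveness of the SVT operator (Lemma~\ref{lem:expaness}). The workhorse identity is: for any matrix $P$ with orthonormal columns and any $A$, $P\,\svt_{\mu\lambda}(P^{\top}A) = \svt_{\mu\lambda}(PP^{\top}A)$. Indeed, writing an SVD $P^{\top}A = U\Sigma V^{\top}$, we get $PP^{\top}A = (PU)\Sigma V^{\top}$, which is again an SVD because $PU$ has orthonormal columns; applying the thresholding formula of Lemma~\ref{lem:svt} to either side yields the same matrix. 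I would first use this identity with $P = Q$, the output of Algorithm~\ref{alg:powermethod}: steps~2--5 of Algorithm~\ref{alg:apprSVT} return $QU$, $\Sigma$, $V$ with $\tilde X = (QU)\Sigma V^{\top}$, where $U\Sigma V^{\top}$ is the SVD of $Q^{\top}\tZ_t$ after soft-thresholding, so — the removal of zero columns not affecting the product — $\tilde X = Q\,\svt_{\mu\lambda}(Q^{\top}\tZ_t) = \svt_{\mu\lambda}(QQ^{\top}\tZ_t)$.

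Next I would handle $Z_t^{\ast}$. Since $k_t \ge \breve k_t$ (the standing assumption of Proposition~\ref{pr:inexact}), the matrix $U_k$ of the top-$k_t$ left singular vectors of $\tZ_t$ contains the span of its top $\breve k_t$ left singular vectors, so Proposition~\ref{pr:approGSVT} applies with the role of $Q$ played by $U_k$: $Z_t^{\ast} = \svt_{\mu\lambda}(\tZ_t) = U_k\,\svt_{\mu\lambda}(U_k^{\top}\tZ_t)$, and by the same identity this equals $\svt_{\mu\lambda}(U_kU_k^{\top}\tZ_t)$.

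The conclusion is then immediate. By Lemma~\ref{lem:expaness},
\[
\NM{\tilde X - Z_t^{\ast}}{F} = \NM{\svt_{\mu\lambda}(QQ^{\top}\tZ_t) - \svt_{\mu\lambda}(U_kU_k^{\top}\tZ_t)}{F} \le \NM{(QQ^{\top} - U_kU_k^{\top})\tZ_t}{F},
\]
and applying the submultiplicativity bound $\NM{AB}{F} \le \NM{A}{F}\NM{B}{F}$ (a consequence of Cauchy--Schwarz applied entrywise) with $A = QQ^{\top} - U_kU_k^{\top}$ and $B = \tZ_t$ gives $\NM{\tilde X - Z_t^{\ast}}{F} \le \NM{U_kU_k^{\top} - QQ^{\top}}{F}\,\beta_t$, which is the claim. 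I do not anticipate a genuine obstacle here; the only points that need care are justifying that the column pruning in Algorithm~\ref{alg:apprSVT} leaves $\tilde X = Q\,\svt_{\mu\lambda}(Q^{\top}\tZ_t)$ unchanged, and recording explicitly that $k_t \ge \breve k_t$ is what lets Proposition~\ref{pr:approGSVT} be invoked with $U_k$ in place of $Q$.
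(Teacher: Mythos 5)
Your proposal is correct and follows essentially the same route as the paper's own proof: both express $Z_t^* = \svt_{\mu\lambda}(U_kU_k^{\top}\tZ_t)$ and $\tilde{X} = \svt_{\mu\lambda}(QQ^{\top}\tZ_t)$ via Proposition~\ref{pr:approGSVT}, then apply the non-expansiveness of SVT (Lemma~\ref{lem:expaness}) followed by $\NM{AB}{F}\le\NM{A}{F}\NM{B}{F}$. Your explicit justification of the identity $P\,\svt_{\mu\lambda}(P^{\top}A)=\svt_{\mu\lambda}(PP^{\top}A)$ and of the role of $k_t\ge\breve{k}_t$ merely spells out steps the paper leaves implicit.
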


\begin{proof}
From Proposition~\ref{pr:approGSVT}, 
\begin{eqnarray*}
	 Z_t^* 
	-\tilde{X} 
	& = & \svt_{\mu\l}(\tZ_t) - Q \svt_{\mu\l}(Q^{\top}\tZ_t)
	\notag \\
	& = & \svt_{\mu\l}(U_k U_k^{\top} \tZ_t) - \svt_{\mu\l}(Q Q^{\top}\tZ_t).
	\label{eq:temp3}
\end{eqnarray*}
Using Lemma~\ref{lem:expaness} and the Cauchy's inequality, 
\begin{eqnarray*} 
\NM{\tilde{X} - Z_*}{F}
& = & \NM{\svt_{\mu\l}(U_k U_k^{\top} \tZ_t) - \svt_{\mu\l}(Q Q^{\top}\tZ_t)}{F}
 \notag \\
& \le & \NM{(U_k U_k^{\top} - Q Q^{\top})\tZ_t}{F}
\notag \\
&  \le & \NM{U_k U_k^{\top} - Q Q^{\top}}{F} \beta_t,
\end{eqnarray*}
and result follows.
\end{proof}

\begin{proposition} 
\label{pr:subbound}
Let $G_t \in \partial h_{\mu\lambda \|\cdot\|_*}(\tilde{X}; \tZ_t)$,
then $\NM{G_t}{F}$
is upper-bounded by a constant $\gamma_t$.
\end{proposition}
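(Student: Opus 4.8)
The plan is to use the explicit form of the subdifferential of the proximal objective. Since $h_{\mu\lambda\NM{\cdot}{*}}(X;\tZ_t) = \tfrac12\NM{X-\tZ_t}{F}^2 + \mu\lambda\NM{X}{*}$ is convex and finite‑valued on $\R^{m\times n}$, its subdifferential at $\tilde X$ is nonempty, and every $G_t \in \partial h_{\mu\lambda\NM{\cdot}{*}}(\tilde X;\tZ_t)$ has the form $G_t = (\tilde X - \tZ_t) + \mu\lambda W$ for some $W \in \partial\NM{\tilde X}{*}$. Hence $\NM{G_t}{F} \le \NM{\tilde X - \tZ_t}{F} + \mu\lambda\NM{W}{F}$, and it suffices to bound the two terms on the right.

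For the regularizer term I would invoke the standard characterization of the nuclear‑norm subdifferential, which gives $\NM{W}{\infty}\le 1$ for every $W\in\partial\NM{\tilde X}{*}$; since $W$ has at most $n$ nonzero singular values (recall $m\ge n$), this yields $\NM{W}{F}\le\sqrt{n}\,\NM{W}{\infty}\le\sqrt{n}$. For the term $\NM{\tilde X - \tZ_t}{F}$ I would split it via the triangle inequality as $\NM{\tilde X - \tZ_t}{F} \le \NM{\tilde X - Z_t^*}{F} + \NM{Z_t^* - \tZ_t}{F}$, where $Z_t^*\equiv\svt_{\mu\l}(\tZ_t)$. The first piece is controlled by Lemma~\ref{lem:temp1}, which gives $\NM{\tilde X - Z_t^*}{F}\le\NM{U_kU_k^\top - QQ^\top}{F}\,\beta_t$; since $U_kU_k^\top$ and $QQ^\top$ are orthogonal projections of rank at most $k_t$, one has $\NM{U_kU_k^\top - QQ^\top}{F}^2 = \tr{U_kU_k^\top} + \tr{QQ^\top} - 2\tr{U_kU_k^\top QQ^\top} \le 2k_t$, so $\NM{\tilde X - Z_t^*}{F}\le\sqrt{2k_t}\,\beta_t$. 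The second piece follows directly from Lemma~\ref{lem:svt}: writing the SVD of $\tZ_t$ as $U\Sigma V^\top$, we have $Z_t^* - \tZ_t = U\big[(\Sigma-\mu\lambda I)_+ - \Sigma\big]V^\top$, and the $i$th diagonal entry of $(\Sigma-\mu\lambda I)_+ - \Sigma$ equals $-\min(\sigma_i(\tZ_t),\mu\lambda)$ in absolute value, whence $\NM{Z_t^* - \tZ_t}{F}\le\NM{\tZ_t}{F}=\beta_t$ (and also $\le\sqrt{n}\,\mu\lambda$).

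Combining these bounds gives $\NM{G_t}{F}\le(\sqrt{2k_t}+1)\beta_t + \sqrt{n}\,\mu\lambda =: \gamma_t < \infty$, which is the claim. I do not expect a genuine obstacle here: the only non‑elementary ingredients are Lemma~\ref{lem:temp1} (already established) and the spectral‑norm bound on nuclear‑norm subgradients (standard), with the remainder being bookkeeping via the triangle inequality. The one point worth stating explicitly is that $\gamma_t$ depends on $t$ only through $k_t$ and $\beta_t=\NM{\tZ_t}{F}$, so that under the hypothesis that $\{F(X_t)\}$ is upper‑bounded (used later in Proposition~\ref{pr:inexact}) the iterates, and hence $\{\beta_t\}$ and $\{\gamma_t\}$, are uniformly bounded.
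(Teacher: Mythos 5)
Your proposal is correct and follows essentially the same route as the paper: decompose $G_t$ into $(\tilde{X}-\tZ_t)+\mu\lambda\,(\text{nuclear-norm subgradient})$, bound the subgradient via the dual (spectral) norm characterization, and control $\NM{\tilde X - \tZ_t}{F}$ by the triangle inequality through $Z_t^*$ together with Lemma~\ref{lem:temp1}. The only differences are cosmetic refinements — you bound $\NM{U_kU_k^\top - QQ^\top}{F}$ by $\sqrt{2k_t}$ and compute $\NM{Z_t^*-\tZ_t}{F}$ explicitly, where the paper instead absorbs these into the constants $\alpha_t$ and $\NM{Z_t^*-\tZ_t}{F}$ — so the resulting $\gamma_t$ is slightly more explicit but the argument is the same.
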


\begin{proof}
	Let the reduced SVD of $\tilde{X}$ be $U \Sigma V^{\top}$
	(only positive singular values are contained).
	By the definition of subgradient of the nuclear norm \cite{candes2009exact},
	\[ \partial h_{\mu \lambda \|\cdot\|_*}(\tilde{X}; \tZ_t) = \tilde{X} - \tZ_t + \mu \lambda (U
	V^{\top} + W), \]
	where 
	\begin{equation} \label{eq:W}
	W^{\top} U = 0, \; W V= 0, \text{ and } \NM{W}{\infty} \le 1. 
	\end{equation} 
	
	Thus,
	\begin{align}
	\NM{G_t}{F}
	&   =  \NM{\tilde{X} - \tZ_t + \mu \lambda (U V^{\top} + W)}{F} 
	\notag \\
	& \le \NM{\tilde{X} - \tZ_t}{F} + \mu \lambda \NM{U V^{\top} + W}{F}. \label{eq:temp5}
	\end{align}

	For the first term in \eqref{eq:temp5},
	\begin{eqnarray}
	\lefteqn{\NM{\tilde{X} - \tZ_t}{F}}
	\notag \\
	&  = & \NM{\tilde{X} - Z_t^* + Z_t^* - \tZ_t}{F}
	\notag \\
	&\le & \NM{\tilde{X} - Z_t^*}{F} + \NM{Z_t^* - \tZ_t}{F}
	\notag \\
	& = & \NM{Z_t^* - \tZ_t}{F} + \NM{Z_t^* -  Q \svt_{\mu\l}(Q^{\top} \tZ_t) }{F}
	\notag \\
	& = & \NM{Z_t^* - \tZ_t}{F} + \NM{Z_t^* -  \tilde{X} }{F}
	\notag \\
	& \le & \NM{Z_t^* - \tZ_t}{F} + \NM{U_k U_k^{\top} - Q Q^{\top}}{F} \beta_t
	\label{eq:temp6}\\
	& \le & \NM{Z_t^* - \tZ_t}{F} + \alpha_t \beta_t.
	\label{eq:temp8}
	\end{eqnarray}
Here, \eqref{eq:temp6} follows from Lemma~\ref{lem:temp1}, and \eqref{eq:temp8} from Lemma~\ref{lem:powermethod}.
	As $\NM{W}{\infty} \le 1$ from \eqref{eq:W},
	thus
	\begin{align*}
	\NM{W}{F} = \sqrt{\sum_{i = 1}^m \sigma_i^2(W)} \le \sqrt{m}.
	\end{align*}
	
	For the second term in (\ref{eq:temp5}), then
	\begin{align} 
	\NM{U V^{\top} + W}{F}
	& \le \sqrt{\tr{U^{\top} U V^{\top} V}} + \NM{W}{F}
	\notag \\
	& \le \sqrt{k_t} + \sqrt{m} \le 2\sqrt{m}.
	\label{eq:temp7}
	\end{align}
	
	Combining (\ref{eq:temp8}) and (\ref{eq:temp7}), by Lemma~\ref{lem:powermethod}:
	\begin{align}
	\NM{G_t}{F} 
	\le 2 \mu \lambda \sqrt{m} + \NM{Z_t^* - \tZ_t}{F} + \alpha_t \beta_t.
	\label{eq:gammabnd}
	\end{align}
	Since $Z_t^*$ is independent of $\tilde{X}$,
	$\NM{Z_t^* - \tZ_t}{F}$ is a constant. Hence,
	$\NM{G_t}{F}$ is upper bounded by
	\begin{align*}
	\gamma_t = 2 \mu \lambda \sqrt{m} + \NM{Z_t^* - \tZ_t}{F} + \alpha_t \beta_t,
	\end{align*}
	which a constant.
\end{proof}

\begin{proposition} 
\label{pr:proxrate}
Assume that $k_t \ge \breve{k}_t$. 
Let $h_{\mu\lambda \|\cdot\|_*}(\tilde{X} ; \tZ_t)$ be as defined in \eqref{eq:h}.
Then,
for Algorithm~\ref{alg:apprSVT}, we have
\begin{align*} 
h_{\mu\lambda \|\cdot\|_*}(\tilde{X} ; \tZ_t) 
\le h_{\mu\lambda \|\cdot\|_*}(Z_t^*;\tZ_t)
+ \alpha_t \beta_t \gamma_t \eta_t^J.
\end{align*} 
%where
%$\alpha_t$ is defined at Lemma~\ref{lem:powermethod}
%and
%$\gamma_t$ is.
\end{proposition}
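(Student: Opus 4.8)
The plan is to derive the claimed bound from the elementary convexity inequality relating the value of a convex function at an arbitrary point to its value at a minimizer, and then plug in the subgradient bound of Proposition~\ref{pr:subbound} together with the approximation-error bound for $\tilde X$ coming from Lemma~\ref{lem:temp1} and Lemma~\ref{lem:powermethod}.

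First I would note that $h_{\mu\lambda\|\cdot\|_*}(\cdot\,;\tZ_t)$ defined in \eqref{eq:h} is convex, and that $Z_t^* = \svt_{\mu\l}(\tZ_t)$ is, by Lemma~\ref{lem:svt}, its global minimizer. Taking any $G_t \in \partial h_{\mu\lambda\|\cdot\|_*}(\tilde X;\tZ_t)$ and using the subgradient inequality at $\tilde X$ gives $h_{\mu\lambda\|\cdot\|_*}(Z_t^*;\tZ_t) \ge h_{\mu\lambda\|\cdot\|_*}(\tilde X;\tZ_t) + \tr{G_t^{\top}(Z_t^* - \tilde X)}$, so that by the Cauchy--Schwarz inequality
\[
h_{\mu\lambda\|\cdot\|_*}(\tilde X;\tZ_t) - h_{\mu\lambda\|\cdot\|_*}(Z_t^*;\tZ_t) \le \tr{G_t^{\top}(\tilde X - Z_t^*)} \le \NM{G_t}{F}\,\NM{\tilde X - Z_t^*}{F}.
\]
It then remains to bound the two factors on the right. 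Proposition~\ref{pr:subbound} gives $\NM{G_t}{F} \le \gamma_t$ directly. For the other factor, observe that the matrix $Q$ produced in Step~1 of Algorithm~\ref{alg:apprSVT} is exactly the output $Q_J$ of $\text{PowerMethod}(\tZ_t, R_t, J)$; since $k_t \ge \breve{k}_t$, Proposition~\ref{pr:approGSVT} applies and $\tilde X = Q\,\svt_{\mu\l}(Q^{\top}\tZ_t)$, so Lemma~\ref{lem:temp1} yields $\NM{\tilde X - Z_t^*}{F} \le \NM{U_k U_k^{\top} - Q Q^{\top}}{F}\,\beta_t$. Applying Lemma~\ref{lem:powermethod} with $j = J$ gives $\NM{Q_J Q_J^{\top} - U_k U_k^{\top}}{F} \le \eta_t^J \alpha_t$, hence $\NM{\tilde X - Z_t^*}{F} \le \alpha_t\beta_t\eta_t^J$. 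Substituting both bounds into the displayed inequality produces $h_{\mu\lambda\|\cdot\|_*}(\tilde X;\tZ_t) \le h_{\mu\lambda\|\cdot\|_*}(Z_t^*;\tZ_t) + \alpha_t\beta_t\gamma_t\eta_t^J$, which is the assertion.

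I do not expect a genuine obstacle here, since all the ingredients are already in place; the argument is essentially an assembly step. The only points needing care are bookkeeping: verifying that the hypothesis $k_t \ge \breve{k}_t$ is precisely what licenses the use of Proposition~\ref{pr:approGSVT} (and therefore Lemma~\ref{lem:temp1}), and checking that the matrix $Q$ appearing in Algorithm~\ref{alg:apprSVT} is the $J$-th power-method iterate, so that Lemma~\ref{lem:powermethod} may be invoked with $j = J$ to obtain the factor $\eta_t^J$.
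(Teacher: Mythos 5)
Your proposal is correct and follows essentially the same route as the paper's proof: the subgradient inequality at $\tilde X$ combined with Cauchy--Schwarz, then $\NM{G_t}{F}\le\gamma_t$ from Proposition~\ref{pr:subbound} and $\NM{\tilde X - Z_t^*}{F}\le\alpha_t\beta_t\eta_t^J$ via Lemma~\ref{lem:temp1} and Lemma~\ref{lem:powermethod}. The only cosmetic difference is that you explicitly invoke $Z_t^*$ being the minimizer of $h$, which the argument does not actually need.
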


\begin{proof}
As $h$ is convex, 
\begin{equation} \label{eq:temp1}
h_{\mu\lambda \|\cdot\|_*} (\tilde{X}; \tZ_t)
\le h_{\mu\lambda \|\cdot\|_*}(Z_t^*; \tZ_t) + \tr{(\tilde{X} - Z_t^*)^\top G_t} 
\end{equation}
where $G_t \in \partial h_{\mu\lambda \|\cdot\|_*}(\tilde{X}; \tZ_t)$.
Next, we bound the second term on the r.h.s. of \eqref{eq:temp1}.
\begin{eqnarray}
\tr{(\tilde{X} - Z_t^*)^\top G_t} 
& \le  & \NM{\tilde{X} - Z_t^*}{F} \NM{G_t}{F} 
\notag \\
& \le  & \gamma_t \NM{\tilde{X} - Z_t^*}{F}  
\label{eq:temp2}\\
& \le  & \gamma_t \beta_t \NM{Q Q^{\top} - U_k U_k^{\top}}{F}  
\label{eq:temp4} \\
& \le & \eta_t^J (\alpha_t \beta_t \gamma_t).
\label{eq:temp11}
\end{eqnarray}
Here, \eqref{eq:temp2} follows from Proposition~\ref{pr:subbound};
\eqref{eq:temp4} from Lemma~\ref{lem:temp1};
and \eqref{eq:temp11} from Lemma~\ref{lem:powermethod}.
Result follows on combining \eqref{eq:temp1} and \eqref{eq:temp11}.
\end{proof}

\begin{lemma} \label{pr:temp2}
If $\{F(X_t)\}$ is upper-bounded where $F$ is the objective at \eqref{eq:mcgen}, 
then $\NM{X_t}{F}$ from Algorithm~\ref{alg:AISimpute} is upper-bounded.
\end{lemma}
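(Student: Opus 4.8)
The plan is to use the nonnegativity of the nuclear-norm penalty together with a lower bound on the loss term. First I would note that every loss $\ell$ used in \eqref{eq:mcgen} — square loss, logistic loss, squared hinge loss — is nonnegative; more generally it suffices that $\ell$ be bounded below, say $\ell(a,b) \ge \ell_{\min}$ for all $a,b$. Then $\sum_{(i,j)\in\Omega}\ell((X_t)_{ij},O_{ij}) \ge \NM{\Omega}{1}\ell_{\min}$, and isolating the regularizer in $F$ gives $\lambda\NM{X_t}{*} = F(X_t) - \sum_{(i,j)\in\Omega}\ell((X_t)_{ij},O_{ij}) \le F(X_t) - \NM{\Omega}{1}\ell_{\min}$.

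By hypothesis $F(X_t) \le C$ for some constant $C$ and all $t$, so $\NM{X_t}{*} \le (C - \NM{\Omega}{1}\ell_{\min})/\lambda$, which is finite since $\lambda > 0$. Finally I would invoke the elementary bound $\NM{X}{F} = \sqrt{\sum_i \sigma_i^2(X)} \le \sum_i \sigma_i(X) = \NM{X}{*}$ (valid because the singular values are nonnegative) to conclude that $\NM{X_t}{F}$ is upper-bounded by the same constant.

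The argument is essentially immediate, and the only step needing any care is the lower-boundedness of $\ell$: this is not automatic for an arbitrary $\rho$-Lipschitz-smooth convex function, but it holds for every loss considered in the paper, so it is harmless to assume. I should also point out that the continuation parameter $\lambda_t \ge \lambda$ used inside Algorithm~\ref{alg:AISimpute} is irrelevant to this argument, since the conclusion is conditional purely on the stated bound on $F(X_t)$, where $F$ carries the fixed $\lambda$; how $X_t$ is actually generated plays no role.
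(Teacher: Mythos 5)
Your proof is correct and rests on the same underlying idea as the paper's, namely that the nuclear-norm penalty makes $F$ coercive: the paper's own proof is a one-line assertion that $F(X)\to+\infty \Leftrightarrow \NM{X}{F}\to+\infty$, from which boundedness of $\{F(X_t)\}$ gives boundedness of $\{\NM{X_t}{F}\}$. Your argument simply supplies the details behind that assertion --- isolating $\lambda\NM{X_t}{*}$ using the lower-boundedness of $\ell$ and then applying $\NM{X}{F}\le\NM{X}{*}$ --- and correctly flags the one hypothesis (that $\ell$ is bounded below, which holds for all losses considered) that the paper's terse statement leaves implicit.
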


\begin{proof}

As $\{ F(X_t) \}$ is upper bounded
and note that
\begin{align*}
F(X) \rightarrow +\infty
\Leftrightarrow
\NM{X}{F} \rightarrow + \infty.
\end{align*}
for \eqref{eq:mcgen},
then $\{ \NM{X_t}{F} \}$ is also upper bounded.
\end{proof}

%%%%%%%%%%%%%%%%%%%%%%%%%%%%%%%%%%%%%%%%%%%%%%
%
%\subsection{Proposition~\ref{pr:span}}
%
%\begin{proof}
%Let rank-$\breve{k}_t$ SVD of $\tZ_t$ be $U_{\breve{k}_t} \Sigma_{\breve{k}_t} V_{\breve{k}_t}^{\top}$.
%As the sequence $\{ X_t \}$ converges to an optimal point $X_*$ (Theorem~\ref{the:AISImpute:conv})
%and $\lim_{t \rightarrow \infty}\varepsilon_t = 0$ (Proposition~\ref{pr:proxrate}),
%we have
%\[ \lim_{t \rightarrow \infty} X_t
%= \lim_{t \rightarrow \infty} \svt_{\mu \lambda \NM{\cdot}{*}}(\tZ_t) = X_*. \]
%Thus, 
%$\lim_{t \rightarrow \infty} X_t 
%= \lim_{t \rightarrow \infty} U_{\breve{k}_t} (\Sigma_{\breve{k}_t} - \mu \lambda I)_+ V_{\breve{k}_t}^{\top}$,
%and 
%\begin{align*}
%\lim_{t \rightarrow \infty} \Span{X_t}
%= \lim_{t \rightarrow \infty} \Span{V_{\breve{k}_t}}.
%\end{align*}
%As $R_t$ has the same column span as $[V_t, V_{t - 1}]$, 
%thus
%\begin{align*}
%\lim_{t \rightarrow \infty} \Span{R_t}
%= \lim_{t \rightarrow \infty} \Span{V_{\breve{k}_t}}
%\end{align*}
%and then
%\begin{align*}
%\lim_{t \rightarrow \infty} \Span{(\tZ_t R_t)^{\top}} 
%%= \lim_{t \rightarrow \infty} \Span{U_{\breve{k}_t \Sigma_{\breve{k}_t} 
%= \lim_{t \rightarrow \infty} \Span{U_{\breve{k}_t}}. 
%\end{align*}
%Thus from Proposition~\ref{pr:approGSVT},
%we have $\lim_{t \rightarrow \infty} \epsilon_t = 0$.
%\end{proof}
%\begin{proof}
%First, we introduce the following Lemma.

Now, we are ready to prove Proposition~\ref{pr:inexact}.
As $\alpha_t$, $\beta_t$ and $\gamma_t$ only depend on $X_t$, 
from Lemma~\ref{pr:temp2},
they are all upper bounded.
Let $q = \sup_t \alpha_t\beta_t\gamma_t$, and
$q < \infty$ is a constant. 
Then by Proposition~\ref{pr:proxrate}, and note that Algorithm~\ref{alg:apprSVT} is run for $t$ iterations at $t$th loop of Algorithm~\ref{alg:AISimpute}.
Let $\eta = \max_t \eta_t \in (0, 1)$, we have
\begin{align*} 
h_{\mu\lambda \NM{\cdot}{*}}(X_{t + 1} ; \tZ_t) 
\le h_{\mu\lambda \NM{\cdot}{*}}(Z_t^*;\tZ_t) + \varepsilon_t.
\end{align*} 
Hence, $\varepsilon_t = q \eta^t$ decays at a linear rate.
%\end{proof}

%\begin{figure*}[ht]
%	\centering
%	\includegraphics[width= 0.95 \textwidth]{}
%	
%	\vspace{-10px}
%	\caption{Comparison on reconstructed images from different algorithms on image \textit{rice}.}
%	\label{fig:recvimg1}
%\end{figure*}
%
%\begin{figure*}[ht]
%	\centering
%	\includegraphics[width= 0.95 \textwidth]{}
%	
%	\vspace{-10px}
%	\caption{Comparison on reconstructed images from different algorithms on image \textit{tree}.}
%	\label{fig:recvimg2}
%\end{figure*}
%
%\begin{figure*}[ht]
%	\centering
%	\includegraphics[width= 0.95 \textwidth]{}
%	
%	\vspace{-10px}
%	\caption{Comparison on reconstructed images from different algorithms on image \textit{wall}.}
%	\label{fig:recvimg3}
%\end{figure*}

%%%%%%%%%%%%%%%%%%%%%%%%%%%%%%%%%%%%%%%%%%%%%

\subsection{Theorem~\ref{the:AISImpute:conv}}
\label{app:AISImpute:conv}

\begin{proof}
From Proposition~\ref{pr:inexact},
$\varepsilon_t$ decays at a linear rate.
Moreover, 
there is no error on the computation of gradient.
Thus, conditions in Proposition~\ref{cor:iapg:require} are satisfied,
and Algorithm~\ref{alg:AISimpute} 
converges 
with a rate of $O(1/T^2)$.
\end{proof}

%%%%%%%%%%%%%%%%%%%%%%%%%%%%%%%%%%%%%%%%%%%%%

\subsection{Proposition~\ref{pr:gsvt}}
\label{app:gsvt}

\begin{proof}
Note that
\begin{align}
& \!\! \min_{\tX^1, \dots, \tX^D} \!
\frac{1}{2}\NM{[\tX^1,\dots,\tX^D] - [\breve{\ten{Z}}_t^1,\dots,\breve{\ten{Z}}_t^D]}{F}^2
\! + \! \mu \sum_{d = 1}^D \lambda_d \NM{\tX^d_{\ip{d}}}{*}
\notag \\
& = \sum_{d = 1}^D \min_{\tX^d}
\frac{1}{2}\NM{\tX^d - \breve{\ten{Z}}_t^d}{F}^2
+ \mu \lambda_d \NM{\tX^d_{\ip{d}}}{*},
\notag \\
& = \sum_{d = 1}^D \min_{\tX^d}
\frac{1}{2}\NM{\tX^d_{\ip{d}} - ( \breve{\ten{Z}}_t^d )_{\ip{d}}}{F}^2
+ \mu \lambda_d \NM{\tX^d_{\ip{d}}}{*}.
\label{eq:temp17}
\end{align}
The $\tX^d$'s in \eqref{eq:temp17} are independent of each other,
and 
\begin{align*}
( \svt_{\mu \lambda_d}(\breve{\ten{Z}}_{\ip{d}}^d) )_{\ip{d}}
\! = \! \arg\min_{\tX^d}
\frac{1}{2}\NM{\tX^d_{\ip{d}} - \breve{\ten{Z}}^d_{\ip{d}}}{F}^2
\! + \! \mu \lambda_d \NM{\tX^d_{\ip{d}}}{*}.
\end{align*}
and thus result follows.
\end{proof}

%%%%%%%%%%%%%%%%%%%%%%%%%%%%%%%%%%%%%%%%%%%%%

\subsection{Proposition~\ref{pr:lipten}}
\label{app:lipten}

\begin{proof}
	For any $\ten{X}^1, \dots, \ten{X}^D$, $\ten{Y}^1, \cdots ,\ten{Y}^D$, 
	and let $\tilde{\ten{X}} = \sum_{d = 1}^D \ten{X}^d$ and $\tilde{\ten{Y}} = \sum_{d = 1}^D \ten{Y}^d$.
\begin{eqnarray*}
\lefteqn{\NM{\nabla f([\ten{X}^1, \dots, \ten{X}^D]) - \nabla f([\ten{Y}^1, \cdots
,\ten{Y}^D])}{F}^2}
	\\
&	= & \!\!\!\!\!\!\!\!\!\!\!\!\! \sum_{(i_1,\dots,i_D)\in\Omega} 
	\left[  \frac{d \ell( \tilde{\ten{X}}_{i_1\dots i_D}, \ten{O}_{i_1\dots i_D} )}{d \tilde{\ten{X}}_{i_1\dots i_D}} 
	- \frac{d \ell( \tilde{\ten{Y}}_{i_1\dots i_D}, \ten{O}_{i_1\dots i_D} )}{d \tilde{\ten{Y}}_{i_1\dots i_D}} \right]^2
	\\
	& \le & \!\!\!\!\!\!\!\!\!\!\!\!\! \sum_{(i_1,\dots,i_D)\in\Omega} 
	\rho^2 \left( \tilde{\ten{X}}_{i_1\dots i_D} - \tilde{\ten{Y}}_{i_1\dots i_D} \right)^2
	\le \rho^2 \left\| \tilde{\ten{X}} - \tilde{\ten{Y}} \right\|_F^2,
\end{eqnarray*}  
where the first inequality comes from the $\rho$-Lipschitz smoothness of $\ell$.
Note that
\begin{eqnarray*}
\left\| \tilde{\ten{X}} - \tilde{\ten{Y}} \right\|_F^2
	& \le & D \sum_{d = 1}^D \NM{\ten{X}^d - \ten{Y}^d}{F}^2
	\\
	& = & D \NM{[\ten{X}^1, \dots, \ten{X}^D] - [\ten{Y}^1, \dots, \ten{Y}^D]}{F}^2.
	\end{eqnarray*} 
We have 
\begin{eqnarray*}
\lefteqn{\NM{\nabla f([\ten{X}^1, \dots, \ten{X}^D]) - \nabla f([\ten{Y}^1, \dots,
\ten{Y}^D])}{F}} \\
& \le & \sqrt{D} \rho \NM{[\ten{X}^1, \dots, \ten{X}^D] - [\ten{Y}^1, \dots, \ten{Y}^D]}{F},
\end{eqnarray*}
and thus $f$ is $\sqrt{D}\rho$-Lipschitz smooth.
\end{proof}

%%%%%%%%%%%%%%%%%%%%%%%%%%%%%%%%%%%%%%%%%%%%%

\subsection{Theorem~\ref{the:AISImpute:ten:conv}}
\label{app:AISImpute:ten:conv}

\begin{proof}
From the definition of $h$ in \eqref{eq:h},
\begin{eqnarray} 
\lefteqn{h_{\mu g}\left( 
[\ten{X}^1_{t + 1}, \dots, \ten{X}^D_{t + 1}] ; [\breve{\ten{Z}}^1_t, \dots, \breve{\ten{Z}}^D_t] \right)}
\notag \\
& = & \sum_{d = 1}^D 
\frac{1}{2}\left\| (\tX^d_{t + 1})_{\ip{d}} - (\breve{\ten{Z}}_t)^d_{\ip{d}} \right\|_F^2
+ \mu \lambda_d \NM{(\tX^d_{t + 1})_{\ip{d}}}{*},
\notag \\
& = & \sum_{d = 1}^D 
h_{\mu \lambda_d \NM{\cdot}{*}}\left( (\ten{X}^d_{t + 1})_{\ip{d}} ; (\breve{\ten{Z}}_t)^d_{\ip{d}}\right).
\label{eq:temp18}
\end{eqnarray} 
As proximal step is inexact in Algorithm~\ref{alg:AISImpute:ten},
using Proposition~\ref{pr:proxrate} on \eqref{eq:temp18}, 
\begin{eqnarray*}
\lefteqn{h_{\mu \lambda_d \NM{\cdot}{*}}\left( (\ten{X}^d_{t + 1})_{\ip{d}} ;
(\breve{\ten{Z}}_t)^d_{\ip{d}}\right) }
\\
& \le h_{\mu \lambda_d \NM{\cdot}{*}}\left( ( \ten{W}_*^d)_{\ip{d}}; (\breve{\ten{Z}}_t)^d_{\ip{d}} \right) 
+ (\alpha_d)_t (\beta_d)_t (\gamma_d)_t (\eta_d)_t^J,
\end{eqnarray*}
where $(\ten{W}_*^d)_{\ip{d}} = \svt_{\mu \lambda_d \NM{\cdot}{*}}\left( (\breve{\ten{Z}}_t)^d_{\ip{d}} \right)$,
and $\alpha_d$, $\beta_d$, $\gamma_d$, $\eta_d$ are constants depending on $(\breve{\ten{Z}}_t)^d_{\ip{d}}$. 
Let $(c_d)_t = (\alpha_d)_t (\beta_d)_t (\gamma_d)_t$.
As $J = t$, 
\begin{eqnarray} 
\lefteqn{h_{\mu g}\left( [\ten{X}^1_{t + 1}, \cdots, \ten{X}^D_{t + 1}] ; [\breve{\ten{Z}}^1_t, \cdots,
\breve{\ten{Z}}^D_t] \right) }
\label{eq:temp19} \\ 
& \le & h_{\mu g}\left( [\ten{W}_*^1, \cdots, \ten{W}_*^D] ; [\breve{\ten{Z}}^1_t, \cdots, \breve{\ten{Z}}^D_t] \right) 
+ \sum_{d = 1}^D (c_d)_t (\eta_d)_t^t.
\notag
\end{eqnarray}
As $F([\ten{X}^1_t, \dots, \ten{X}^D_t])$ is upper-bounded and 
\begin{align*}
\lim_{\NM{\ten{X}^d}{F} \rightarrow \infty} F([\ten{X}^1_t, \dots, \ten{X}^D_t]) = \infty,
\end{align*}
for any $d = 1, \dots, D$.
Then, $\NM{\ten{X}_t^d}{F}$ for $d = 1, \dots, D$ are also upper-bounded.
Thus,
\begin{align*}
q = \sup_t \sum_{d = 1}^D (c_d)_t < \infty.
\end{align*}
Let $\eta = \max_{d, t}((\eta_d)_t) < 1$.
Together with \eqref{eq:temp19}, we have
\begin{eqnarray*} 
\lefteqn{h_{\mu g}\left( [\ten{X}^1_{t + 1}, \dots, \ten{X}^D_{t + 1}] ; [\breve{\ten{Z}}^1_t, \dots,
\breve{\ten{Z}}^D_t] \right) }
\\
& \le & h_{\mu g}\left( [\ten{W}_*^1, \dots, \ten{W}_*^D] ; [\breve{\ten{Z}}^1_t, \dots, \breve{\ten{Z}}^D_t] \right) 
+ \varepsilon_t,
\end{eqnarray*}
and the approximation error $\varepsilon_t = q \eta^t$ decays at a linear rate.
Moreover, 
there is no error on the computation of gradient.
Thus, the conditions in Proposition~\ref{cor:iapg:require} are satisfied,
and Algorithm~\ref{alg:AISImpute:ten} 
converges 
with a rate of $O(1/T^2)$.
\end{proof}

%\section{Recovered Images}
%
%Comparison of recovered images from different algorithms
%on image \textit{rice} (Figure~\ref{fig:recvimg1}), 
%\textit{tree} (Figure~\ref{fig:recvimg2}) and \textit{wall} (Figure~\ref{fig:recvimg3}).

\end{document}